\theoremstyle{plain}
\newtheorem{thm}{Theorem}[section]
\newtheorem{cor}[thm]{Corollary}
\newtheorem{lem}[thm]{Lemma}
\newtheorem{prop}[thm]{Proposition}
\newtheorem{prob}[thm]{Problem}
\theoremstyle{remark}
\newcommand{\stat}{\mathsf{stat}}
\newcommand{\Stat}{\mathsf{Stat}}
\newcommand{\cyc}{\mathsf{cyc}}
\newcommand{\inv}{\mathsf{inv}}
\newcommand{\wex}{\mathsf{wex}}
\newcommand{\exc}{\mathsf{exc}}
\newcommand{\rlmin}{\mathsf{rlmin}}
\newcommand{\des}{\mathsf{des}}
\newcommand{\fix}{\mathsf{fix}}
\newcommand{\Fix}{\mathsf{Fix}}
\newcommand{\Sfix}{\mathsf{Sfix}}
\newcommand{\sfix}{\mathsf{sfix}}
\newcommand{\Cyc}{\mathsf{Cyc}}
\newcommand{\Scyc}{\mathsf{Scyc}}
\newcommand{\scyc}{\mathsf{scyc}}
\newcommand{\Excl}{\mathsf{Excl}}
\newcommand{\Rlmip}{\mathsf{Rlmip}}
\begin{document}
\title{Statistics of Partial Permutations via Catalan matrices}

\author{Yen-Jen Cheng}
\address{Department of Applied Mathematics, National Yang Ming Chiao Tung University, Hsinchu 30093, Taiwan, ROC}
\email[Yen-Jen Cheng]{yjc7755@gmail.com, yjc7755@nycu.edu.tw }

\author{Sen-Peng Eu}
\address{Department of Mathematics, National Taiwan Normal University, Taipei 11677, Taiwan, ROC}
\email[Sen-Peng Eu]{speu@math.ntnu.edu.tw}

\author{Hsiang-Chun Hsu}
\address{Department of Mathematics, Tamkang University, Taipei 251301, Taiwan, ROC}
\email[Hsiang-Chun Hsu]{hchsu0222@gmail.com}

\subjclass[2010]{05A05, 05A19}

\keywords{permutation, partial permutation, Catalan matrix, statistic, set-valued statistic, fixed point, inversion, descent, cycle, excedance, right-to-left minimum, connected permutation, cycle-up-down permutation}

\thanks{Y.-J. Cheng is partially supported by MOST 110-2811-M-A49-505, 
S.-P. Eu is partial supported by MOST 110-2115-M-003-011-MY3 and H.-C. Hsu is partially supported by MOST 110-2115-M-032-004-MY2}

\date{\today}

\maketitle


\begin{abstract}
A generalized Catalan matrix $(a_{n,k})_{n,k\ge 0}$ is generated by two seed sequences
$\mathbf{s}=(s_0,s_1,\ldots)$ and $\mathbf{t}=(t_1,t_2,\ldots)$ 
together with a recurrence relation. By taking $s_\ell=2\ell+1$ and $t_\ell=\ell^2$ we can interpret $a_{n,k}$ as the number of  partial permutations, which are $n\times n$ $0,1$-matrices of $k$ zero rows with at most one $1$ in each row or column. In this paper we prove that most of fundamental statistics and some set-valued statistics on permutations can also be defined on partial permutations and be encoded in the seed sequences. Results on two interesting permutation families, namely the connected permutations and cycle-up-down permutations, are also given.
\end{abstract}


\section{Introduction}
\subsection{Catalan matrices and Partial permutations}
Given two sequences $\mathbf{s}=(s_0,s_1,\ldots)$ and $\mathbf{t}=(t_1,t_2,\ldots)$ with $t_i\ne 0$ for all $i$, 
we can define an infinite lower triangular matrix $A^{\mathbf{s},\mathbf{t}}=(a_{n,k})_{n,k\geq 0}$ from the recurrence relation
$$
\begin{cases}
a_{0,0} = 1, a_{0,k}=0 \quad (k> 0), \\
a_{n,k} = a_{n-1,k-1}+s_ka_{n-1,k}+t_{k+1}a_{n-1,k+1}\quad(n\geq 1),
\end{cases}
$$
where $a_{n, -1}$ is defined to be zero. 
$A^{\mathbf{s},\mathbf{t}}$ is called the {\it generalized Catalan matrix} associated with the \emph{seed sequences} (or \emph{seeds} for short) $\mathbf{s}$ and $\mathbf{t}$. 
The numbers $a_{n,0}$ in the left-most column are called the {\it generalized Catalan numbers} associated with  $\mathbf{s}$ and $\mathbf{t}$. 
These names are from the fact that if we take $\mathbf{s}=(0,0,0,\ldots)$ and $\mathbf{t}=(1,1,1,\ldots)$, then $a_{2n+1,0}=0$ and $a_{2n,0}=\frac{1}{n+1}{2n\choose n}$ is the $n$-th ordinary Catalan number. In this paper when we refer to Catalan numbers or Catalan matrices we always mean the generalized ones.  Catalan matrices are closely related with orthogonal polynomials, lattice paths and Riordan groups, see~\cite{Aigner_07, Corteel_Kim_Stanton_16, Sadjang_Koepf_Foupouagnigni_15} for more information.

\medskip
Let  $\mathfrak{S}_{n}$ be the set of permutations of length $n$.
A permutation $\pi\in \mathfrak{S}_n$ can be seen as an $n\times n$ permutation matrix, which is a $0,1$-matrix with exactly one 1 in each column and each row.
Define a {\it partial permutation} of order $n$ to be an $n\times n$ $(0,1)$-matrix with at most one 1 in each column and each row. 
Let  $\mathfrak{P}_{n,k}$ be the set of partial permutations of order (length) $n$ with $k$ zero rows (hence $k$ zero columns), 
and $\mathfrak{P}_n=\cup_k \mathfrak{P}_{n,k}$.
It is easy to see that $$|\mathfrak{P}_{n,k}|={n\choose k}^2(n-k)!.$$ 
By definition $\mathfrak{P}_{n,0}$ is exactly the set of the ordinary permutation matrices.
Our study is motivated by the observation that if we take $(s_\ell, t_\ell) = (2\ell+1, \ell^2)$ as the seeds, then 
 in the associated Catalan matrix we have $a_{n,k}= |\mathfrak{P}_{n,k}|$ and $a_{n,0}=n!$.

The notion of partial permutations is a natural extension of ordinary permutations. They have been studied in various settings such as percolation model on $\mathbb{Z}^2$~\cite{Derycke_18}, Erd\"os-Ko-Rado theorem~\cite{Ku_Leader_06}, cross-intersecting families~\cite{Borg_10}, and braid monoid~\cite{East_07}. Note that there is a different (nonequivalent) combinatorial structure bear the same name~\cite{Claesson_11, Ivanov_01} and is also widely studied. Readers should not confuse between these two.

\subsection{Theme of this paper}

The central theme of this paper is that most of the classic statistics over ordinary permutations can be also defined on partial permutations and it is delightful to us that all the information is encoded in the seeds. 

We take Corollary 3.3 as an introductory example. Let 
$$
[n] :=1+q+q^2+\dots +q^{n-1}, \qquad [n]! =[1][2]\dots [n], \qquad {n\brack k} :=\frac{[n]!}{[k]![n-k]!}
$$
be the $q$-analogs of the number $n$, the factorial $n!$ and the binomial coefficient ${n\choose k}$ respectively.
There we will prove that if we set the seeds to be 
$$(s_\ell,t_\ell)=(2[\ell]_q q^{\ell}+q^{2\ell}, [\ell]_q^2 q^{2\ell-1}),$$ then 
$a_{n,k}$ is exactly the generating function of $\mathfrak{P}_{n,k}$ with respect to the inversion statistic $\mathsf{inv}$ (to be defined later).
Note that in~\cite{Derycke_18} another `inversion' statistic is defined but it is different from ours. Moreover, we can have the explicit form
$$a_{n,k}=\sum_{\pi\in \mathfrak{P}_{n,k}}{q^{\inv(\pi)}}={n\brack k}^2[n-k]!.$$
When $k=0$ we recover the inversion enumerator of ordinary permutations.

\medskip
The rest of the paper is organized as follows. Notations and the definitions of statistics will be put in Section 2. In Section 3 we discuss
the joint distribution of inversion, weak excedance and right-to-left minimum. In Section 4 we discuss the relation among descent, weak-excedance and excedance. 
In Section 5 we introduce the concept of extended Catalan matrix which help us
to generalize some ordinary integer-valued statistics to set-valued statistics and investigate the joint distribution of set-valued fixed point, cycle and right-to-left-minimum over partial permutations. In Sections 6 and 7 we discuss two interesting permutation families whose statistics can also be encoded in the seeds, namely the connected partial permutations and cycle-up-down partial permutations. 


\section{Statistics over partial permutations}
\subsection{Partial permutations}
A permutation in $\mathfrak{S}_n$ is isomorphic to an $n\times n$ $(0,1)$-matrix with exactly $1$ in each column and each row. A {\it partial permutation} $\pi$ of order (or length) $n$ is defined to be an $n\times n$ $(0,1)$-matrix with \emph{at most} one 1 in each column and each row and we call this matrix the \emph{matrix notation} of $\pi$ and let $\pi_{i,j}$ be its $(i,j)$-entry. Let $\mathfrak{P}_n$ be the set of partial permutations of order $n$ and 
$\mathfrak{P}_{n,k}$ be the set of partial permutations of order $n$ with $k$ zero rows (hence $k$ zero columns). 
For example,
$$\pi=
\begin{bmatrix}
0 & 0 & 0 & 0 & 0 & 1 & 0 & 0 & 0\\ 
1 & 0 & 0 & 0 & 0 & 0 & 0 & 0 & 0\\ 
0 & 0 & 0 & 0 & 0 & 0 & 0 & 0 & 0\\ 
0 & 0 & 0 & 1 & 0 & 0 & 0 & 0 & 0\\ 
0 & 0 & 0 & 0 & 0 & 0 & 0 & 0 & 1\\ 
0 & 1 & 0 & 0 & 0 & 0 & 0 & 0 & 0\\ 
0 & 0 & 1 & 0 & 0 & 0 & 0 & 0 & 0\\ 
0 & 0 & 0 & 0 & 0 & 0 & 0 & 0 & 0\\ 
0 & 0 & 0 & 0 & 1 & 0 & 0 & 0 & 0\\ 
\end{bmatrix}
$$
is a partial permutation in $\mathfrak{P}_{9,2}$.

The \emph{two-line notation} of a partial permutation $\pi\in \mathfrak{P}_n$ is a $2\times n$ array, in which
the entries of the first row are $1,2,\dots ,n$ and the $i$-th entry of the second row is $j$ if 
the $(i,j)$-entry in its matrix notation is $1$, or $\mathsf{X}$ if the $i$-th row is a zero row. 
By omitting the first row we obtain the \emph{one-line notation} $\pi=\pi_1\pi_2\dots \pi_n$.
For the above $\pi$ its two-line notation and one-line notation are respectively
$$\pi=\left(
\begin{array}{ccccccccc}
1 & 2 & 3 & 4 & 5 & 6 & 7 & 8 & 9 \\
6 & 1 & \mathsf{X} & 4 & 9 & 2 & 3 & \mathsf{X} & 5
\end{array}
\right) \qquad \mbox{and} \qquad \pi=61\mathsf{X}4923\mathsf{X}5.$$

Regard the two line notation as a mapping sending $i$ to $\pi_i$ for $1\le i \le n$.
Choose an $i$ and successively trace the images and pre-images we obtain either a cycle starting from and back to $i$, which we call a \emph{full cycle}, or a sequence stops at $\mathsf{X}$, which we call a \emph{partial cycle} if we write this sequence as a cycle. 
In this way $\pi$ is decomposed into a collection of full and partial cycles, which is called the \emph{cycle notation} of $\pi$. For example,
the above $\pi$ has the cycle notation $$\pi=(216)(73\mathsf{X})(4)(8\mathsf{X})(59).$$ Here $(216),(4),(59)$ are full cycles and $(73\mathsf{X}), (8\mathsf{X})$ are partial cycles. Clearly for any $\pi\in \mathfrak{P}_{n,k}$ there are $k$ partial cycles.


\subsection{Statistics of a partial permutation}
We extend the definitions of classic statistics of a permutation to a partial permutation. 
Let $\pi=\pi_1\pi_2\dots \pi_n$ be a partial permutation. 

\begin{itemize}
\item $\mathsf{fix}$. The \emph{fixed point} statistic is defined by 
$$\mathsf{fix}(\pi):= \text{the number of $i$ such that $\pi_i=i$}.$$

\item $\mathsf{cyc}$. The \emph{cycle} statistic of $\pi$ is defined by
$$\mathsf{cyc}(\pi):= \text{number of full cycles.}$$ 
We will also see
$\mathsf{cyc}_{\ge 2}(\pi)$ later, which is the number of full cycles of length greater than or equal to $2$.
Note that for an ordinary permutation all cycles are full.

\item $\mathsf{inv}$.  For an ordinary permutation $\pi$, the definition of the  \emph{inversion} statistic is 
$$\mathsf{inv}(\pi):=|\{(i,j): i<j \text{ and } \pi_i>\pi_j\}|.$$
We need the following equivalent fact.. If we write the ordinary permutation $\pi$ in the matrix notation and delete all entries to the right and below of each $1$ then $\mathsf{inv}(\pi)$ is the remaining number of $0$. 
For a partial partition $\pi$ we do similarly: for each $1$ we delete all entries to its right and below and define
$$\mathsf{inv}(\pi):= \text{number of remaining $0$ which has a $1$ either in its row or column (or both).}$$ 

\item $\mathsf{exc}$ and $\mathsf{wex}$. Write $\pi$ in the matrix notation. 
We say $i$ is an excedance if $\pi_{ij}=1$ and $i< j$, and a weak excedance if $\pi_{ij}=1$ and $i\le j$. 
Then we define the \emph{excedance} and \emph{weak excedance} statistics by
$$\mathsf{exc}(\pi):= \text{number of excedances},$$ 
$$\mathsf{wex}(\pi):= \text{number of weak excedances}.$$
These definitions clearly inherit those from an ordinary permutation. 

\item $\mathsf{rlmin}$. 
A number $\pi_i=j$ is a right-to-left minimum in a ordinary permutation if in its one-line notation $1,2,\dots, j-1$ are all to the left of $j$. This fact can be restated in terms of the matrix notation and we use this to define a right-to-left minimum of a partial permutation: $\pi_{i,j}=1$ contributes $1$ to right-to-left minimum if $j=1$, or for every column $1\le k< j$ there is a $1$ to the upper left of $\pi_{i,j}$. 
The \emph{right-to-left minimum} statistics is defined by 
$$\mathsf{rlmin}(\pi):=\text{number of right-to-left minimums}.$$

\item  $\mathsf{des}$.  Recall that for a word $\mathsf{c}=c_1\dots c_n$ its descent statistic is 
$$\mathsf{des}(\mathsf{c}):=|\{i: c_i>c_{i+1}\}|.$$ 
For $\pi\in \mathfrak{P}_{n,k}$ let $j_1< j_2 < \dots < j_k$ be the numbers in $[n]$ not appearing in the one-line notation of $\pi$
and $\pi$ will be of the form $\pi=\pi_{(1)}\mathsf{X}\pi_{(2)}\mathsf{X}\dots \mathsf{X}\pi_{(k+1)}$, where each $\pi_{(i)}$ is a (possibly empty) word.  
We define the \emph{descent} statistic of $\pi$ to be
$$\mathsf{des}(\pi) := \mathsf{des}(\pi_{(1)}j_1)+  \mathsf{des}(\pi_{(2)}j_2)+\dots + \mathsf{des}(\pi_{(k)}j_k)+\mathsf{des}(\pi_{(k+1)}),$$
where each $\mathsf{des}(\pi_{(i)}j_i)$ is the descent statistic of the word $\pi_{(i)}j_i$.
\end{itemize}

For our running example $\pi=61\mathsf{X}4923\mathsf{X}5$ we have
$\mathsf{fix(\pi)}=1$, $\mathsf{cyc(\pi)}=3$, $\mathsf{inv(\pi)}=18$, $\mathsf{exc(\pi)}=2$, $\mathsf{wex(\pi)}=3$, $\mathsf{rlmin(\pi)}=4$ and
 $$\mathsf{des(\pi)}=\mathsf{des(617)}+\mathsf{des(49238)}+\mathsf{des(5)}=1+1+0=2.$$

Note that we do not define the major index $\mathsf{maj}$, which will be discussed  in the last section.
We use the notation $\mathsf{stat}_1\sim \mathsf{stat}_2$ to mean two statistics are equidistributed 
and use $(\mathsf{stat}_1,\mathsf{stat}_2)$ to denote their joint distribution.
For example, it is well known that over $\mathfrak{S}_n$  we have $\mathsf{inv}\sim \mathsf{maj}$,
$\mathsf{cyc}\sim \mathsf{rlmin}$ and $\mathsf{des}\sim \mathsf{exc}\sim (\mathsf{wex}-1)$. 
Also we have the symmetric joint distribution $(\mathsf{cyc}, \mathsf{rlmin})\sim (\mathsf{rlmin}, \mathsf{cyc})$ 
over $\mathfrak{S}_n$. Readers are referred to~\cite{Bona_12} for more information. A statistic is \emph{Mahonian} if it is equidistributed with $\mathsf{inv}$, is \emph{Stirling} if equidistributed with $\mathsf{rlmin}$, and is \emph{Eulerian} if equidistributed with $\mathsf{des}$.


\section{Inversion, weak excedance and right-to-left minimum}

Our first result reveals that in partial permutations $\mathsf{rlmin}, \mathsf{inv}$ and $\mathsf{wex}$ can be jointly encoded in the seed sequences.  Define $$[n]_{w,q}:=w+q+\dots +q^{n-1} =[n]-1+w$$
by replacing the constant term of $[n]$ by $w$. Note that $[1]_{w,q}=w$ and $[0]_{w,q}=0$ by definition. 

\begin{thm}\label{thm_par_triple}
Let the seed sequences be $s_0=wp$ and 
$$(s_\ell,t_\ell)=\left(([\ell]_{w,q}+p[\ell+1]_q)q^\ell,p[\ell]_{w,q}[\ell]_qq^{2\ell-1}\right)$$ for $\ell\geq 1$. Then 
$a_{n,k}$ of the Catalan matrix is the generating function of $\mathfrak{P}_{n,k}$ with respect to $\mathsf{rlmin}, \mathsf{inv}$ and $\mathsf{wex}$. Namely,
$$a_{n,k}=\sum_{\pi\in \mathfrak{P}_{n,k}}w^{\rlmin(\pi)}p^{\wex(\pi)}q^{\inv(\pi)}.$$
\end{thm}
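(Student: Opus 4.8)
The plan is to argue by induction on $n$, showing that the generating function
$$b_{n,k}:=\sum_{\pi\in\mathfrak{P}_{n,k}}w^{\rlmin(\pi)}p^{\wex(\pi)}q^{\inv(\pi)}$$
satisfies the same recurrence and initial conditions as $a_{n,k}$. The initial conditions are immediate, since $\mathfrak{P}_{0,0}$ consists of the empty matrix on which all three statistics vanish, giving $b_{0,0}=1$ and $b_{0,k}=0$ for $k>0$. It therefore suffices to prove
$$b_{n,k}=b_{n-1,k-1}+s_k\,b_{n-1,k}+t_{k+1}\,b_{n-1,k+1},$$
which I would obtain by classifying each $\pi\in\mathfrak{P}_{n,k}$ according to a canonical removal of the largest element that simultaneously deletes the last row and the last column of the matrix notation.

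Writing $a=\pi_n$ for the entry of the last row and letting $i$ be the row with $\pi_i=n$ (if any), there are five cases. (i) If row $n$ and column $n$ are both zero, deleting them yields $\pi'\in\mathfrak{P}_{n-1,k-1}$. (ii) If $\pi_n=\mathsf{X}$ but $\pi_i=n$, I delete row $n$ and column $n$ and reset row $i$ to $\mathsf{X}$, landing in $\mathfrak{P}_{n-1,k}$. (iii) If $\pi_n=j<n$ but column $n$ is zero, deletion frees the value $j$ and gives $\pi'\in\mathfrak{P}_{n-1,k}$. (v) If $\pi_n=n$, deleting the fixed point gives $\pi'\in\mathfrak{P}_{n-1,k}$. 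The decisive case is (iv), where $\pi_n=j$ and $\pi_i=n$ with $i<n$ and $j<n$: rather than reconnecting $i$ to $j$, I delete both $1$'s, so that row $i$ becomes a zero row and column $j$ is freed, giving $\pi'\in\mathfrak{P}_{n-1,k+1}$. This choice is exactly what produces a genuine three-term recurrence, since the naive reconnection would absorb case (iv) into the level term and yield an $n$-dependent coefficient. Counting fibres, the inverse insertions number $1$ for the up-step, $k+k+1=2k+1$ for the level step, and $(k+1)^2$ for the down-step, matching $s_k$ and $t_{k+1}$ at $w=p=q=1$.

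It remains to show that summing $w^{\rlmin}p^{\wex}q^{\inv}$ over the insertions of each type multiplies $b_{n-1,k'}$ by the correct seed. For the up-step one checks directly from the matrix definitions that appending a zero row and a zero column changes none of $\rlmin,\wex,\inv$, giving coefficient $1$. For the level step, a type (iii) insertion places a $1$ at $(n,j)$, which is a deficiency (no factor $p$), contributes a factor $q^{k}$ times a power of $q$ growing with $j$, and is a right-to-left minimum only for the leftmost available column; summing over the $k$ zero columns produces $q^{k}[k]_{w,q}$. Insertions of types (ii) and (v) place a $1$ at $(i,n)$ and at $(n,n)$ respectively, each a weak excedance (factor $p$) and, for $k\ge 1$, never a right-to-left minimum; summing these $k+1$ choices produces $pq^{k}[k+1]_q$. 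Together these give $s_k$, and the case $k=0$ reduces to the single fixed-point insertion of weight $wp=s_0$. For the down-step the $(k+1)^2$ choices factor as a choice of zero row $i$ (a weak excedance at $(i,n)$, carrying the lone $w$ through a single right-to-left minimum, hence a factor $[k+1]_{w,q}$) times a choice of zero column $j$ (a factor $[k+1]_q$), the residual powers of $q$ and the single $p$ assembling into $pq^{2k+1}[k+1]_{w,q}[k+1]_q=t_{k+1}$.

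The main obstacle is the bookkeeping for $\inv$. Because the partial-permutation $\inv$ counts only those surviving $0$'s lying in a row or column that contains a $1$, I must verify that each insertion increments $\inv$ by precisely the exponent asserted above, uniformly in the ambient $\pi'$; this is where the powers of $q$ must organize themselves into the geometric series $[\,\cdot\,]_q$ and $[\,\cdot\,]_{w,q}$. I would establish this by expressing the $\inv$-increment of placing a $1$ in a given cell in terms of the numbers of occupied rows above and occupied columns to the left, and by checking that in the down-step the two new $1$'s contribute independently, so that the weight genuinely factors. A secondary point is pinning down which single insertion in each family creates a new right-to-left minimum, so that the constant term $w$ appears once in $[k]_{w,q}$ and once in $[k+1]_{w,q}$ but nowhere in the $[\,\cdot\,]_q$ factors; this I would verify by reading the matrix characterization of $\rlmin$ directly on the inserted cell.
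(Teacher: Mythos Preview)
Your proposal is essentially the paper's own proof: the same deletion of the last row and column, the same case split (your cases (i)--(v) are exactly the paper's four blocks $\mathfrak{P}^{(1)}$--$\mathfrak{P}^{(4)}$, with $\mathfrak{P}^{(3)}$ split into your (ii) and (iii), and $\mathfrak{P}^{(2)}$ being your (v)), and the same regrouping of the level-step contributions into $([k]_{w,q}+p[k+1]_q)q^k$.

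One slip to correct when you fill in the down-step. You attribute the lone $w$ to the choice of zero row $i$ (the cell $(i,n)$) and the plain $[k+1]_q$ to the choice of zero column $j$ (the cell $(n,j)$). It is the other way around: the cell $(i,n)$ can \emph{never} be a right-to-left minimum here, because column $j$ (and the remaining $k$ zero columns of $\pi'$) has no $1$ above row $i$; whereas $(n,j)$ is a right-to-left minimum precisely when $j$ is the leftmost zero column of $\pi'$. Thus the factor $[k+1]_{w,q}$ comes from the last-row insertion and $[k+1]_q$ from the last-column insertion. Because the product $[k+1]_{w,q}[k+1]_q$ is symmetric and both insertions contribute the same inversion increment $k+(\ell-1)$ (respectively $k+(\ell'-1)$), your stated weight $pq^{2k+1}[k+1]_{w,q}[k+1]_q=t_{k+1}$ is still correct; only the attribution needs fixing. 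Your plan to verify via the matrix characterization of $\rlmin$ would have caught this.
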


\begin{proof}
Let
$$b_{n,k}:=\sum_{\pi\in\mathfrak{P}_{n,k}}w^{\rlmin(\pi)}p^{\wex(\pi)}q^{\inv(\pi)}$$
and we will prove $b_{n,k}$ satisfies the same recurrence as $a_{n,k}$ does. Note that $b_{0,0}=1=a_{0,0}$.

The idea of the proof is to partition $\mathfrak{P}_{n,k}=\cup_{r=1}^4 \mathfrak{P}^{(r)}_{n,k}$ into a disjoint union of four sets.
Given $\pi \in \mathfrak{P}_{n,k}$ we say $\pi$ belongs to
\begin{enumerate}
\item $\mathfrak{P}^{(1)}_{n,k}$: if entries in the last row and column are all zeros;
\item $\mathfrak{P}^{(2)}_{n,k}$: if $\pi_{nn}=1$;
\item $\mathfrak{P}^{(3)}_{n,k}$: if $\pi_{nn}=0$, and either entries of the last row or the last column are all zeros;
\item $\mathfrak{P}^{(4)}_{n,k}$: if $\pi_{nn}=0$, and neither the last row nor the last column are all zeros.
\end{enumerate}

We shall compute 
$$b^{(i)}_{n,k}:=\sum_{\pi\in\mathfrak{P}^{(i)}_{n,k}}w^{\rlmin(\pi)}p^{\wex(\pi)}q^{\inv(\pi)}$$ for $i=1,2,3,4$,
and prove that
\begin{eqnarray*}
b^{(1)}_{n,k}&=&b_{n-1,k-1}, \quad \mbox{for } k\ge 0,\\
b^{(2)}_{n,k}&=& \begin{cases} wpb_{n-1,0}, \quad \mbox{for } k=0 \\ pq^{2k}b_{n-1,k}, \quad \mbox{for } k\ge 1  \end{cases}, \\
b^{(3)}_{n,k}&=& ([k]_{w,q}+p[k]_q)q^k b_{n-1,k}, \quad \mbox{for } k\ge 0,\\
b^{(4)}_{n,k}&=& \left(p[k+1]_{w,q}[k+1]_qq^{2k+1}\right)b_{n-1,k+1}, \quad \mbox{for } k\ge 0.
\end{eqnarray*}

We discuss each set one by one:  

\begin{enumerate}
\item If $\pi \in \mathfrak{P}^{(1)}_{n,k}$, delete its last column and row we get a $\pi'\in  \mathfrak{P}_{n-1,k-1}$. On the other hand, given 
$\pi'\in  \mathfrak{P}_{n-1,k-1}$ there exists exactly one $\pi \in \mathfrak{P}^{(1)}_{n,k}$ by attaching a zero column and a zero row.
 It is easy to see that 
$\rlmin(\pi)=\rlmin(\pi')$, $\wex(\pi)=\wex(\pi')$ and $\inv(\pi)=\inv(\pi')$ and hence 
\begin{eqnarray*} 
b^{(1)}_{n,k} &=& \sum_{\pi\in \mathfrak{P}^{(1)}_{n,k}}w^{\rlmin(\pi)}p^{\wex(\pi)}q^{\inv(\pi)}\\
&=& \sum_{\pi'\in \mathfrak{P}_{n-1,k-1}}w^{\rlmin(\pi')}p^{\wex(\pi')}q^{\inv(\pi')}\\
&=& b_{n-1,k-1}.
\end{eqnarray*}

\item 
If $\pi \in \mathfrak{P}^{(2)}_{n,k}$, delete its last column and row we get a $\pi'\in  \mathfrak{P}_{n-1,k}$.
On the other hand, given $\pi'\in \mathfrak{P}_{n-1,k}$ there is exactly one corresponding $\pi\in \mathfrak{P}^{(2)}_{n,k}$.
It is clear that $\wex(\pi)=\wex(\pi')+1$ and by definition $\inv(\pi)=\inv(\pi')+2k$.
Now if $k=0$, then $n$ is a right-to-left minimum and we have $\rlmin(\pi)=\rlmin(\pi')+1$. If $k\ge 1$, 
then $\rlmin(\pi)=\rlmin(\pi')$. Hence 
$$\sum_{\pi\in \mathfrak{P}^{(2)}_{n,k}}w^{\rlmin(\pi)}p^{\wex(\pi)}q^{\inv(\pi)}= 
\begin{cases}
wpb_{n-1,0} & \text{if }k=0,  \\
pq^{2k}b_{n-1,k} & \text{if }k\ge 1.
\end{cases}
$$
 
\item If $\pi \in \mathfrak{P}^{(3)}_{n,k}$, delete its last column and row we get a $\pi'\in  \mathfrak{P}_{n-1,k}$.
On the other hand, given $\pi'\in \mathfrak{P}_{n-1,k}$ there are
$2k$ choices to put a $1$ into the last column or row and result in a $\pi \in \mathfrak{P}^{(3)}_{n,k}$.
Let these possible positions be
 $(n,j_1),(n,j_2),\ldots,(n,j_k)$, where $j_1<j_2<\cdots<j_k<n$  and  $(i_1,n),(i_2,n),\ldots,(i_k,n)$, where $i_1<i_2<\cdots<i_k<n$.

First let us see if $\pi_{n, j_\ell}=1$ for some $1\leq \ell\leq k$. By the definition of the inversion, from $\pi'$ to $\pi$  there will be $k$ extra inversions in the $j_\ell$-th column and $\ell-1$ extra inversions in the last row. So 
$$\inv(\pi)=\inv(\pi')+k+\ell-1.$$
As $j_\ell$ is a right to left minimum if and only if $\ell=1$, we have $\rlmin(\pi)=\rlmin(\pi')+1$ if $\ell=1$, and  $\rlmin(\pi)=\rlmin(\pi')$ if $\ell>1$. 
There is no extra weak excedance and $\wex(\pi)=\wex(\pi')$. Adding these up we know that in this case from $\pi'$ to $\pi$ we should multiply a factor
$$wq^k+q^{k+1}+q^{k+2}+\dots q^{2k-1}=[k]_{w,q}q^k$$ in the generating function.

On the other hand, if $\pi_{i_\ell,n}=1$ for some $1\leq \ell\leq k$, then 
$\inv(\pi)=\inv(\pi')+k+\ell-1$, $\rlmin(\pi)=\rlmin(\pi')$, and $\wex(\pi)=\wex(\pi')+1$ by similar discussion. 
So in this case from $\pi'$ to $\pi$ we should multiply a factor $$pq^k+pq^{k+1}+\dots +pq^{2k-1}=pq^k[k]_q.$$ Hence
\begin{eqnarray*} 
b^{(3)}_{n,k} &=& \sum_{\pi\in \mathfrak{P}^{(3)}_{n,k}}w^{\rlmin(\pi)}p^{\wex(\pi)}q^{\inv(\pi)}\\
                      &=& \left( ([k]_{w,q}+p[k]_q)q^k \right)b_{n-1,k}.
\end{eqnarray*}
Also $b^{(3)}_{n,0}=0$ by definition.
\item If $\pi \in \mathfrak{P}^{(4)}_{n,k}$, 
delete its last column and row we get a partial permutation in $\mathfrak{P}_{n-1,k+1}$.
On the other hand, given $\pi'\in \mathfrak{P}_{n-1,k+1}$ there are
$2k+2$ positions to put $1$ into the last column or row and result in a $\pi \in \mathfrak{P}^{(4)}_{n,k}$,
say $(n,j_1),(n,j_2),\ldots,(n,j_{k+1})$ with $j_1<j_2<\cdots<j_{k+1}<n$, and $(i_1,n),(i_2,n),\ldots,(i_{k+1},n)$ with $i_1<i_2<\cdots<i_{k+1}<n$. 

 Let $\pi_{n,j_\ell}=1$ for some $1\le \ell \le k+1$ and $\pi_{i_{\ell'}, n}=1$ for some $1\leq \ell'\leq k+1$. 
Similar to the discussion above, we have $\inv(\pi)=\inv(\pi')+2k+1+(\ell-1)+(\ell'-1)$;
$\rlmin(\pi)=\rlmin(\pi')+1$ if $\ell=1$ and $\rlmin(\pi)=\rlmin(\pi')$ if $\ell>1$. Also $\wex(\pi)=\wex(\pi')+1$.
Hence 
\begin{eqnarray*}
b^{(4)}_{n,k} &=& \sum_{\pi\in \mathfrak{P}^{(4)}_{n,k}}w^{\rlmin(\pi)}p^{\wex(\pi)}q^{\inv(\pi)}\\
 &=& p[k+1]_{w,q}[k+1]_qq^{2k+1} b_{n-1,k+1}.
\end{eqnarray*}
\end{enumerate}
Therefore, from (1)-(4) we have
\begin{eqnarray*}
b_{n,0} &=& b^{(1)}_{n,0}+b^{(2)}_{n,0}+b^{(3)}_{n,0}+b^{(4)}_{n,0}\\
 &=& 0+wpb_{n-1,0}+0+p[1]_{w,q}[1]_qqb_{n-1,1}\\
&=& s_0b_{n-1,0}+t_{1}b_{n,1},
\end{eqnarray*}
and if $k\ge 1$ we have
\begin{eqnarray*}
b_{n,k} &=& b^{(1)}_{n,k}+b^{(2)}_{n,k}+b^{(3)}_{n,k}+b^{(4)}_{n,k}\\
 &=& b_{n-1,k-1}+ pq^{2k}b_{n-1,k}+ \left([k]_{w,q}+p[k]_q\right) q^kb_{n-1,k} + \left(p[k+1]_{w,q}[k+1]_qq^{2k+1}\right)b_{n-1,k+1}\\
&=& b_{n-1,k-1}+ \left([k]_{w,q}+p[k+1]_q\right)q^k b_{n-1,k}+\left(p[k+1]_{w,q}[k+1]_qq^{2k+1}\right)b_{n-1,k+1}\\
&=& b_{n-1,k-1}+s_kb_{n-1,k}+t_{k+1}b_{n,k+1},
\end{eqnarray*}
which is exactly the same recurrence relation satisfied by $a_{n,k}$ and the proof is completed. 
\end{proof}


\begin{table}[t]\label{rlminwexinv}
\begin{center}
\begin{tabular}{ c|cccc } 
$n \backslash k$ & 0& 1& 2 & 3\\ \hline
0 & 1& ~& ~ & ~\\ \hline
1 & $pw$ & 1 & ~ & ~\\ \hline
2 & $p^2w^2+pqw$ 
&   
$pq^2+pq+pw+qw$
& 1 & ~\\ \hline
3 & 
$\begin{array}{c} 
p^3w^3+p^2q^3w+p^2q^2w\\
\quad +2p^2qw^2+pq^2w^2
\end{array}$
 & 
$\begin{array}{c} 
 p^2q^4+pq^5+pq^4w+2p^2q^3+p^2q^2w\\
+pq^4+3pq^3w+p^2q^2+p^2qw+p^2w^2\\
+2pq^2w+pqw^2+q^2w^2+pqw
\end{array}$
 &
$\begin{array}{c} 
pq^4+pq^3+2pq^2\\
+q^3+q^2w+pq\\
+pw+qw
\end{array}$
& 1\\
\hline
\end{tabular}
\end{center}
\caption{Partial permutations with $\mathsf{rlmin}, \mathsf{wex}, \mathsf{inv}$.}
\end{table}

Table 1 presents some initial values.
By putting $q=1$, $w=1$ or $p=1$ we may obtain several specializations.
Among them the bivariate generating function with respect to $\mathsf{rlmin}$ and $\mathsf{inv}$
has a nice closed form.

\begin{thm}\label{thm_par_inv_rlmin}
Let $n,k$ be nonnegative integers. Then
$$\sum_{\pi\in \mathfrak{P}_{n,k}}w^{\rlmin(\pi)}q^{\inv(\pi)}=\left[{n\atop k}\right][n]_{w,q}[n-1]_{w,q}\cdots [k+1]_{w,q}.$$
\end{thm}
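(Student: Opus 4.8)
The plan is to specialize Theorem~\ref{thm_par_triple} by setting $p=1$, since the left-hand side here tracks only $\mathsf{rlmin}$ and $\mathsf{inv}$ and discards the weak-excedance refinement. Under $p=1$ the seeds collapse to $s_0=w$ and, for $\ell\ge 1$, to $(s_\ell,t_\ell)=\bigl(([\ell]_{w,q}+[\ell+1]_q)q^\ell,\ [\ell]_{w,q}[\ell]_qq^{2\ell-1}\bigr)$, and the quantity $c_{n,k}:=\sum_{\pi\in\mathfrak{P}_{n,k}}w^{\rlmin(\pi)}q^{\inv(\pi)}$ is precisely $a_{n,k}$ for these seeds. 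So it suffices to verify that the proposed closed form
$$f_{n,k}:=\left[{n\atop k}\right][n]_{w,q}[n-1]_{w,q}\cdots[k+1]_{w,q}$$
satisfies the same recurrence $f_{n,k}=f_{n-1,k-1}+s_kf_{n-1,k}+t_{k+1}f_{n-1,k+1}$ together with the initial condition $f_{0,0}=1$ (the empty product and empty Gaussian binomial both being $1$).

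First I would record the boundary behavior: the $q$-binomial $\left[{n\atop k}\right]$ vanishes for $k>n$ and equals $1$ for $k=n$, and the descending product $[n]_{w,q}\cdots[k+1]_{w,q}$ is empty (hence $1$) when $k=n$, so $f_{n,n}=1$, matching $c_{n,n}=|\mathfrak{P}_{n,n}|$ contributing $w^0q^0$ for the all-zero matrix. Then the heart of the argument is the inductive step. Dividing the target recurrence through by $f_{n-1,k}$ reduces everything to an identity among ratios of $q$-binomials and of the telescoping $[\cdot]_{w,q}$-products. I would use the standard Pascal-type relations $\left[{n\atop k}\right]=\left[{n-1\atop k-1}\right]+q^k\left[{n-1\atop k}\right]$ (and its companion with the roles of the two lower entries exchanged), rewrite $f_{n,k}/f_{n-1,k-1}$, $f_{n-1,k}/f_{n-1,k-1}$, and $f_{n-1,k+1}/f_{n-1,k-1}$ explicitly, and then check that the three coefficients $1$, $s_k$, $t_{k+1}$ combine correctly once the $[\cdot]_{w,q}$ factors are accounted for.

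The main obstacle I expect is the bookkeeping of the $w$-shifted brackets $[\ell]_{w,q}=[\ell]-1+w$, which do not factor as cleanly under the $q$-binomial recurrences as the ordinary $[\ell]_q$ do; in particular $s_k$ mixes a $w$-shifted bracket $[k]_{w,q}$ with an unshifted one $[k+1]_q$, and $t_{k+1}$ again mixes $[k+1]_{w,q}$ with $[k+1]_q$. The cleanest way to manage this is to isolate the ratio
$$\frac{[n]_{w,q}\cdots[k+1]_{w,q}}{[n-1]_{w,q}\cdots[k]_{w,q}}=\frac{[n]_{w,q}}{[k]_{w,q}},$$
so that the recurrence becomes a single rational identity in $n$, $k$, $w$, $q$ after clearing the common product $[n-1]_{w,q}\cdots[k+1]_{w,q}$. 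I would then verify this reduced identity directly, using the elementary relations $[n]_{w,q}=[k]_q+q^k[n-k]_{w,q}$ and $[k+1]_q=[k]_q+q^k$ to separate the $w$-dependent and $w$-independent parts; once the $w$-linear terms and the $w$-free terms are matched separately, the identity follows and the induction closes.
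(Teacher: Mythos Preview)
Your approach is correct and genuinely different from the paper's. The paper does \emph{not} deduce Theorem~\ref{thm_par_inv_rlmin} from Theorem~\ref{thm_par_triple}; instead it gives a direct combinatorial proof: each $\pi\in\mathfrak{P}_{n,k}$ is encoded by a pair $(S,T)$ where $S$ is the set of nonzero row indices and $T$ records the column positions of the $1$'s. The inversions coming from the zero rows depend only on $S$ and contribute the factor $\left[{n\atop k}\right]$, while placing the $1$'s one row at a time produces the factor $\prod_{\ell=1}^{n-k}(w+q+\cdots+q^{n-\ell})=[n]_{w,q}\cdots[k+1]_{w,q}$. Your route---specialize $p=1$ in Theorem~\ref{thm_par_triple} and then verify that the closed form satisfies the resulting three-term recurrence---is more algebraic but perfectly viable: after clearing the common product $[n-1]_{w,q}\cdots[k+1]_{w,q}$, the $[k]_{w,q}$-part of the identity matches via the $q$-Pascal rule $\left[{n\atop k}\right]=\left[{n-1\atop k-1}\right]+q^k\left[{n-1\atop k}\right]$, and the remaining $w$-free part reduces to $\left[{n\atop k}\right][n-k]_q=[k+1]_q\left[{n\atop k+1}\right]$. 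The trade-off is that the paper's argument explains the product form structurally, whereas yours leverages work already done in Theorem~\ref{thm_par_triple} at the cost of a somewhat opaque polynomial verification.

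One small correction: the splitting identity you intend to use should read $[n]_{w,q}=[k]_{w,q}+q^k[n-k]_q$, not $[n]_{w,q}=[k]_q+q^k[n-k]_{w,q}$; the $w$ lives in the constant term of $[n]_{w,q}$, so it must stay in the initial segment. With that fix the separation into $w$-dependent and $w$-free parts goes through as you describe.
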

\begin{proof}
For $\pi\in \mathfrak{P}_{n,k}$ in its matrix notation, let 
$$S=\{i :\text{the $i$-th row of $\pi$ is nonzero}\}=\{i_1,i_2,\ldots,i_{n-k}\}$$
 with $i_1< i_2<\cdots < i_{n-k}$ being the row indices of its $1$'s and let $T=[j_1,j_2,\ldots, j_{n-k}]$ be the $(n-k)$-permutation of $\{1,2,\ldots,n\}$ in the one-line notation
such that $(i_1, j_1), (i_2, j_2), \ldots,(i_{n-k}, j_{n-k})$ are the positions of $1$'s.
Conversely, given a pair $(S, T)$ for an $(n-k)$-subset $S$ and an $(n-k)$-permutation $T$ of $\{1,2,\ldots,n\}$, 
we can determine $\pi$ uniquely.

For this $\pi$, let us first count the inversions on those rows whose indices are not in $S$. Clearly these inversions are possible only
at the $j_1, j_2,\dots , j_{n-k}$-th columns and there are $i_\ell-\ell$ such inversions at the $j_\ell$-th column for $1\le \ell \le n-k$. Therefore, 
the number of such inversions equals the inversion $\mathsf{inv}(\mathbf{c}_S)$ of the binary word $\mathbf{c}_S=c_1c_2\dots c_n$, 
where $c_\ell=0$ if $\ell \in S$ and $c_\ell =1$ otherwise. Note that $\mathbf{c}_S$ only depends on the choice of $S$.


Note that for any inversion of $\pi$ not enumerated yet, there is a $1$ to its right. 
Now we fix an $(n-k)$-subset $S$. We put a $1$ from top to bottom for each row indexed by $S$ and compute the inversions to its left and the right-to-left minimums it may produce. It is clear that for $1\le \ell\le n-k$, there are $n+1-\ell$ choices of positions to put a $1$ in the $i_\ell$-th row.
Among these positions, only if we choose the leftmost feasible position will it contributes $1$ to $\rlmin(\pi)$. Also, if we choose the $r$-th feasible position from left to right, it contributes $r-1$ to $\inv(\pi)$. Hence we arrive at the generating function 
$$\prod_{\ell=1}^{n-k}(w+q+q^2+\cdots+q^{n-\ell}).$$
Note that it only depends on the size of $S$. Therefore, we have
\begin{eqnarray*}
\sum_{\pi\in \mathfrak{P}_{n,k}}w^{\rlmin(\pi)}q^{\inv(\pi)}&=&\sum_{S\subseteq [n], |S|=n-k} \left(q^{\inv(\mathbf{c}_S)}\prod_{\ell=1}^{n-k}(w+q+q^2+\cdots+q^{n-\ell})\right)\\
&=&\left(\sum_{S\subseteq [n], |S|=n-k} q^{\inv(\mathbf{c}_S)} \right) \left( \prod_{\ell=1}^{n-k}(w+q+q^2+\cdots+q^{n-\ell})\right)\\
&=&\left[{n\atop k}\right][n]_{w,q}[n-1]_{w,q}\cdots[k+1]_{w,q}
\end{eqnarray*}
and the theorem is proved.
\end{proof}


By letting $w=1$ and $q=1$ respectively we obtain the following corollaries, generalizing the classic results in $\mathfrak{S}_n$.
\begin{cor}\label{cor_par_inv}
Let $n,k$ be nonnegative integers, Then
$$\sum_{\pi\in \mathfrak{P}_{n,k}}q^{\inv(\pi)}=\left[{n\atop k}\right]^2[n-k]!$$
\end{cor}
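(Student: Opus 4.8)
The plan is to obtain Corollary~\ref{cor_par_inv} as the $w=1$ specialization of Theorem~\ref{thm_par_inv_rlmin}, so essentially all of the combinatorial work is already done and what remains is a short algebraic simplification of the resulting $q$-product. First I would observe that setting $w=1$ in the definition $[m]_{w,q}=w+q+\cdots+q^{m-1}$ recovers the ordinary $q$-integer, namely $[m]_{1,q}=1+q+\cdots+q^{m-1}=[m]$ for every $m$. Consequently, specializing $w=1$ makes the left-hand side $\sum_{\pi\in\mathfrak{P}_{n,k}}w^{\rlmin(\pi)}q^{\inv(\pi)}$ collapse to $\sum_{\pi\in\mathfrak{P}_{n,k}}q^{\inv(\pi)}$, while the right-hand side of Theorem~\ref{thm_par_inv_rlmin} becomes $\left[{n\atop k}\right][n][n-1]\cdots[k+1]$.

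The second step is to rewrite the descending $q$-product $[n][n-1]\cdots[k+1]$ as $[n]!/[k]!$, and then to factor one additional copy of the $q$-binomial coefficient out of it. Using the definition $\left[{n\atop k}\right]=[n]!/([k]![n-k]!)$, I would write $[n]!/[k]!=\left[{n\atop k}\right][n-k]!$, so that the right-hand side reads $\left[{n\atop k}\right]^2[n-k]!$, which is precisely the claimed closed form.

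Since the substantive content, namely the row-by-row insertion argument that produces the factor $\prod_{\ell=1}^{n-k}(w+q+\cdots+q^{n-\ell})$ together with the subset contribution $q^{\inv(\mathbf{c}_S)}$, is entirely contained in the proof of Theorem~\ref{thm_par_inv_rlmin}, there is no genuine obstacle here; the only point requiring a little care is the routine $q$-factorial cancellation and a check of the boundary cases. For $k=0$ one recovers the classical inversion enumerator of $\mathfrak{S}_n$, namely $[n]!=\left[{n\atop 0}\right]^2[n]!$, and for $k=n$ both sides equal $1$, consistent with the single all-zero partial permutation having $\inv=0$.
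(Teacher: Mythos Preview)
Your proposal is correct and matches the paper's approach exactly: the corollary is obtained from Theorem~\ref{thm_par_inv_rlmin} by setting $w=1$, and the only remaining work is the routine $q$-factorial manipulation $[n][n-1]\cdots[k+1]=[n]!/[k]!={n\brack k}[n-k]!$ that you carry out.
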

\begin{cor}\label{cor_par_rlmin}
Let $n,k$ be nonnegative integers, Then
$$\sum_{\pi\in \mathfrak{P}_{n,k}}w^{\rlmin(\pi)}=\left({n\atop k}\right)(w+k)(w+k+1)\cdots(w+n-1).$$
\end{cor}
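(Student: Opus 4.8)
The plan is to derive this corollary as the specialization $q=1$ of Theorem~\ref{thm_par_inv_rlmin}, tracking how each ingredient degenerates. First I would observe that setting $q=1$ collapses the inversion weight, since $q^{\inv(\pi)}=1$ for every $\pi$, so the left-hand side of Theorem~\ref{thm_par_inv_rlmin} reduces to exactly the generating function $\sum_{\pi\in\mathfrak{P}_{n,k}}w^{\rlmin(\pi)}$ that we wish to evaluate.

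Next I would specialize the right-hand side factor by factor. The $q$-binomial coefficient $\left[{n\atop k}\right]$ tends to the ordinary binomial ${n\choose k}$ as $q\to 1$, since $[m]=1+q+\dots+q^{m-1}\to m$. For the remaining factors, recall that $[m]_{w,q}=w+q+\dots+q^{m-1}$ consists of the single term $w$ together with $m-1$ powers of $q$; hence at $q=1$ it becomes $[m]_{w,1}=w+(m-1)$. Applying this to each factor in the product $[n]_{w,q}[n-1]_{w,q}\cdots[k+1]_{w,q}$ yields
$$
(w+n-1)(w+n-2)\cdots(w+k),
$$
which, read in increasing order, is precisely $(w+k)(w+k+1)\cdots(w+n-1)$. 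Combining the two specializations gives the claimed identity.

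There is essentially no analytic obstacle here, as the whole argument is a clean substitution; the only point demanding care is the bookkeeping of the reindexing, namely verifying that the index $\ell$ running from $k+1$ to $n$ in the product indeed produces the linear factors $w+\ell-1$ ranging over $w+k,\dots,w+n-1$. As a sanity check I would confirm the combinatorial meaning directly from the proof of Theorem~\ref{thm_par_inv_rlmin}: choosing the $n-k$ nonzero rows contributes the factor ${n\choose k}$, while placing a $1$ in the $\ell$-th nonzero row offers $n-\ell+1$ feasible positions, exactly one of which (the leftmost) creates a right-to-left minimum; at $q=1$ this gives the per-row factor $w+(n-\ell)$, and the product over $\ell=1,\dots,n-k$ reproduces the same linear factors, confirming the formula.
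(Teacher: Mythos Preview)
Your proof is correct and follows exactly the paper's approach: the corollary is obtained by setting $q=1$ in Theorem~\ref{thm_par_inv_rlmin}, reducing $\left[{n\atop k}\right]$ to $\binom{n}{k}$ and each $[m]_{w,q}$ to $w+m-1$. Your additional sanity check via the combinatorial argument is a nice touch but not needed.
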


An implication of the Corollary~\ref{cor_par_inv} (respectively Corollary~\ref{cor_par_rlmin}) is that other Mahonian (respectively Stirling) statistics  over $\mathfrak{P}_{n,k}$ should share the same generating function with respect to $\inv$ (respectively $\rlmin$). For Eulerian statistics, a possible checkpoint is the equidistribution with $\wex$. One can observe several interesting patterns in Table 2. 

\begin{table}[h!]\label{wextable}
\begin{center}
\begin{tabular}{ c|ccccc } 
 $n\backslash k$ & 0&  1 & 2 & 3 &4 \\ \hline
 0 & 1 & ~ & ~ & ~& ~\\
 1 & $p$ & $1$ & ~ & ~& ~\\
 2 & $p^2+p$ & $3p+1$ & $1$ & ~& ~\\
 3 & $p^3+4p^2+p$ & $7p^2+10p+1$ & $6p+3$ & $1$& ~\\
 4 & $p^4+11p^3+11p^2+p$ & $15p^3+55p^2+25p+1$ & $25p^2+40p+7$ & $10p+6$& $1$\\
\end{tabular}
\end{center}
\caption{Generating functions of partial permutations by $\mathsf{wex}$.}
\end{table}

\section{Descent, excedance and weak excedance}
In this section we prove that the fundamental result $\exc \sim \des $ over $\mathfrak{S}_n$ remains true for partial permutations, 
and $\exc \sim (\wex-1)$ is also true with a new point of view.

\subsection{Excedance and weak excedance}
We first prove that $\exc$ and $\wex$ are `equidistributed' in the following sense.

\begin{thm}\label{prop_exc_wex}
Let $n,k$ be nonnegative integers. Then the number of partial permutations $\pi \in \mathfrak{P}_{n,k}$ with $i$ excedances, $1\le i\le n-k$, equals those partial permutations with $n-k-i$ weak excedances. Namely, if we let 
$$f_n^{(k)}(q)=\sum_{\pi\in \mathfrak{P}_{n,k}}q^{\exc}\quad \text{and} \quad g_n^{(k)}(q)=\sum_{\pi\in \mathfrak{P}_{n,k}}q^{\wex},$$ Then 
for $1\leq i\leq n-k$ we have
$$[q^i]f_n^{(k)}(q)=[q^{n-k-i}]g_n^{(k)}(q).$$ 
\end{thm}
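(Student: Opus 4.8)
The plan is to realize the stated reversal as a single equidistribution established by an involution, namely matrix transposition. Fix $\pi\in\mathfrak{P}_{n,k}$ and write $m=n-k$ for the number of $1$'s in its matrix notation. Each such $1$ lies strictly above, exactly on, or strictly below the main diagonal, and these three cases are counted by $\exc(\pi)$, $\fix(\pi)$, and a quantity I will call $b(\pi)$, the number of $1$'s strictly below the diagonal. Since the weak excedances are exactly the excedances together with the diagonal $1$'s, we have $\wex(\pi)=\exc(\pi)+\fix(\pi)$, and hence the $1$'s strictly below the diagonal number $b(\pi)=m-\wex(\pi)$. Consequently the theorem is equivalent to the assertion that $\exc$ and $b$ are equidistributed over $\mathfrak{P}_{n,k}$: once that is known, $[q^i]f_n^{(k)}=\#\{\pi:\exc(\pi)=i\}=\#\{\pi:b(\pi)=i\}=\#\{\pi:\wex(\pi)=m-i\}=[q^{m-i}]g_n^{(k)}$.

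Next I would introduce the transposition map $\pi\mapsto\pi^{\mathsf{T}}$ on matrix notations and verify that it is a well-defined involution on $\mathfrak{P}_{n,k}$: transposing an $n\times n$ $(0,1)$-matrix with at most one $1$ in each row and column preserves that property, and it interchanges the $k$ zero rows with the $k$ zero columns, so $\pi^{\mathsf{T}}$ again lies in $\mathfrak{P}_{n,k}$, while applying it twice returns $\pi$. Because $(\pi^{\mathsf{T}})_{j,i}=\pi_{i,j}$, a $1$ at $(i,j)$ with $i<j$ (an excedance of $\pi$) is carried to a $1$ at $(j,i)$ with $j>i$ (a strictly-below-diagonal entry of $\pi^{\mathsf{T}}$), and conversely, while diagonal $1$'s are fixed. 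This gives $\exc(\pi^{\mathsf{T}})=b(\pi)$ for every $\pi$, and since transposition is a bijection of $\mathfrak{P}_{n,k}$ onto itself, $\exc$ and $b$ have the same distribution, completing the argument.

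There is essentially no hard step here; the points that merely require care are the bookkeeping identity $\wex=\exc+\fix$ (immediate from the definitions) and the fact that transposition respects the refinement into the blocks $\mathfrak{P}_{n,k}$. I would also remark that the identity in fact holds for $i=0$ as well, matching $[q^m]g_n^{(k)}$ with $\#\{\pi:\exc(\pi)=0\}$, so the stated range $1\le i\le n-k$ is simply the portion of present interest. Finally, specializing to $k=0$ recovers the classical symmetry of the Eulerian numbers and, together with $\wex-1\sim\exc$ on $\mathfrak{S}_n$, reproves $\exc\sim(\wex-1)$ from this transpose point of view, which is the "new point of view" announced at the start of the section.
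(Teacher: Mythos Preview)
Your proof is correct and follows essentially the same approach as the paper: both produce a one-line bijective argument via a matrix symmetry that swaps entries above the diagonal with entries below it. The only difference is the choice of involution---you use transposition $\pi\mapsto\pi^{\mathsf{T}}$, while the paper uses the $180^\circ$ rotation of the matrix; either sends $\exc$ to $n-k-\wex$ and the result follows immediately.
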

\begin{proof}
Let  $\phi$ be the bijection on $\mathfrak{P}_{n,k}$ such that $\phi(\pi)$ is 
obtained by rotating the matrix form of $\pi$ by $180^\circ$. It is easy that if $\exc(\pi)=i$ then $\wex(\phi(\pi))=n-k-i$ and the result follows.
\end{proof}

From this theorem the number of ordinary permutations with $r+1$ weak excedances equals those with $n-r-1$ excedances, hence equals those with $r$ excedances since $\sum_{\pi\in \mathfrak{S}_n}q^{\exc}=\sum_{\pi\in \mathfrak{S}_n}q^{\des}$ and the latter is palindromic. This offers an interesting way to understand the classic result $(\wex-1)\sim \exc$.

\subsection{Descent and excedance}
A key for a combinatorial proof of $\des \sim \exc$ over $\mathfrak{S}_n$ is the fundamental bijection (\cite{Stanley_EC1}, p.23).\\

\noindent \textit{Fundamental bijection $\phi: \mathfrak{S}_n \to \mathfrak{S}_n$}. Given $\pi =\pi_1\pi_2\dots \pi_n\in \mathfrak{S}_n$ we perform the following steps.
\begin{enumerate}
\item[(i)]  Put a vertical bar to the right of each right-to-left minimum of $\pi$. Hence $\pi$ is cut into segments.
\item[(ii)]  Regard each segment as a cycle, resulting in the cycle notation of a new permutation $\phi(\pi)$.
\end{enumerate}
 For example, the bijection maps $\pi=316425$ to $\phi(\pi)=(31)(642)(5)=361254$ via
$$316425\to 31|642|5| \to (31)(642)(5).$$
To perform the inverse, first we write a permutation in its cycle notation and arrange cycles in a way that 
in each cycle the smallest element is put at the last and these smallest elements are in increasing order among cycles. 
The desired permutation is obtained by dropping off all parentheses.
Note that $\phi$ can be extended to a subsequence of a permutation since the above steps still make sense once we can locate the right-to-left minimums.

Now we generalize the fundamental bijection to partial permutations.\\

\noindent \textit{Generalized fundamental bijection $\phi: \mathfrak{P}_{n,k} \to \mathfrak{P}_{n,k}$.}
Given $\pi\in \mathfrak{P}_{n,k}$ in the one line notation and let 
$i_1<i_2<\cdots<i_k$ be the numbers not appearing in $\pi$.
We perform the following steps.
\begin{enumerate}
\item[(i)] Write $\pi$ as 
$$\pi=\pi_{(1)}\mathsf{X}\pi_{(2)}\mathsf{X}\cdots \mathsf{X}\pi_{(k+1)},$$ where each $\pi_{(i)}$ is a (possibly empty) word.
\item[(ii)] Perform Fundamental bijection (only) on $\pi_{(k+1)}$ and result in $\tilde{\pi}_{(k+1)}$.
\item[(iii)] Define $$\phi(\pi):=(\pi_{(1)}i_1\mathsf{X})(\pi_{(2)}i_2\mathsf{X})\cdots(\pi_{(k)} i_k\mathsf{X})\tilde{\pi}_{(k+1)},$$
where $\tilde{\pi}_{(k+1)}$ is in the cycle notation.
\end{enumerate}

For instances, we have $\phi(1\mathsf{X}245) = (13\mathsf{X})(2)(4)(5)$, $\phi(1\mathsf{X}3\mathsf{X}756)= (12\mathsf{X})(34\mathsf{X})(75)(6),$ and 
$\phi(32\mathsf{X}\mathsf{X}186\mathsf{X})=(324\mathsf{X})(5\mathsf{X})(1867\mathsf{X})$.

The inverse mapping $\phi^{-1}$ can be done as follows. Given $\pi \in \mathfrak{P}_{n,k}$, 
we write its cycle notation in its \textit{standard form} 
$$\pi= c'_1c'_2\cdots c'_kc_1c_2\cdots c_{\cyc(\pi)},$$ 
where $c'_1,\ldots, c'_k$ are partial cycles in which the last numbers among them are increasing,
and $c_1,\ldots, c_{\cyc(\pi)}$ are full cycles with the convention that 
in each $c_i$ we put its smallest number to be the last one and we arrange $c_i$ in increasing order with respect to these last numbers.
Then $\phi^{-1}(\pi)$ is obtained by 
deleting the last number for each $c'_i$ and erase all parentheses. 

We can now prove the equidistribution of descents and excedances.
\begin{thm}\label{thm_exc_des}
Let $n,k$ be nonnegative integers. Then $$\des\sim \exc$$ over $\mathfrak{P}_{n,k}$.
\end{thm}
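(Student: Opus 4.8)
The plan is to prove the equidistribution by showing that the generalized fundamental bijection $\phi$ converts $\des$ into a weak-excedance count, and then to feed this into Theorem~\ref{prop_exc_wex}. One should not expect $\des(\pi)=\exc(\phi(\pi))$ to hold pointwise: already for ordinary permutations $\phi$ fixes $231$, while $\des(231)=1\neq 2=\exc(231)$. Instead, I would first establish the key identity
$$\wex(\phi(\pi))=(n-k)-\des(\pi)\qquad(\pi\in\mathfrak{P}_{n,k}).$$
Granting it, since $\phi$ is a bijection on $\mathfrak{P}_{n,k}$ it carries the set $\{\des=d\}$ bijectively onto $\{\wex=(n-k)-d\}$, and Theorem~\ref{prop_exc_wex} identifies the size of the latter with that of $\{\exc=d\}$; hence $\des\sim\exc$. (Equivalently, post-composing $\phi$ with the $180^\circ$ rotation of Theorem~\ref{prop_exc_wex} produces a single bijection $\Phi$ with $\exc(\Phi(\pi))=\des(\pi)$.)

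To prove the identity I would read $\wex(\phi(\pi))$ off cycle by cycle. Write $\pi=\pi_{(1)}\mathsf{X}\cdots\mathsf{X}\pi_{(k+1)}$ with missing values $i_1<\cdots<i_k$, so that $\phi(\pi)$ is made of the partial cycles $(\pi_{(j)}i_j\mathsf{X})$, $1\le j\le k$, together with the full cycles of $\tilde\pi_{(k+1)}$. For a partial cycle with $\pi_{(j)}=b_1\cdots b_{r}$, the rows bearing a $1$ are exactly $b_1,\dots,b_r$, sent by $b_s\mapsto b_{s+1}$ and $b_r\mapsto i_j$; a weak excedance occurs exactly at each ascent of the word $\pi_{(j)}i_j$, giving $r-\des(\pi_{(j)}i_j)$ of them. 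For a full cycle $(a_1\cdots a_m)$ coming from a segment of $\pi_{(k+1)}$ cut at a right-to-left minimum (so $a_m$ is the segment minimum), the weak excedances are the ascents of $a_1\cdots a_m$ together with the wraparound $a_m\mapsto a_1$, which is a weak excedance that degenerates to a fixed point when $m=1$; this totals $m-\des(a_1\cdots a_m)$. Summing over all cycles and using that the bars sit at right-to-left minima, so that every descent of $\pi$ lies strictly inside a segment, I obtain
$$\wex(\phi(\pi))=\Big(\sum_{j=1}^{k}r_j+|\pi_{(k+1)}|\Big)-\Big(\sum_{j=1}^{k}\des(\pi_{(j)}i_j)+\des(\pi_{(k+1)})\Big)=(n-k)-\des(\pi),$$
the first bracket being the total number $n-k$ of nonzero rows and the second being $\des(\pi)$ by definition.

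The only real obstacle is the bookkeeping in this count, which hinges on three points that must be checked carefully. First, the value $i_j$ appended inside each partial cycle is exactly what makes the weak-excedance deficit of that chain equal $\des(\pi_{(j)}i_j)$, matching the very definition of $\des$ on partial permutations, where the same $i_j$ is appended. Second, in each full cycle the minimal element must contribute precisely one weak excedance through the wraparound, including the degenerate case $m=1$, so that no off-by-one error arises. Third, descents of $\pi$ must never fall at the bars, which is the defining property of right-to-left minima and is what lets the within-segment descents reassemble into $\des(\pi_{(k+1)})$ and, with the appended terms, into $\des(\pi)$. Once these are in place the identity holds, and the equidistribution follows from Theorem~\ref{prop_exc_wex}.
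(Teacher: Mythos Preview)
Your proof is correct, and the key identity $\wex(\phi(\pi))=(n-k)-\des(\pi)$ is established with all the necessary care: the contribution $r_j-\des(\pi_{(j)}i_j)$ from each partial cycle matches precisely because the appended value is the same $i_j$ that appears in the definition of $\des$; the contribution $m-\des(a_1\cdots a_m)$ from each full cycle is correct since $a_m$ is the segment minimum, so the wraparound $a_m\mapsto a_1$ is always a weak excedance (even when $m=1$); and the reassembly $\sum_i\des(\sigma_i)=\des(\pi_{(k+1)})$ holds because consecutive segments are separated by ascents. Feeding the identity into Theorem~\ref{prop_exc_wex} then finishes the argument.

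The paper takes a different route. Rather than going through $\wex$, it builds a single bijection $\varphi$ with $\exc(\pi)=\des(\varphi(\pi))$ directly: write $\pi$ in cycle notation, reverse the entries of each cycle (keeping $\mathsf{X}$ in place), put the result in standard form, and apply $\phi^{-1}$. Your composite $\Phi=(\text{rotation})\circ\phi$ is genuinely a different bijection; for instance on $\pi=213\in\mathfrak{S}_3$ one has $\varphi^{-1}(213)=213$ while $\Phi(213)=132$. What your approach buys is a clean factorization: the fundamental bijection handles the passage from $\des$ to $\wex$, and the involution of Theorem~\ref{prop_exc_wex} handles $\wex\leftrightarrow\exc$, so each step does one job. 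The paper's approach avoids the detour through $\wex$ and the appeal to Theorem~\ref{prop_exc_wex}, at the cost of the extra cycle-reversal step, which is what converts weak excedances of $\phi(\pi)$ into excedances directly. Both arguments ultimately rest on the same combinatorial content of the generalized fundamental bijection.
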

\begin{proof}
We will give a bijection $\varphi$ on $\mathfrak{P}_{n,k}$ such that $\exc(\pi)=\des(\varphi(\pi))$ for all $\pi\in \mathfrak{P}_{n,k}$.  
Given $\pi\in \mathfrak{P}_{n,k}$, we perform the following steps.
\begin{enumerate}
\item[(i)] Write $\pi$ in the cycle notation (the order of the cycles does not matter).
\item[(ii)] Reverse entries of each cycle but keeping $\mathsf{X}$ unmoved. 
\item[(iii)] Arrange these cycles into the standard form.
\item[(iv)] Perform $\phi^{-1}$ and result in $\varphi(\pi)$. 
\end{enumerate}
For example, take $\pi=4\mathsf{X}35\mathsf{X}\mathsf{X}2=(3)(72\mathsf{X})(6\mathsf{X})(145\mathsf{X})$. Step 2 leads to $(3)(27\mathsf{X})(6\mathsf{X})(541\mathsf{X})$,
and Step 3 leads to $(541\mathsf{X})(6\mathsf{X})(27\mathsf{X})(3)$. Therefore
$$\varphi(4\mathsf{X}35\mathsf{X}\mathsf{X}2)=\phi^{-1}( (541\mathsf{X})(6\mathsf{X})(27\mathsf{X})(3))=54\mathsf{X}\mathsf{X}2\mathsf{X}3.$$
Readers may try another example $\varphi(1376\mathsf{X}42)=\mathsf{X}173264$. It is direct to verify that $\exc(\pi)=\des(\varphi(\pi))$ and the proof is done.
\end{proof}

\section{Extended Catalan matrix and set-valued statistics}
The notion of set-valued statistic over permutations is a natural generalization of the ordinary integer-valued statistic and have attracted much attention recently, see~\cite{Eu_Lo_Wong_15, Foata_Han_09, Lin_Kim_18, Mao_Zeng_21, Poznanovic_14} for examples.
In this section, we shall introduce the notion of extended Catalan matrix, generated from two 2-dimensional seed sequences, to help us deal with set-valued statistics over partial permutations.

Given two 2-dimensional sequences $\mathbf{s}=(s^{(i)}_\ell)_{0\leq \ell, 1\leq i}$ and $\mathbf{t}=(t^{(i)}_\ell)_{1\leq \ell, 1\leq i}$, we define an infinite lower triangular matrix  $A^{\mathbf{s},\mathbf{t}}=(a_{n,k})_{n,k\geq 0}$ in a similar way from the recurrence
$$
\begin{cases}
a_{0,0} = 1, a_{0,k}=0 \quad (k> 0), \\
a_{n,k} = a_{n-1,k-1}+s^{(n)}_ka_{n-1,k}+t^{(n)}_{k+1}a_{n-1,k+1}\quad(n\geq 1),
\end{cases}
$$
where $a_{n, -1}$ is set to be zero. The matrix $A^{\mathbf{s},\mathbf{t}}$ is called the {\it extended Catalan matrix} associated to the {\it extended seed sequences} (or {\it extended seeds} for short) $\mathbf{s}$ and $\mathbf{t}$. We will encode some set-valued statistics over partial permutations in the extended seeds.

Given $\pi\in \mathfrak{P}_{n,k}$ in the matrix notation. We define the following set-valued statistics: 

\begin{itemize}
\item $\Fix(\pi):=\{i:\pi_{ii}=1\}$, the set of fixed points.
\item $\Sfix(\pi)$. We call $i$ a \emph{strong fixed point of $\pi$} if $\pi_{ii}=1$ and the principal submatrix of $\pi$ formed by the first $i$ rows and columns is a permutation matrix. Let
$$\mathsf{Sfix}(\pi):= \{i:\text{ $i$ is a strong fixed point}\}$$
be the set of strong fixed points and $\sfix(\pi):=|\Sfix(\pi)|$.
\item $\Cyc(\pi):=\{i: \text{$i$ is the largest number in a full cycle}\}$.
\item $\Cyc_{\geq 2}(\pi):=\Cyc(\pi)-\Fix(\pi)$.
\item $\Scyc(\pi)$. Let $C$ be a full cycle of $\pi$ and $i$ is the largest number in $C$. If the principal submatrix of $\pi$ formed by the first $i$ rows and columns is a permutation matrix, then $C$ is called  a \emph{strong cycle}. Define
$$\Scyc(\pi):=\{i: \text{$i$ is the largest number in a strong cycle}\} $$ 
and $\scyc(\pi):=|\Scyc(\pi)|$
\item $\Scyc_{\geq 2}(\pi):=\Scyc(\pi)-\Sfix(\pi)$.
\item $\Excl(\pi):=\{j:\text{$\pi_{ij}=1$ and $i<j$}\}$.
\item $\Rlmip(\pi):=\{i: \text{$\pi_{ij}=1$ and $j$ is a right-to-left minimum of $\pi$}\}$.
\end{itemize}

Note that for $\pi\in \mathfrak{P}_{n,0}$, $\Scyc(\pi)-\{n\}=\Sfix(\pi)\cup\Scyc_{\geq 2}(\pi)-\{n\}$ is the connectivity set of permutation $\pi$ introduced by Stanley \cite{Stanley_05}.
Therefore we define the \emph{connectivity set} of $\pi \in \mathfrak{P}_{n,k}$ to be $\mathsf{Scyc}(\pi)-\{n\}$.
Also we define $$\mathbf{x}^S:=\prod_{i\in S}x_i $$
for a set of integers $S$.
\medskip

We give an example to demonstrate the interests of set-valued statistics.
By letting $p=q=1$ in Theorem \ref{thm_par_triple} we know that by taking seed sequences $s_0=w$ and
$(s_\ell,t_\ell)=(2\ell+w,\ell(\ell-1+w))$ for $\ell\geq 1$, we have 
$$a_{n,k}=\sum_{\pi\in\mathfrak{P}_{n,k}}w^{\rlmin(\pi)}.$$

The corresponding set-valued analogue is the following.
\begin{thm}\label{thm_par_Rlmip}
Let the seed sequences be $s^{(i)}_0=w_i$ and
$$(s^{(i)}_\ell,t^{(i)}_\ell)=(2\ell+w_i,\ell(\ell-1+w_i))$$
for $\ell\geq 1$. Then $a_{n,k}$ of the extended Catalan matrix is the generating function of $\mathfrak{P}_{n,k}$ with respect to $\mathsf{\Rlmip}$. Namely,
$$a_{n,k}=\sum_{\pi\in\mathfrak{P}_{n,k}}\mathbf{w}^{\Rlmip(\pi)}.$$
\end{thm}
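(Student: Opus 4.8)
The plan is to follow the proof of Theorem~\ref{thm_par_triple} verbatim at the structural level, upgrading the scalar bookkeeping of $\rlmin$ to the set-valued bookkeeping of $\Rlmip$. Set
$$b_{n,k}:=\sum_{\pi\in\mathfrak{P}_{n,k}}\mathbf{w}^{\Rlmip(\pi)},\qquad b_{0,0}=1,$$
and aim to show $b_{n,k}=b_{n-1,k-1}+s^{(n)}_k b_{n-1,k}+t^{(n)}_{k+1}b_{n-1,k+1}$, the defining recurrence of the extended Catalan matrix. The chosen extended seeds are exactly the $p=q=1$ specialization of the seeds in Theorem~\ref{thm_par_triple}, with the scalar $w$ replaced at the $n$-th step by the indexed variable $w_n$; so I would reuse the same disjoint partition $\mathfrak{P}_{n,k}=\bigcup_{r=1}^{4}\mathfrak{P}^{(r)}_{n,k}$ according to the last row and column, and in each block reconstruct $\pi$ from the partial permutation $\pi'$ obtained by deleting the last row and column.

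The only genuinely new input is a \emph{localization} statement for $\Rlmip$: when $\pi$ is assembled from $\pi'$ by filling the last row and/or column, \emph{no} index of $\Rlmip(\pi')$ is destroyed, and the \emph{only} index that can be created is $n$ itself, which lies in $\Rlmip(\pi)$ precisely when the last row carries a $1$ whose column is a right-to-left minimum of $\pi$. The engine behind this is the elementary principle that an empty column forbids every column to its right from being a right-to-left minimum, combined with the fact that a $1$ freshly placed in the last row sits strictly below all inherited $1$'s. Indeed, the $1$'s of $\pi$ in rows $1,\ldots,n-1$ occupy their old $\pi'$-positions, so each column's right-to-left-minimum status is governed by the same data as in $\pi'$ except for the single column the last row may fill; that column can only become a minimum when it is the leftmost still-empty column (i.e.\ $\ell=1$ in the notation of the proof of Theorem~\ref{thm_par_triple}), and any column to its right that had been blocked stays blocked because the fresh $1$ sits at the very bottom.

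Granting the localization, each block multiplies $\mathbf{w}^{\Rlmip(\pi')}$ by $w_n$ exactly when $n\in\Rlmip(\pi)$ and by $1$ otherwise. Block $\mathfrak{P}^{(1)}$ gives $b_{n-1,k-1}$; block $\mathfrak{P}^{(2)}$ gives $w_n b_{n-1,0}$ when $k=0$ and $b_{n-1,k}$ when $k\ge1$; summing over the $k$ last-row fillings and $k$ last-column fillings, block $\mathfrak{P}^{(3)}$ gives $(w_n+2k-1)b_{n-1,k}$; and since in block $\mathfrak{P}^{(4)}$ the weight is independent of which row the last column's $1$ occupies, summing over all $(k+1)^2$ fillings gives $(k+1)(w_n+k)b_{n-1,k+1}$. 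Collecting the coefficient of $b_{n-1,k}$ from blocks $\mathfrak{P}^{(2)}$ and $\mathfrak{P}^{(3)}$ yields $w_n+2k=s^{(n)}_k$ (and $w_n=s^{(n)}_0$ for $k=0$), while block $\mathfrak{P}^{(4)}$ reproduces $t^{(n)}_{k+1}=(k+1)(k+w_n)$, which closes the recurrence.

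The step I expect to be the main obstacle is the last-column bookkeeping in blocks $\mathfrak{P}^{(3)}$ and $\mathfrak{P}^{(4)}$: I must show that column $n$ is \emph{never} a right-to-left minimum, so that the $1$ placed in the last column contributes no index to $\Rlmip$ and the weight depends only on the last-row choice. For $k\ge1$ this is immediate since some column to the left of $n$ stays empty, but the subtle edge case is $k=0$ in $\mathfrak{P}^{(4)}$, where every column to the left of $n$ is occupied; here the $1$ forced into the last row lies below the $1$ of the last column and destroys the minimality of column $n$. Verifying this edge case cleanly, and confirming that the index $w_n$ (rather than a generic $w$) is the one introduced exactly at step $n$, is the delicate part of the argument; the remaining bookkeeping is a direct transcription of the $p=q=1$ computation in the proof of Theorem~\ref{thm_par_triple}.
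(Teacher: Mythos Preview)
Your proposal is correct and follows essentially the same approach as the paper. The paper does not give Theorem~\ref{thm_par_Rlmip} a separate proof; it instead proves the more general Theorem~\ref{thm_par_cyc_rlmin} (joint enumerator for $\Fix,\Cyc_{\ge 2},\Rlmip,\Excl,\Sfix,\Scyc$) via exactly the same four-block partition $\mathfrak{P}_{n,k}=\bigcup_{r=1}^{4}\mathfrak{P}^{(r)}_{n,k}$ and the same delete-the-last-row-and-column bookkeeping, from which Theorem~\ref{thm_par_Rlmip} drops out by setting $a_i=b_i=p_i=u_i=v_i=1$. Your block-by-block contributions match the paper's (specialized) ones line for line, and your explicit ``localization'' discussion of why only the index $n$ can enter $\Rlmip$ and why column $n$ is never a right-to-left minimum in blocks $\mathfrak{P}^{(3)}$ and $\mathfrak{P}^{(4)}$ simply makes explicit what the paper leaves implicit.
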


In fact, the main result of this section is to prove the following generalization.

\begin{thm}\label{thm_par_cyc_rlmin}
Let the seed sequences be $s^{(i)}_0=a_iw_iu_i$, $t^{(i)}_1=b_iw_ip_iv_i$ and 
\begin{eqnarray*}
s^{(i)}_\ell=&\ell-1+a_i+w_i+\ell p_i &\text{ for $\ell\geq 1$},\\ 
t^{(i)}_\ell=&(\ell-1+b_i)(\ell-1+w_i)p_i&\text{ for $\ell\geq 2$}. 
\end{eqnarray*}
Then 
$a_{n,k}$ of the extended Catalan matrix is the generating function of $\mathfrak{P}_{n,k}$ with respect to $\mathsf{Fix}, \mathsf{Cyc}_{\ge 2}$, $\mathsf{\Rlmip}$, $\Excl$, $\Sfix$ and $\Scyc$. Namely,
$$a_{n,k}=\sum_{\pi\in \mathfrak{P}_{n,k}}\mathbf{a}^{\Fix(\pi)}\mathbf{b}^{\Cyc_{\ge 2}(\pi)}\mathbf{w}^{\Rlmip(\pi)}\mathbf{p}^{\Excl(\pi)}\mathbf{u}^{\Sfix(\pi)}\mathbf{v}^{\Scyc(\pi)}.$$
\end{thm}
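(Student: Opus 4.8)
The plan is to imitate the recurrence‑matching proof of Theorem~\ref{thm_par_triple}. I set
$$b_{n,k}:=\sum_{\pi\in\mathfrak{P}_{n,k}}\mathbf{a}^{\Fix(\pi)}\mathbf{b}^{\Cyc_{\geq 2}(\pi)}\mathbf{w}^{\Rlmip(\pi)}\mathbf{p}^{\Excl(\pi)}\mathbf{u}^{\Sfix(\pi)}\mathbf{v}^{\Scyc(\pi)},$$
note that $b_{0,0}=1$, and aim to show that $b_{n,k}$ satisfies the defining recurrence of the extended Catalan matrix. As before I would partition $\mathfrak{P}_{n,k}=\bigcup_{r=1}^{4}\mathfrak{P}^{(r)}_{n,k}$ by the behaviour of the last row and column (both zero; $\pi_{nn}=1$; $\pi_{nn}=0$ with exactly one of the last row/column zero; $\pi_{nn}=0$ with neither zero), and in each class delete the last row and column to biject with partial permutations in $\mathfrak{P}_{n-1,\ast}$. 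The entire proof then reduces to tracking how each of the six statistics changes when the new largest letter $n$ is inserted, and verifying that the resulting monomials assemble into $s^{(n)}_k$ and $t^{(n)}_{k+1}$.

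A decisive simplification is that inserting row and column $n$ leaves every principal submatrix indexed by $\{1,\dots,m\}$ with $m<n$ untouched. Hence the restrictions of $\Fix,\Cyc_{\geq 2},\Rlmip,\Excl,\Sfix,\Scyc$ to letters smaller than $n$ are all preserved, and the only new weight comes from $n$ itself; in particular $n$ can be a strong fixed point or the maximum of a strong cycle only when the resulting matrix is an honest permutation matrix, i.e. only when $k=0$. Classes (1) and (2) are then immediate: class (1) contributes $b_{n-1,k-1}$, and class (2) contributes the fixed‑point factor $a_n$, augmented when $k=0$ by the right‑to‑left‑minimum factor $w_n$ and the strong‑fixed‑point factor $u_n$ (the length‑one cycle at $n$ is recorded by $u_n$, so that the $k=0$ entry is exactly $s^{(n)}_0=a_nw_nu_n$). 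For class (3) one places the single new $1$ either in the last row, at one of the $k$ empty columns $j_1<\dots<j_k$, or in the last column, at one of the $k$ empty rows; a row placement creates a right‑to‑left minimum only at the leftmost empty column, contributing $w_n+(k-1)$, while every column placement creates an excedance in column $n$, contributing $kp_n$. Summing classes (2) and (3) produces exactly the coefficient $(k-1)+a_n+w_n+kp_n=s^{(n)}_k$ of $b_{n-1,k}$.

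The crux is class (4). Here $n$ is spliced between the end (empty row $i_{\ell'}$) and the start (empty column $j_{\ell}$) of the $k+1$ partial cycles of $\pi'\in\mathfrak{P}_{n-1,k+1}$ via the chain $i_{\ell'}\to n\to j_{\ell}$. Whether $n$ becomes the maximum of a genuine full cycle — contributing $b_n$ through $\Cyc_{\geq 2}$ — or merely lengthens a partial cycle depends on whether $j_\ell$ and $i_{\ell'}$ belong to the \emph{same} partial cycle of $\pi'$. The key lemma I would isolate is purely enumerative: among the $(k+1)^2$ choices of a start column and an end row, \emph{exactly} $k+1$ pairs lie in a common partial cycle (one per partial cycle), independently of the internal structure of $\pi'$. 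Since a right‑to‑left minimum is created precisely when $\ell=1$ (the smallest empty column) and an excedance in column $n$ is created for every choice, the inserted monomial is $p_n$, times $w_n$ when $\ell=1$, times $b_n$ when $j_\ell$ and $i_{\ell'}$ share a partial cycle. Summing over all placements factors as $p_n(k+w_n)(k+b_n)=t^{(n)}_{k+1}$ for $k\geq 1$; crucially this is independent of $\pi'$, so $\mathfrak{P}^{(4)}_{n,k}$ contributes $t^{(n)}_{k+1}b_{n-1,k+1}$. When $k=0$ the single closed cycle is moreover strong (no empty rows remain), which attaches the extra strong‑cycle factor $v_n$ and explains why $t^{(n)}_1=b_nw_np_nv_n$ is defined apart from the $\ell\geq 2$ formula.

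I expect the main obstacle to be exactly this class‑(4) bookkeeping, where one must verify simultaneously that (i) the cycle‑pairing count is the $\pi'$‑independent number $k+1$; (ii) the right‑to‑left‑minimum factor depends only on $\ell$ and not on the cycle structure; and (iii) inserting $n$ disturbs none of the statistics of the smaller letters, so that the weights $b_n,w_n,p_n$ genuinely decouple and the sum factors. Granting this, classes (1)–(4) reproduce $a_{n-1,k-1}+s^{(n)}_ka_{n-1,k}+t^{(n)}_{k+1}a_{n-1,k+1}$ term by term, with the $k=0$ boundary handled by the vanishing of classes (1) and (3) and the collapse of classes (2) and (4) into $s^{(n)}_0$ and $t^{(n)}_1$. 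The theorem then follows by induction on $n$.
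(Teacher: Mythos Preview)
Your proposal is correct and follows essentially the same approach as the paper's own proof: the same four-way partition of $\mathfrak{P}_{n,k}$ by the last row and column, the same deletion map to $\mathfrak{P}_{n-1,\ast}$, and the same case analysis of which set-valued statistics acquire the new index $n$. The one cosmetic difference is that the paper indexes the $k+1$ partial cycles of $\pi'$ by their starting (empty) columns $h_1<\cdots<h_{k+1}$ and writes the class-(4) placements as $(n,h_i)$ and $(h'_j,n)$, so that the same-cycle condition is simply $i=j$ and the factorisation $p_n(k+w_n)(k+b_n)$ is immediate without isolating your ``key lemma'' as a separate step.
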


\begin{proof}
The idea of the proof is similar to that of Theorem~\ref{thm_par_triple}. Let 
$$b_{n,k}=\sum_{\pi\in \mathfrak{P}_{n,k}}\mathbf{a}^{\Fix(\pi)}\mathbf{b}^{\Cyc_{\ge 2}(\pi)}\mathbf{w}^{\Rlmip(\pi)}\mathbf{p}^{\Excl(\pi)}\mathbf{u}^{\Sfix(\pi)}\mathbf{v}^{\Scyc(\pi)}.$$
We will show that it satisfies the same recurrence as $a_{n,k}$ does.
By definition $b_{0,0}=1=a_{0,0}$.
We partition  $\mathfrak{P}_{n,k}$ into four disjoint sets 
$\mathfrak{P}_{n,k}=\cup_{r=1}^4 \mathfrak{P}^{(r)}_{n,k}$ 
and define 
$$b^{(r)}_{n,k} = \sum_{\pi\in \mathfrak{P}^{(r)}_{n,k}}\mathbf{a}^{\Fix(\pi)}\mathbf{b}^{\Cyc_{\ge 2}(\pi)}\mathbf{w}^{\Rlmip(\pi)}\mathbf{p}^{\Excl(\pi)}\mathbf{u}^{\Sfix(\pi)}\mathbf{v}^{\Scyc(\pi)},$$ 
$1\le r\le 4$,
as in the proof of Theorem \ref{thm_par_triple}.
It is easy to see that
$$b^{(1)}_{n,k} =b_{n-1,k-1}$$
and 
$$
b^{(2)}_{n,k} =
\begin{cases}
a_n\cdot w_n\cdot u_n\cdot b_{n-1,0},  &\text{ if } k=0, \\
a_n\cdot b_{n-1,k},  &\text{ if } k\geq 1.
\end{cases}
$$

For $\mathfrak{P}^{(3)}_{n,k}$ we proceed as follows. 
 If $k=0$, then $\mathfrak{P}^{(3)}_{n,k}=\emptyset$ and $b^{(3)}_{n,k}=0$.
Suppose $k\ge 1$. Given $\pi \in \mathfrak{P}^{(3)}_{n,k}$, by deleting its last column and row we obtain a $\pi' \in \mathfrak{P}_{n-1,k}$.
On the other hand, given $\pi' \in \mathfrak{P}_{n-1,k}$, the $2k$ possible positions to put an $1$ in the appended new last column or row to result in a $\pi \in \mathfrak{P}^{(3)}_{n,k}$ can be described in terms of the cycle decomposition of $\pi'$. Namely, let the $k$ partial cycles of $\pi'$ be
$$(h_1\dots h_1' \mathsf{X}), (h_2\dots h_2'\mathsf{X}),\dots, (h_k\dots h_k'\mathsf{X}),$$
with $h_1<h_2<\dots <h_k$. Then it is clear that the positions of the appended $1$ can be at
$$(n, h_1), (n, h_2), \dots , (n, h_k), (h_1',n), (h_2',n),\dots ,(h_k',n).$$
It is easy to see that $n\in\Excl(\pi)$ if and only if the appended 1 is at $(h'_i,n)$ for $1\leq i\leq k$. 
Note that the number of the full cycles keeps the same no matter where this $1$ is, and only 
when it is at $(n, h_1)$ does it result in $n\in\Rlmip(\pi)$. Hence we have 
$$b^{(3)}_{n,k} =
\begin{cases}
0,  &\text{if }k=0,\\
(w_n+k-1+kp_n)b_{n-1,k},  &\text{if }k\ge 1.
\end{cases}
$$

If $\pi \in \mathfrak{P}^{(4)}_{n,k}$, 
delete its last column and row we get a partial permutation $\pi'$ in $\mathfrak{P}_{n-1,k+1}$.
On the other hand, given $\pi'\in \mathfrak{P}_{n-1,k+1}$, there are
$2k+2$ positions to put $1$ into the appended new last column or row to form a $\pi \in \mathfrak{P}^{(4)}_{n,k}$.
Similar to the discussion of the above case, we let the $$(h_1\dots h_1' \mathsf{X}), (h_2\dots h_2'\mathsf{X}),\dots, (h_{k+1}\dots h_{k+1}'\mathsf{X})$$
be the partial cycles of $\pi'$ with $h_1<h_2<\dots <h_{k+1}$. We need to put 1's in two positions $(n,h_i)$ and $(h_j',n)$ for some $1\le i,j\le k+1$. 
For each choice we have $n\in\Excl(\pi)$. 
Note that if $i=j$, then $\Cyc(\pi)=\Cyc(\pi')\cup\{n\}$ and the new full cycle has length at least $2$. Otherwise $i\ne j$ and 
$\Cyc(\pi)=\Cyc(\pi')$. Moreover, only if the new $1$ is at $(n, h_1)$ does it result in $n\in\Rlmip(\pi)$. 
If $k=0$, clearly there is the only choice $i=j=1$ and $n\in \Scyc(\pi)$. 
Hence we have
$$b^{(4)}_{n,k} = 
\begin{cases}
b_n\cdot r_n\cdot p_n\cdot v_n\cdot b_{n-1,1},  &\text{if }k=0,\\
(w_n+k)(b_n+k)p_n\cdot b_{n-1,k+1}, &\text{if }k\geq 1.
\end{cases}
$$

Therefore, by adding four cases for $k=0$ we have
$$b_{n,0}= a_nw_nu_nb_{n-1,0} + b_nw_np_nv_nb_{n-1,1}$$
while for $k\ge 1$ we have
$$b_{n,k}= b_{n-1,k-1}+(a_n+w_n+k-1+kp_n)b_{n-1,k}+(b_n+k)(w_n+k)p_nb_{n-1,k+1}.$$
These share the same recurrences as $a_{n,k}$ does and hence the result follows.
\end{proof}

Since a fixed point is a cycle of length $1$, by letting $a_n=b_n$ for each $n$ we have the following
enumerator for $\Cyc$, $\Rlmip$, $\Excl$, $\Sfix$ and $\Scyc$.
\begin{cor}\label{cor_par_cyc_rlmin}
Let the extended seeds be $s^{(i)}_0=a_iw_iu_i$, $t^{(i)}_1=a_iw_ip_iv_i$ and $$(s^{(i)}_\ell, t^{(i)}_{\ell+1}) =(\ell-1+a_i+w_i+\ell p_i, (a_i+\ell)(w_i+\ell)p_i)$$ for $\ell\ge 1$. Then
$$a_{n,k}=\sum_{\pi \in \mathfrak{P}_{n,k}} \mathbf{a}^{\Cyc(\pi)}\mathbf{w}^{\Rlmip(\pi)}\mathbf{p}^{\Excl(\pi)}\mathbf{u}^{\Sfix(\pi)}\mathbf{v}^{\Scyc(\pi)}$$
in the extended Catalan matrix. 
\end{cor}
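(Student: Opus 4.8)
The plan is to derive the corollary as a direct specialization of Theorem~\ref{thm_par_cyc_rlmin} obtained by setting $a_i=b_i$ for every $i$. Theorem~\ref{thm_par_cyc_rlmin} already encodes all six set-valued statistics $\Fix$, $\Cyc_{\ge 2}$, $\Rlmip$, $\Excl$, $\Sfix$ and $\Scyc$ in the extended seeds, so nothing new about partial permutations needs to be proved. The only two things to check are (a) how the substitution $a_i=b_i$ collapses the pair $(\Fix,\Cyc_{\ge 2})$ into the single statistic $\Cyc$ in the summand, and (b) that the extended seeds of Theorem~\ref{thm_par_cyc_rlmin} reduce to exactly the seeds announced in the corollary.

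The key observation is purely combinatorial. By definition $\Cyc_{\ge 2}(\pi)=\Cyc(\pi)-\Fix(\pi)$, and every fixed point $i$ is the largest (indeed, the only) number of the length-one full cycle $(i)$, so $\Fix(\pi)\subseteq\Cyc(\pi)$. Hence $\Cyc(\pi)=\Fix(\pi)\sqcup\Cyc_{\ge 2}(\pi)$ is a disjoint union. Using the multiplicative convention $\mathbf{x}^S=\prod_{i\in S}x_i$, once $a_i=b_i$ this yields
$$\mathbf{a}^{\Fix(\pi)}\mathbf{b}^{\Cyc_{\ge 2}(\pi)}=\mathbf{a}^{\Fix(\pi)}\mathbf{a}^{\Cyc_{\ge 2}(\pi)}=\mathbf{a}^{\Cyc(\pi)},$$
so each summand of Theorem~\ref{thm_par_cyc_rlmin} becomes exactly the summand $\mathbf{a}^{\Cyc(\pi)}\mathbf{w}^{\Rlmip(\pi)}\mathbf{p}^{\Excl(\pi)}\mathbf{u}^{\Sfix(\pi)}\mathbf{v}^{\Scyc(\pi)}$ appearing in the corollary. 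The remaining four statistics $\Rlmip,\Excl,\Sfix,\Scyc$ are untouched by the substitution.

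It then remains to verify that the seed sequences agree under $a_i=b_i$. The initial seeds are immediate: $s^{(i)}_0=a_iw_iu_i$ is unchanged, and $t^{(i)}_1=b_iw_ip_iv_i$ becomes $a_iw_ip_iv_i$, as stated. For $\ell\ge 1$ the diagonal seed $s^{(i)}_\ell=\ell-1+a_i+w_i+\ell p_i$ is already free of $b_i$ and matches verbatim. The one place that needs care is the off-diagonal seed: Theorem~\ref{thm_par_cyc_rlmin} gives $t^{(i)}_\ell=(\ell-1+b_i)(\ell-1+w_i)p_i$ for $\ell\ge 2$, and substituting $b_i=a_i$ and reindexing $\ell\mapsto\ell+1$ yields $t^{(i)}_{\ell+1}=(a_i+\ell)(w_i+\ell)p_i$ for $\ell\ge 1$, which is precisely the form stated in the corollary. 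I expect this reindexing bookkeeping to be the only spot where an off-by-one slip could occur; once it is confirmed, the two extended Catalan matrices are generated by identical recurrences with identical seeds, hence are equal entrywise, and the corollary follows.
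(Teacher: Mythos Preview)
Your proof is correct and follows exactly the paper's approach: the paper derives the corollary in one line by ``letting $a_n=b_n$ for each $n$'' in Theorem~\ref{thm_par_cyc_rlmin}, and you carry this out explicitly, verifying both that $\mathbf{a}^{\Fix(\pi)}\mathbf{b}^{\Cyc_{\ge 2}(\pi)}$ collapses to $\mathbf{a}^{\Cyc(\pi)}$ via the disjoint decomposition $\Cyc(\pi)=\Fix(\pi)\sqcup\Cyc_{\ge 2}(\pi)$ and that the seeds match after the substitution and reindexing. Your added detail on the reindexing $\ell\mapsto\ell+1$ for $t^{(i)}_{\ell+1}$ is correct and makes the derivation more transparent than the paper's terse remark.
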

Note that the parameters $\mathbf{a},\mathbf{w}$ are symmetric. That is, if we exchange $a_i$ and $w_i$ for each $i$, then the extended seeds keep unchanged. Hence we have the following result, generalizing the well-known result that over $\mathfrak{S}_n$ the cycles and the right-to-left minimums have the symmetric joint distribution.

\begin{thm}\label{thm_symmetric}
For any $n,k\ge 0$, we have
$$(\Cyc, \Rlmip, \Excl, \Sfix, \Scyc)\sim (\Rlmip, \Cyc, \Excl, \Sfix, \Scyc)$$
over $\mathfrak{P}_{n,k}$. 
In particular, $\Cyc$ and $\Rlmip$ are equidistributed and have symmetric joint distribution over $\mathfrak{P}_{n,k}$. That is, 
$$(\Cyc, \Rlmip)\sim (\Rlmip, \Cyc).$$
\end{thm}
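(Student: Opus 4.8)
The plan is to exploit the symmetry in the parameters $\mathbf{a}$ and $\mathbf{w}$ noted immediately before the statement and convert it into an equidistribution via Corollary~\ref{cor_par_cyc_rlmin}. First I would record the structural principle that makes everything work: the matrix entry $a_{n,k}$ is assembled from the extended seeds purely through the recurrence $a_{n,k}=a_{n-1,k-1}+s^{(n)}_ka_{n-1,k}+t^{(n)}_{k+1}a_{n-1,k+1}$ with $a_{0,0}=1$. Hence, regarded as a polynomial in the variables $a_i,w_i,p_i,u_i,v_i$, each $a_{n,k}$ is a polynomial expression in the seed values alone, as one checks by a one-line induction on $n$.

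Next I would verify, term by term, that every seed in Corollary~\ref{cor_par_cyc_rlmin} is invariant under the involution that swaps $a_i$ and $w_i$ for all $i$ while fixing the remaining variables: $s^{(i)}_0=a_iw_iu_i$ and $t^{(i)}_1=a_iw_ip_iv_i$ are symmetric since $a_iw_i=w_ia_i$; $s^{(i)}_\ell=\ell-1+a_i+w_i+\ell p_i$ is symmetric since $a_i+w_i=w_i+a_i$; and $t^{(i)}_{\ell+1}=(a_i+\ell)(w_i+\ell)p_i$ is symmetric since the two factors merely trade places. By the principle of the previous paragraph, this forces $a_{n,k}$ itself to be invariant under the same swap.

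Then I would apply this swap to the generating-function identity of Corollary~\ref{cor_par_cyc_rlmin}. Its left side $\sum_\pi \mathbf{a}^{\Cyc(\pi)}\mathbf{w}^{\Rlmip(\pi)}\mathbf{p}^{\Excl(\pi)}\mathbf{u}^{\Sfix(\pi)}\mathbf{v}^{\Scyc(\pi)}$ is sent to $\sum_\pi \mathbf{a}^{\Rlmip(\pi)}\mathbf{w}^{\Cyc(\pi)}\mathbf{p}^{\Excl(\pi)}\mathbf{u}^{\Sfix(\pi)}\mathbf{v}^{\Scyc(\pi)}$, whereas $a_{n,k}$ is unchanged. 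Equating the two expressions yields exactly $(\Cyc,\Rlmip,\Excl,\Sfix,\Scyc)\sim(\Rlmip,\Cyc,\Excl,\Sfix,\Scyc)$, and specializing $\mathbf{p}=\mathbf{u}=\mathbf{v}=\mathbf{1}$ collapses this to the symmetric joint distribution $(\Cyc,\Rlmip)\sim(\Rlmip,\Cyc)$.

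There is no genuine obstacle here; the entire content is the observation that the Catalan recurrence only ever reads the seeds, so any symmetry of the seeds is automatically inherited by every $a_{n,k}$. The single point I would state with care is precisely this \emph{polynomial-in-the-seeds} principle, phrasing the invariance so that it applies simultaneously to all indices $a_i,w_i$ at once rather than to a single $i$; once that is in place, the equidistribution is immediate and no case analysis or bijection is required.
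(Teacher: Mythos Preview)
Your proposal is correct and matches the paper's own argument essentially verbatim: the paper deduces the theorem directly from Corollary~\ref{cor_par_cyc_rlmin} by observing that the extended seeds are invariant under swapping $a_i\leftrightarrow w_i$, which is exactly what you do. Your write-up simply makes explicit the ``polynomial-in-the-seeds'' induction that the paper leaves implicit.
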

Theorem \ref{thm_symmetric} generalizes Foata and Han's \cite{Foata_Han_09} result that $(\Cyc, \Rlmip)\sim (\Rlmip, \Cyc)$ over $\mathfrak{S}_n$. 
\medskip

By letting $a_i=a$, $b_i=b$, $w_i=w$, $p_i=p$, $u_i=u$ and $v_i=v$ for all $i$ in Theorem \ref{thm_par_cyc_rlmin}, we can also encode statistics cycle, right-to-left minimum, excedence, strong fixed point and non-singleton strong cycle in the seeds of a Catalan matrix.
\begin{cor}
Let the seeds be $s_0=awu$, $t_1=bwpv$ and $(s_\ell, t_{\ell+1}) =(\ell-1+a+w+\ell p, (b+\ell)(w+\ell)p)$ for $\ell\ge 1$. Then
$$a_{n,k}=\sum_{\pi \in \mathfrak{P}_{n,k}}  {a}^{\fix(\pi)}b^{\cyc_{\geq 2}(\pi)}{w}^{\rlmin(\pi)}{p}^{\exc(\pi)}{u}^{\sfix(\pi)}{v}^{\scyc(\pi)}$$
in the Catalan matrix. 
\end{cor}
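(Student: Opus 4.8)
The plan is to read off this corollary as the constant-parameter specialization of Theorem~\ref{thm_par_cyc_rlmin}. That theorem permits each of the six families $a_i,b_i,w_i,p_i,u_i,v_i$ to depend on the position index $i$; I would set $a_i=a$, $b_i=b$, $w_i=w$, $p_i=p$, $u_i=u$, $v_i=v$ for every $i$. Under this substitution the \emph{extended} seeds become position-independent, so the extended Catalan matrix of Theorem~\ref{thm_par_cyc_rlmin} collapses to an ordinary Catalan matrix in the sense of Section~1, and the left-hand sides $a_{n,k}$ coincide. The only work is then to check that both sides transform into the expressions claimed in the corollary.

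First I would confirm that the specialized extended seeds agree with the ordinary seeds in the statement. Substituting the constant values into $s^{(i)}_0=a_iw_iu_i$ and $t^{(i)}_1=b_iw_ip_iv_i$ gives $s_0=awu$ and $t_1=bwpv$, and the formula $s^{(i)}_\ell=\ell-1+a_i+w_i+\ell p_i$ reduces directly to $s_\ell=\ell-1+a+w+\ell p$ for $\ell\ge 1$. The one point requiring attention is the index shift in $t$: the theorem provides $t^{(i)}_\ell=(\ell-1+b_i)(\ell-1+w_i)p_i$ for $\ell\ge 2$, so replacing $\ell$ by $\ell+1$ yields $t_{\ell+1}=(b+\ell)(w+\ell)p$ for $\ell\ge 1$, exactly matching the corollary.

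Next I would show that each set-valued monomial degenerates to its integer-valued analogue. Since $\mathbf{x}^S=\prod_{i\in S}x_i$, once every $x_i$ is the same scalar $x$ we get $\mathbf{x}^S=x^{|S|}$; thus $\mathbf{a}^{\Fix(\pi)}=a^{\fix(\pi)}$, $\mathbf{b}^{\Cyc_{\ge 2}(\pi)}=b^{\cyc_{\ge 2}(\pi)}$, $\mathbf{u}^{\Sfix(\pi)}=u^{\sfix(\pi)}$ and $\mathbf{v}^{\Scyc(\pi)}=v^{\scyc(\pi)}$ straight from the definitions of the underlying scalar statistics. The only genuine care needed is for the two factors $\mathbf{w}^{\Rlmip(\pi)}$ and $\mathbf{p}^{\Excl(\pi)}$, where the set-valued statistics carry different names from their scalar counterparts: because the assignment $i\mapsto j$ with $\pi_{ij}=1$ is injective on the nonzero rows, one has $|\Rlmip(\pi)|=\rlmin(\pi)$ and $|\Excl(\pi)|=\exc(\pi)$, hence $\mathbf{w}^{\Rlmip(\pi)}=w^{\rlmin(\pi)}$ and $\mathbf{p}^{\Excl(\pi)}=p^{\exc(\pi)}$. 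Assembling these, the right-hand side of Theorem~\ref{thm_par_cyc_rlmin} becomes precisely $\sum_{\pi\in\mathfrak{P}_{n,k}}a^{\fix(\pi)}b^{\cyc_{\ge 2}(\pi)}w^{\rlmin(\pi)}p^{\exc(\pi)}u^{\sfix(\pi)}v^{\scyc(\pi)}$, as desired. There is no substantive obstacle: all the real content lives in the recurrence argument of Theorem~\ref{thm_par_cyc_rlmin}, and the proof amounts to the two bookkeeping checks above, namely the $t$-seed index shift and the identifications $|\Rlmip|=\rlmin$, $|\Excl|=\exc$.
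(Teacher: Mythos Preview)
Your proposal is correct and matches the paper's own approach exactly: the corollary is obtained from Theorem~\ref{thm_par_cyc_rlmin} by the constant substitution $a_i=a$, $b_i=b$, $w_i=w$, $p_i=p$, $u_i=u$, $v_i=v$, and your two bookkeeping checks (the index shift for $t_{\ell+1}$ and the identifications $|\Rlmip|=\rlmin$, $|\Excl|=\exc$) are the only details to verify.
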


For completeness' sake we give a bijective proof of  $\cyc\sim \rlmin$ over $\mathfrak{P}_{n,k}$.
To this end we define a new statistic
\emph{star right-to-left minimum} $\rlmin^*$.
Given  $\pi\in \mathfrak{P}_{n,k}$ , let $\pi= \pi_{(1)}X\pi_{(2)}X \cdots X\pi_{(k+1)}$ ($\pi_{(i)}$ could be empty) be its one-line notation and define
$$\rlmin^*(\pi):=\text{the number of right-to-left minimums of the word } \pi_{(k+1)},$$
and $\rlmin^*(\pi)=0$ if $\pi_{(k)}=\emptyset$.
\begin{proof}  We will show that $$\rlmin \sim \rlmin^* \sim \cyc.$$
Two easy observations of $\rlmin$ and $\rlmin^*$ on the matrix notation of $\pi$ are useful. Both are immediate from definition.
\begin{enumerate}
\item The entry $\pi_{i,j}=1$ contributes $1$ to $\rlmin(\pi)$ if and only if 
there is no zero column before the $j$-th column and all entries to the left or below position $(i,j)$ are zeros.
\item The entry $\pi_{i,j}=1$ contributes $1$ to $\rlmin^*(\pi)$ if and only if 
there is no zero row after the $i$-th row, and all entries to the left or below position $(i,j)$ are zero.
\end{enumerate}
We first prove $\rlmin\sim\rlmin^*$. This is done by reflecting $\pi$ of the matrix notation with respect to its antidiagonal.
Let $\psi(\pi)$ be the reflection. Then by the observations above,
The entry $(i,j)=1$ contributes $1$ to $\rlmin$ of $\pi$ if and only if $(n+1-j, n+1-i)=1$ contributes $1$ to $\rlmin^*$ of $\psi(\pi)$.
Hence $\rlmin(\pi)=\rlmin^*(\psi(\pi))$ and $\rlmin\sim\rlmin^*$.

The fact $\rlmin^*\sim \cyc$ is easier. It is a direct consequence from the definition of the generalized fundamental bijection $\phi(\pi)$ 
as in $\phi$ we know that $\pi_{(i)}$ does not contribute any cycle for $1\le i\le k$ and all manipulations only involve $\pi_{(k+1)}$. Hence the desired conclusion is proved. 
\end{proof}


\section{Connected partial permutations}
Different seeds may result in matrices whose entries count interesting permutation families. In this and the next sections we present two nontrivial instances, namely the connected permutations and cycle-up-down permutations. We start with the connected partial permutations.
\medskip

Recall that a permutation $\pi \in\mathfrak{S}_n$ is connected if $n$ is the least number $i$ such that 
$\{\pi_1,\pi_2,\dots ,\pi_i\}=\{1,2,\dots , i\}$. Similarly, we say 
a partial permutation $\pi\in \mathfrak{P}_{n,k}$ is {\it connected} if in its matrix notation there is no $1\leq i<n$ such that the principal submatrix formed by the first $i$ rows and columns is a permutation of $\{1,2,\ldots,i\}$. 
Let $\mathfrak{C}_{n,k}$ be the set of connected partial permutations in $\mathfrak{P}_{n,k}$ and 
$$C_{n,k}(\mathbf{Z}):=\sum_{\pi\in \mathfrak{C}_{n,k}}\mathbf{z}_1^{\Stat_1(\pi)}\mathbf{z}_2^{\Stat_2(\pi)}\dots \mathbf{z}_r^{\Stat_r(\pi)},$$ 
$$C_{n,k}(\mathbf{z}):=\sum_{\pi\in \mathfrak{C}_{n,k}}z_1^{\stat_1(\pi)}z_2^{\stat_2(\pi)}\dots z_r^{\stat_r(\pi)}$$ 
be the generating function of $\mathfrak{C}_{n,k}$ with respect to set-valued statistics $\Stat_1, \Stat_2,\dots, \Stat_r$ and integer-valued statistics $\stat_1, \stat_2,\dots, \stat_r$, respectively.
Also let $\mathfrak{C}_{n}:=\mathfrak{C}_{n,0}$ be the set of ordinary connected permutations.
Our investigation is from the following observation, whose proof will be clear later. See Table 3 for initial values.

\begin{table}
\begin{center}
\begin{tabular}{ c|ccccc } 
 $n\backslash k$ & 0&  1 & 2 & 3 &4 \\ \hline
 0 & 1 & ~ & ~ & ~& ~\\
 1 & 3 & 1 & ~ & ~& ~\\
 2 & 13 & 8 & 1 & ~& ~\\
 3 & 71& 62 & 15 & 1& ~\\
 4 & 461 & 516 & 183 & 24& 1\\
\end{tabular}
\end{center}
\caption{Catalan matrix from the seeds $(s_\ell, t_\ell)=(2\ell+3, (\ell+1)^2)$.}
\end{table}

\begin{prop}\label{prop_connected}
Let the seeds be $$(s_\ell, t_\ell)=(2\ell+3, (\ell+1)^2).$$
Then  for $n,k\ge 0$,  we have
\begin{enumerate}
\item $a_{n,k}=|\mathfrak{C}_{n+1,k+1}|.$
\item $a_{n,0}=|\mathfrak{C}_{n+2}|.$
\end{enumerate}
\end{prop}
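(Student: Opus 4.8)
The plan is to prove part~(1) by showing that $c_{m,j}:=|\mathfrak{C}_{m,j}|$ obeys the same initial data and recurrence as $a_{n,k}$ under the identification $a_{n,k}=c_{n+1,k+1}$, exactly in the spirit of the proof of Theorem~\ref{thm_par_triple}. First I record the initial conditions $c_{1,1}=1$ (the $1\times 1$ zero matrix is vacuously connected) and $c_{1,j}=0$ for $j\ge 2$, matching $a_{0,0}=1$ and $a_{0,k}=0$ for $k>0$. Substituting $a_{n,k}=c_{n+1,k+1}$ into the defining recurrence $a_{n,k}=a_{n-1,k-1}+(2k+3)a_{n-1,k}+(k+2)^2a_{n-1,k+1}$ and writing $m=n+1$, $j=k+1$ turns it into
$$c_{m,j}=\begin{cases} 3c_{m-1,1}+4c_{m-1,2}, & j=1,\\ c_{m-1,j-1}+(2j+1)c_{m-1,j}+(j+1)^2c_{m-1,j+1}, & j\ge 2,\end{cases}$$
where the $j=1$ row loses its leading term because $a_{n-1,-1}=0$. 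This is the identity I would establish combinatorially.

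The engine is the same four-way split $\mathfrak{P}_{m,j}=\cup_{r=1}^4\mathfrak{P}^{(r)}_{m,j}$ by the content of the last row and column, now restricted to connected permutations. The one new ingredient is a connectivity lemma: for $\pi\in\mathfrak{P}_{m,j}$ with $j\ge 1$, whether $i<m$ is a cut point of $\pi$ depends only on the top-left $i\times i$ block, so deleting the last row and column to form $\pi'$ preserves all cut points $i\le m-2$. Consequently $\pi$ is connected if and only if $\pi'$ is connected, \emph{unless} $\pi'$ is a full permutation matrix, in which case $i=m-1$ is a cut point and $\pi$ fails to be connected. In cases~2,~3 and~4 the reduced $\pi'$ always retains at least one zero row, so it is never a permutation matrix, and the multiplicities $1$, $2j$ and $(j+1)^2$ from Theorem~\ref{thm_par_triple} carry over verbatim, contributing $c_{m-1,j}$, $2j\,c_{m-1,j}$ and $(j+1)^2c_{m-1,j+1}$. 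In case~1 (last row and column both zero) we have $\pi'\in\mathfrak{P}_{m-1,j-1}$, which is a permutation matrix precisely when $j=1$; hence case~1 contributes $c_{m-1,j-1}$ for $j\ge 2$ but is empty for $j=1$. Summing the four contributions reproduces the displayed recurrence.

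For part~(2) the cleanest route combines part~(1), which gives $a_{n,0}=c_{n+1,1}$, with a bijection $\mathfrak{C}_{n+2,0}\to\mathfrak{C}_{n+1,1}$. Given a connected permutation $\sigma$ of order $n+2$, connectedness forces $\sigma_{n+2}\ne n+2$, so deleting the last row and the last column removes two distinct $1$'s and leaves an $(n+1)\times(n+1)$ matrix $\pi$ with exactly one zero row $b$ and one zero column $a$; since every proper cut point of $\pi$ would also be a cut point of $\sigma$, the image $\pi$ is connected, and the map inverts by appending a new last row and column with $1$'s at $(n+2,a)$ and $(b,n+2)$. Alternatively one may argue purely by generating functions: with $P_k=\sum_n|\mathfrak{P}_{n,k}|\,x^n$ the block-diagonal (connected-component) decomposition yields $P_k=P_0\,C_k$ and $C_0=1-1/P_0$, so the elementary identity $x(P_0+P_1)=P_0-1$ gives $C_0=x+xC_1$, hence $c_{m+1,0}=c_{m,1}$.

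The main obstacle is the connectivity bookkeeping in part~(1): one must verify that adjoining a $1$ to the last row or column in cases~2--4 never manufactures a cut point at $m-1$ (guaranteed precisely by the surviving zero row of $\pi'$), and that the degenerate $j=1$ behaviour of case~1 is correctly isolated, since this is exactly where the connected recurrence departs from the ordinary partial-permutation recurrence and produces the $3c_{m-1,1}+4c_{m-1,2}$ boundary.
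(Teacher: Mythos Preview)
Your proof is correct and is, in essence, the counting specialization of the paper's own argument. The paper defers Proposition~\ref{prop_connected} to Theorem~\ref{thm_connected_int}, whose proof uses exactly the same four-way split $\mathfrak{P}_{m,j}=\bigcup_{r}\mathfrak{P}^{(r)}_{m,j}$ together with the same connectivity observation you isolate: for $j\ge 1$, passing from $\pi$ to $\pi'$ preserves connectedness except in case~(1) with $j=1$, where $\pi'$ becomes a genuine permutation matrix and $m-1$ is forced to be a cut point. Your bijection for part~(2) is likewise the $t_1=1$ instance of the paper's identity $C_{n,0}=t_1C_{n-1,1}$, which arises from noting that $\pi\in\mathfrak{C}_{n,0}$ iff $\pi\in\mathfrak{P}^{(4)}_{n,0}$ with $\pi'\in\mathfrak{C}_{n-1,1}$. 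The only difference is packaging: the paper first states and proves the general shifted-seed principle (Theorem~\ref{thm_connected_int}) for arbitrary statistics satisfying the three local compatibility conditions, then reads off the proposition; you carry out the same computation directly in the special case of cardinalities.
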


The highlight in this section is the following, which says that under certain conditions results on connected partial permutations inherit those from partial permutations. Our arguments depend on the partition $\mathfrak{P}_{n,k}=\cup_{r=1}^4 \mathfrak{P}^{(r)}_{n,k}$ as before. For $\pi\in \mathfrak{P}_{n,k}$, we define $\pi'$ to be the submatrix obtained by deleting the last row and column. Recall that $\pi'$ belongs to $\mathfrak{P}_{n-1,k-1}, \mathfrak{P}_{n-1,k}, \mathfrak{P}_{n-1,k}$ or $\mathfrak{P}_{n-1,k-1}$ if $\pi$ belongs to $\mathfrak{P}^{(1)}_{n,k}, \mathfrak{P}^{(2)}_{n,k}, \mathfrak{P}^{(3)}_{n,k}$ or $\mathfrak{P}^{(4)}_{n,k}$ respectively.

\begin{thm}\label{thm_connected}
Let 
$$a_{n,k}=\sum_{\pi\in \mathfrak{P}_{n,k}} \mathbf{Z}^{\mathbf{Stat}(\pi)}:= \sum_{\pi\in \mathfrak{P}_{n,k}}\mathbf{z}_1^{\Stat_1(\pi)}\mathbf{z}_2^{\Stat_2(\pi)}\dots \mathbf{z}_r^{\Stat_r(\pi)}$$
be the entry of the extended Catalan matrix $A^{\mathbf{s}, \mathbf{t}}$ from the extended seeds $\mathbf{s}=(s^{(i)}_\ell)$ and $\mathbf{t}=(t^{(i)}_\ell)$ with respect to set-valued statistics $(\Stat_1, \Stat_2,\dots ,\Stat_r)$. Suppose the following hold:
\begin{enumerate}
\item For any $\rho \in \mathfrak{P}_{n-1,k-1}$, we have $\displaystyle \sum_{\pi\in \mathfrak{P}^{(1)}_{n,k},\, \pi'=\rho }\mathbf{Z}^{\mathbf{Stat}(\pi)}=\mathbf{Z}^{\mathbf{Stat}(\rho )}$,
\item For any $\rho \in \mathfrak{P}_{n-1,k}$, we have $\displaystyle \sum_{\pi\in \mathfrak{P}^{(2)}_{n,k} \cup \mathfrak{P}^{(3)}_{n,k},\, \pi'=\rho }\mathbf{Z}^{\mathbf{Stat}(\pi)}= s^{(n)}_k\mathbf{Z}^{\mathbf{Stat}(\rho )}$,
\item For any $\rho \in \mathfrak{P}_{n-1,k+1}$, we have $\displaystyle \sum_{\pi\in \mathfrak{P}^{(4)}_{n,k},\, \pi'=\rho } \mathbf{Z}^{\mathbf{Stat}(\pi)}=t^{(n)}_{k+1}\mathbf{Z}^{\mathbf{Stat}(\rho )}$.
\end{enumerate}
Let $\mathbf{s}^+=(s^{(i+1)}_{\ell+1})$ and $\mathbf{t}^+=(t^{(i+1)}_{\ell+1})$ and $A^{\mathbf{s}^+,\mathbf{t}^+}$ be the extended Catalan matrix generated by $\mathbf{s}^+,\mathbf{t}^+$.
Then
$$\left[A^{\mathbf{s}^+,\mathbf{t}^+}\right]_{n,k}=C_{n+1,k+1}(\mathbf{Z})$$
and
$$C_{n,0}(\mathbf{Z})=t^{(n)}_1C_{n-1,1}(\mathbf{Z}).$$
\end{thm}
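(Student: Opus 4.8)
The plan is to set $d_{n,k}:=C_{n+1,k+1}(\mathbf{Z})$ and to show that the array $(d_{n,k})$ obeys exactly the extended Catalan recurrence generated by the shifted seeds, namely
$$d_{n,k}=d_{n-1,k-1}+s^{(n+1)}_{k+1}\,d_{n-1,k}+t^{(n+1)}_{k+2}\,d_{n-1,k+1},$$
with base value $d_{0,0}=C_{1,1}(\mathbf{Z})=1$, coming from the unique (and vacuously connected) $1\times 1$ zero matrix, whose set-valued statistics are all empty. Since the seed of $\mathbf{s}^+$ in position $(n,k)$ is $s^{(n+1)}_{k+1}$ and that of $\mathbf{t}^+$ in position $(n,k+1)$ is $t^{(n+1)}_{k+2}$, verifying this recurrence and boundary behaviour yields $[A^{\mathbf{s}^+,\mathbf{t}^+}]_{n,k}=C_{n+1,k+1}(\mathbf{Z})$. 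Throughout I reuse the partition $\mathfrak{P}_{n,k}=\bigcup_{r=1}^4\mathfrak{P}^{(r)}_{n,k}$ and the delete-last-row-and-column map $\pi\mapsto\pi'$, now applied to connected partial permutations of order $n+1$.

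The conceptual key is a transfer lemma comparing connectedness of $\pi$ (order $n+1$) with that of $\pi'$ (order $n$). First I would observe that for every $i\le n-1$ the top-left $i\times i$ block of $\pi$ coincides with that of $\pi'$, while for $i=n$ this block equals $\pi'$ itself. Hence the candidate cut indices $1\le i\le n-1$ are shared by $\pi$ and $\pi'$, and the only additional requirement for $\pi$ is that $i=n$ fail to be a cut index, i.e. that $\pi'$ not be a permutation matrix. This gives
$$\pi \text{ connected} \Longleftrightarrow \pi' \text{ connected and } \pi' \text{ not a permutation matrix}.$$
Since a partial permutation of order $n$ is a permutation matrix precisely when it has no zero row, the second clause is automatic whenever $\pi'$ has at least one zero row.

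Next I would run the four-case analysis on $\pi\in\mathfrak{C}_{n+1,k+1}$, reading off the number of zero rows of $\pi'$ in each case: $\pi'\in\mathfrak{P}_{n,k}$ for $\mathfrak{P}^{(1)}$, $\pi'\in\mathfrak{P}_{n,k+1}$ for $\mathfrak{P}^{(2)}$ and $\mathfrak{P}^{(3)}$, and $\pi'\in\mathfrak{P}_{n,k+2}$ for $\mathfrak{P}^{(4)}$ (matching the domains in hypotheses (1)--(3)). Because $k+1\ge 1$, in the fibres over $\mathfrak{P}^{(2)},\mathfrak{P}^{(3)},\mathfrak{P}^{(4)}$ the matrix $\pi'$ always carries a zero row, so by the lemma $\pi$ is connected iff $\pi'$ is; the only delicate case is $\mathfrak{P}^{(1)}$ with $k=0$, where $\pi'\in\mathfrak{P}_{n,0}$ is a permutation matrix and therefore contributes nothing, consistently with the vanishing boundary term $d_{n-1,-1}=0$. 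Restricting the sums in hypotheses (1)--(3), taken at the index $(n+1,k+1)$, to connected $\rho$ then produces exactly the three terms $C_{n,k}(\mathbf{Z})$, $s^{(n+1)}_{k+1}C_{n,k+1}(\mathbf{Z})$ and $t^{(n+1)}_{k+2}C_{n,k+2}(\mathbf{Z})$; here it is essential that, for $\rho$ connected, \emph{every} preimage in $\mathfrak{P}^{(2)}\cup\mathfrak{P}^{(3)}$ (resp.\ $\mathfrak{P}^{(1)}$, $\mathfrak{P}^{(4)}$) is connected, so the connected-restricted fibre sum coincides with the full fibre sum supplied by the hypothesis.

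Finally, for $C_{n,0}(\mathbf{Z})$ I would specialize the same analysis to genuine permutations ($n\ge 2$, no zero rows). Then $\mathfrak{P}^{(1)}_{n,0}$ and $\mathfrak{P}^{(3)}_{n,0}$ are empty, and for $\pi\in\mathfrak{P}^{(2)}_{n,0}$ one has $\pi'\in\mathfrak{P}_{n-1,0}$, a permutation matrix, so $i=n-1$ is a cut index and $\pi$ is never connected; thus only $\mathfrak{P}^{(4)}_{n,0}$ survives, where $\pi'\in\mathfrak{P}_{n-1,1}$ has one zero row (hence is not a permutation matrix) and $\pi$ is connected iff $\pi'$ is. Applying hypothesis (3) at index $(n,0)$ and summing over $\rho\in\mathfrak{C}_{n-1,1}$ gives $C_{n,0}(\mathbf{Z})=t^{(n)}_1 C_{n-1,1}(\mathbf{Z})$. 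I expect the main obstacle to be the transfer lemma together with the precise accounting of \emph{when} $\pi'$ degenerates into a permutation matrix, since this degeneration is exactly what forces particular cases to drop out and makes the connected count align with the simultaneous shift of both seed indices.
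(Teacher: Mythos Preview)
Your proposal is correct and follows essentially the same approach as the paper: both verify that $C_{n+1,k+1}(\mathbf{Z})$ satisfies the shifted recurrence by analyzing the four-piece decomposition $\mathfrak{P}_{n+1,k+1}=\bigcup_r\mathfrak{P}^{(r)}_{n+1,k+1}$ restricted to connected matrices, and both handle $C_{n,0}$ by observing that only the $\mathfrak{P}^{(4)}$ case survives. Your explicit transfer lemma (``$\pi$ connected $\Leftrightarrow$ $\pi'$ connected and $\pi'$ not a permutation matrix'') is a cleaner packaging of what the paper states case by case, and in fact sidesteps a small typo in the paper's proof.
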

\begin{proof}
It suffices to prove $C_{n+1,k+1}(\mathbf{Z})$ satisfies the recurrence. By definition
$C_{1,1}(\mathbf{z})=a_{1,1}=1.$
Now, it is easy to see that $\pi\in \mathfrak{P}_{n,1}$ is connected if and only if $\pi' \notin \mathfrak{P}_{n-1,0}$ and $\pi'$ is not connected, 
hence
\begin{eqnarray*}
C_{n+1,1}(\mathbf{Z}) &=& \sum_{\pi \in \mathfrak{C}_{n+1,1}}  \mathbf{Z}^{\mathbf{Stat}(\pi)}\\
&=& \sum_{\rho \in \mathfrak{C}_{n,1},  \pi\in \mathfrak{P}^{(2)}_{n+1,1} \cup \mathfrak{P}^{(3)}_{n+1,1}, \pi'=\rho}  \mathbf{Z}^{\mathbf{Stat}(\pi)} +\sum_{\rho \in \mathfrak{C}_{n,2}, \pi\in \mathfrak{P}^{(4)}_{n+1,1}, \pi'=\rho}  \mathbf{Z}^{\mathbf{Stat}(\pi)}\\
&=& s^{(n+1)}_1C_{n,1}(\mathbf{Z})+t^{(n+1)}_2C_{n,2}(\mathbf{Z}).
\end{eqnarray*}

For $k\ge 2$, $\pi\in \mathfrak{P}_{n,k}$ is connected if and only if $\pi'$ is connected, hence
$$C_{n+1,k+1}(\mathbf{Z})=C_{n,k}(\mathbf{Z})+ s^{(n+1)}_{k+1}C_{n,k+1}(\mathbf{Z})+t^{(n+1)}_{k+2}C_{n,k+2}(\mathbf{Z})$$
for $k\geq 1$ by a similar argument.
Therefore, $C_{n+1,k+1}(\mathbf{Z})$ satisfies the same recurrence of $a_{n,k}$ and $\left[A^{\mathbf{s}^+,\mathbf{t}^+}\right]_{n,k}=C_{n+1,k+1}(\mathbf{Z})$. 
For the second desired equation, it is easy to see that $\pi \in \mathfrak{C}_{n,0}$ iff $\pi\in \mathfrak{P}^{(4)}_{n,0}$ and $\pi' \in \mathfrak{C}_{n-1,1}$. Hence the result follows.
\end{proof}

We also have an integer-valued analogue of Theorem \ref{thm_connected}. The proof is almost the same so we omit it here. 
Note that Theorem \ref{thm_connected} does not imply Theorem \ref{thm_connected_int} because some integer-valued statistics like $\inv$ do not have a set-valued version which can be encoded in the extended seeds of an extended Catalan matrix. 
\begin{thm}\label{thm_connected_int}
Let $a_{n,k}$ be the entry of the Catalan matrix $A^{\mathbf{s}, \mathbf{t}}$ from the seeds $\mathbf{s}=(s_0,s_1,\ldots)$ and $\mathbf{t}=(t_1,t_2,\ldots)$ and $$a_{n,k}=\sum_{\pi\in \mathfrak{P}_{n,k}} \mathbf{z}^{\mathbf{stat}(\pi)}:= \sum_{\pi\in \mathfrak{P}_{n,k}}z_1^{\stat_1(\pi)}z_2^{\stat_2(\pi)}\cdots z_r^{\stat_r(\pi)}$$
with respect to statistics $(\stat_1, \stat_2,\dots ,\stat_r)$. Suppose the following hold:
\begin{enumerate}
\item For any $\rho \in \mathfrak{P}_{n-1,k-1}$, we have $\displaystyle \sum_{\pi\in \mathfrak{P}^{(1)}_{n,k},\, \pi'=\rho }\mathbf{z}^{\mathbf{stat}(\pi)}=\mathbf{z}^{\mathbf{stat}(\rho )}$,
\item For any $\rho \in \mathfrak{P}_{n-1,k}$, we have $\displaystyle \sum_{\pi\in \mathfrak{P}^{(2)}_{n,k} \cup \mathfrak{P}^{(3)}_{n,k},\, \pi'=\rho }\mathbf{z}^{\mathbf{stat}(\pi)}= s_k\mathbf{z}^{\mathbf{stat}(\rho )}$,
\item For any $\rho \in \mathfrak{P}_{n-1,k+1}$, we have $\displaystyle \sum_{\pi\in \mathfrak{P}^{(4)}_{n,k},\, \pi'=\rho } \mathbf{z}^{\mathbf{stat}(\pi)}=t_{k+1}\mathbf{z}^{\mathbf{stat}(\rho )}$.
\end{enumerate}
Let $\mathbf{s}^+=(s_1, s_2, \ldots)$ and $\mathbf{t}^+=(t_2, t_3,\ldots)$ and $A^{\mathbf{s}^+,\mathbf{t}^+}$ be the Catalan matrix generated.
Then
$$\left[A^{\mathbf{s}^+,\mathbf{t}^+}\right]_{n,k}=C_{n+1,k+1}(\mathbf{z})$$
and
$$C_{n,0}(\mathbf{z})=t_1C_{n-1,1}(\mathbf{z}).$$
\end{thm}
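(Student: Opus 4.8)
The plan is to transcribe the proof of Theorem~\ref{thm_connected} into the integer-valued setting, replacing every set-valued weight $\mathbf{Z}^{\mathbf{Stat}(\pi)}$ by the scalar weight $\mathbf{z}^{\mathbf{stat}(\pi)}$ and every extended seed by its scalar counterpart. Writing $a^+_{n,k}=\left[A^{\mathbf{s}^+,\mathbf{t}^+}\right]_{n,k}$, the entries of $A^{\mathbf{s}^+,\mathbf{t}^+}$ obey $a^+_{n,k}=a^+_{n-1,k-1}+s_{k+1}a^+_{n-1,k}+t_{k+2}a^+_{n-1,k+1}$, since $\mathbf{s}^+=(s_1,s_2,\ldots)$ and $\mathbf{t}^+=(t_2,t_3,\ldots)$. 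I would therefore prove that $C_{n+1,k+1}(\mathbf{z})$ satisfies this very recurrence; because $C_{1,1}(\mathbf{z})=a_{1,1}=1=a^+_{0,0}$, the identity $a^+_{n,k}=C_{n+1,k+1}(\mathbf{z})$ then follows by induction on $n$.

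The combinatorial input, identical to the set-valued case, is how connectivity behaves under deletion of the last row and column, so that $\pi'$ is the principal $(n-1)\times(n-1)$ submatrix of $\pi$. First I would observe that every proper principal permutation submatrix of $\pi$ already lies inside $\pi'$, the borderline index $i=n-1$ corresponding exactly to $\pi'$ being a permutation matrix; hence $\pi$ is disconnected if and only if $\pi'$ is disconnected or $\pi'$ is a permutation matrix, i.e. $\pi'\in\mathfrak{P}_{n-1,0}$. Two regimes follow. If $\pi$ has at least two zero rows then the condition $\pi'\in\mathfrak{P}_{n-1,0}$ is vacuous across all four blocks $\mathfrak{P}^{(r)}$, so such a $\pi$ is connected iff $\pi'$ is connected. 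If $\pi$ has a single zero row then connectivity additionally rules out the block $\mathfrak{P}^{(1)}$ (whose $\pi'$ is a permutation matrix), so $\pi$ is connected iff it lies in $\mathfrak{P}^{(2)}\cup\mathfrak{P}^{(3)}\cup\mathfrak{P}^{(4)}$ with $\pi'$ connected.

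Splitting $\mathfrak{C}_{n+1,k+1}$ along the four-block partition and applying hypotheses (1)--(3) blockwise then yields the recurrence. When the second index $k+1\ge 2$ (that is, $k\ge 1$), the blocks $\mathfrak{P}^{(1)}$, $\mathfrak{P}^{(2)}\cup\mathfrak{P}^{(3)}$, $\mathfrak{P}^{(4)}$ contribute respectively $C_{n,k}(\mathbf{z})$, $s_{k+1}C_{n,k+1}(\mathbf{z})$ and $t_{k+2}C_{n,k+2}(\mathbf{z})$, giving
$$C_{n+1,k+1}(\mathbf{z})=C_{n,k}(\mathbf{z})+s_{k+1}C_{n,k+1}(\mathbf{z})+t_{k+2}C_{n,k+2}(\mathbf{z}),$$
which under the identification $C_{n+1,k+1}=a^+_{n,k}$ is exactly the defining recurrence of $A^{\mathbf{s}^+,\mathbf{t}^+}$. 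For the boundary second index $1$ (that is, $k=0$), the single-zero-row dichotomy discards $\mathfrak{P}^{(1)}$ and gives $C_{n+1,1}(\mathbf{z})=s_1C_{n,1}(\mathbf{z})+t_2C_{n,2}(\mathbf{z})$, which matches the left-column recurrence $a^+_{n,0}=s_1a^+_{n-1,0}+t_2a^+_{n-1,1}$. Finally, a connected ordinary permutation must have $\pi_{nn}=0$ with a $1$ in both the last row and the last column, so $\pi\in\mathfrak{C}_{n,0}$ iff $\pi\in\mathfrak{P}^{(4)}_{n,0}$ with $\pi'\in\mathfrak{C}_{n-1,1}$; hypothesis (3) at $k=0$ then gives the second asserted identity $C_{n,0}(\mathbf{z})=t_1C_{n-1,1}(\mathbf{z})$.

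I expect the only delicate point to be the bookkeeping at the interface between the $k=0$ and $k\ge 1$ regimes: one must verify that connectivity genuinely forbids the block $\mathfrak{P}^{(1)}_{n,1}$ (equivalently that $\pi'\in\mathfrak{P}_{n-1,0}$ forces $\pi$ to be disconnected), and that the seed shift $s_\ell\mapsto s_{\ell+1}$, $t_\ell\mapsto t_{\ell+1}$ is aligned so that the boundary case reproduces the leftmost column of $A^{\mathbf{s}^+,\mathbf{t}^+}$ rather than its interior recurrence. Everything else is a direct rerun of the argument for Theorem~\ref{thm_connected}, with scalar weights in place of set-valued ones, which is why no new idea beyond the connectivity dichotomy is needed.
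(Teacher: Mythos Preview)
Your proposal is correct and follows exactly the approach the paper intends: the paper itself omits the proof of Theorem~\ref{thm_connected_int}, saying only that it is ``almost the same'' as that of Theorem~\ref{thm_connected}, and your write-up is precisely that transcription, with the same connectivity dichotomy ($\pi$ connected iff $\pi'$ connected and $\pi'\notin\mathfrak{P}_{n-1,0}$), the same four-block decomposition, and the same treatment of the boundary column $k=0$ and of $\mathfrak{C}_{n,0}$. If anything, your version spells out the seed shift and the $k=0$ boundary more carefully than the paper's proof of Theorem~\ref{thm_connected} does.
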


Proposition~\ref{prop_connected} is now immediate by Theorem~\ref{thm_connected_int}. Recall that in the proofs of the Theorem \ref{thm_par_triple} and Theorem \ref{thm_par_cyc_rlmin} 
we use the same partition of $\mathfrak{P}_{n,k}$ as above and the assumptions of Theorem \ref{thm_connected} or Theorem \ref{thm_connected_int} are also satisfied.
Hence we immediately have following corollaries over $\mathfrak{C}_{n,k}$, 
analogous to Theorem \ref{thm_par_triple}, Theorem \ref{thm_par_cyc_rlmin} and \ref{thm_symmetric} respectively.

\begin{cor}
Let the seed sequences be $s_0=(w+p(1+q))q$ and
$$(s_\ell,t_\ell)=\left(([\ell+1]_{w,q}+p[\ell+2]_q)q^{\ell+1},p[\ell+1]_{w,q}[\ell+1]_qq^{2\ell+1}\right)$$ for $\ell\geq 1$, then 
$a_{n,k}$ is the generating function of $\mathfrak{C}_{n+1,k+1}$ with respect to $\mathsf{rlmin}, \mathsf{inv}$ and $\mathsf{wex}$. Namely,
$$a_{n,k}=\sum_{\pi\in \mathfrak{C}_{n+1,k+1}}w^{\rlmin(\pi)}p^{\wex(\pi)}q^{\inv(\pi)}.$$
Moreover, we have
$$\sum_{\pi\in \mathfrak{C}_{n,0}}w^{\rlmin(\pi)} p^{\wex(\pi)}q^{\inv(\pi)}=wpq\sum_{\pi\in \mathfrak{C}_{n-1,1}}w^{\rlmin(\pi)} p^{\wex(\pi)}q^{\inv(\pi)}.$$
\end{cor}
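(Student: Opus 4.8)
The plan is to apply Theorem~\ref{thm_connected_int} directly, so the main task is to verify that the seed sequences in the corollary are precisely the shifted seeds $\mathbf{s}^+,\mathbf{t}^+$ coming from the seeds of Theorem~\ref{thm_par_triple}, and then to identify the second claimed equation with the relation $C_{n,0}(\mathbf{z})=t_1C_{n-1,1}(\mathbf{z})$. First I would recall from Theorem~\ref{thm_par_triple} that the generating function $\sum_{\pi\in \mathfrak{P}_{n,k}} w^{\rlmin(\pi)}p^{\wex(\pi)}q^{\inv(\pi)}$ is $a_{n,k}$ of the Catalan matrix with seeds $s_0=wp$ and $(s_\ell,t_\ell)=(([\ell]_{w,q}+p[\ell+1]_q)q^\ell, p[\ell]_{w,q}[\ell]_qq^{2\ell-1})$ for $\ell\ge 1$. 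The hypotheses (1)--(3) of Theorem~\ref{thm_connected_int} are exactly the three block identities $b^{(1)}_{n,k}=b_{n-1,k-1}$, $b^{(2)}_{n,k}+b^{(3)}_{n,k}=s_k b_{n-1,k}$ and $b^{(4)}_{n,k}=t_{k+1}b_{n-1,k+1}$ established inside the proof of Theorem~\ref{thm_par_triple}; I would note that the computations there are in fact made $\rho$-by-$\rho$ (the factors multiplying $b_{n-1,k}$ etc.\ are independent of the particular $\pi'=\rho$), so the per-fiber hypotheses hold verbatim.

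Next I would form the shifted seeds. By the recipe $\mathbf{s}^+=(s_1,s_2,\ldots)$ and $\mathbf{t}^+=(t_2,t_3,\ldots)$, the new seeds are $s^+_\ell=s_{\ell+1}$ and $t^+_\ell=t_{\ell+1}$. Substituting $\ell\mapsto \ell+1$ into the formulas of Theorem~\ref{thm_par_triple} gives, for $\ell\ge 0$,
\begin{equation*}
s^+_\ell=s_{\ell+1}=([\ell+1]_{w,q}+p[\ell+2]_q)q^{\ell+1},\qquad
t^+_\ell=t_{\ell+1}=p[\ell+1]_{w,q}[\ell+1]_qq^{2\ell+1},
\end{equation*}
which match the corollary's stated $(s_\ell,t_\ell)$ for $\ell\ge 1$, while the constant seed is $s^+_0=s_1=([1]_{w,q}+p[2]_q)q=(w+p(1+q))q$, again matching $s_0=(w+p(1+q))q$. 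This is just bookkeeping, but it is the crux of the identification: once the shifted seeds agree, Theorem~\ref{thm_connected_int} yields $[A^{\mathbf{s}^+,\mathbf{t}^+}]_{n,k}=C_{n+1,k+1}(\mathbf{z})$, which is exactly $a_{n,k}=\sum_{\pi\in\mathfrak{C}_{n+1,k+1}} w^{\rlmin(\pi)}p^{\wex(\pi)}q^{\inv(\pi)}$ with $\mathbf{z}=(w,p,q)$.

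For the second equation I would invoke the companion conclusion $C_{n,0}(\mathbf{z})=t_1 C_{n-1,1}(\mathbf{z})$ of Theorem~\ref{thm_connected_int}, using the \emph{original} first seed $t_1=p[1]_{w,q}[1]_q q=wpq$. This gives
\begin{equation*}
\sum_{\pi\in \mathfrak{C}_{n,0}} w^{\rlmin(\pi)}p^{\wex(\pi)}q^{\inv(\pi)}
= wpq\sum_{\pi\in \mathfrak{C}_{n-1,1}} w^{\rlmin(\pi)}p^{\wex(\pi)}q^{\inv(\pi)},
\end{equation*}
as claimed. The step I expect to require the most care is confirming that the three block computations inside the proof of Theorem~\ref{thm_par_triple} really do hold fiberwise over each fixed $\rho=\pi'$ rather than only after summing; concretely, one must check that the right-to-left-minimum, weak-excedance and inversion contributions of the appended last row/column depend only on $k$ (through the positions $j_1<\cdots$ and $i_1<\cdots$ and the factors $[k]_{w,q}q^k$, $pq^k[k]_q$, etc.) and not on the internal structure of $\rho$. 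Granting that—which is transparent from the local nature of the deletion/insertion argument—the remaining work is the routine index shift and the evaluation of $t_1$ displayed above.
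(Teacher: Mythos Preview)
Your proposal is correct and follows essentially the same approach as the paper: the corollary is stated there as an immediate consequence of Theorem~\ref{thm_connected_int} applied with the seeds of Theorem~\ref{thm_par_triple}, after noting that the partition used in the latter's proof verifies the fiberwise hypotheses. Your explicit index shift and evaluation of $t_1=wpq$ merely spell out in detail what the paper leaves implicit.
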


\begin{cor} Let the extended seed sequences be 
$$(s^{(i)}_\ell, t^{(i)}_\ell)=\left(\ell+a_i+w_i+(\ell+1)p_i, (\ell+b_i)(\ell+w_i)p_i\right).$$ Then 
$a_{n,k}$ of the extended Catalan matrix is the generating function of $\mathfrak{C}_{n+1,k+1}$ with respect to $\mathsf{Fix}, \mathsf{Cyc}_{\ge 2}$, $\mathsf{Rlmip}$ and $\Excl$. Namely,
$$a_{n,k}=\sum_{\pi\in \mathfrak{C}_{n+1,k+1}}\mathbf{a}^{\Fix(\pi)}\mathbf{b}^{\Cyc_{\ge 2}(\pi)}\mathbf{w}^{\Rlmip(\pi)}\mathbf{p}^{\Excl(\pi)}.$$
\end{cor}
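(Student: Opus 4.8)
The plan is to obtain this corollary as a direct application of Theorem~\ref{thm_connected}, fed by the six-variable set-valued enumerator of $\mathfrak{P}_{n,k}$ from Theorem~\ref{thm_par_cyc_rlmin}. Accordingly I would take $(\Stat_1,\dots,\Stat_6)=(\Fix,\Cyc_{\ge 2},\Rlmip,\Excl,\Sfix,\Scyc)$ and let $A^{\mathbf{s},\mathbf{t}}$ be the extended Catalan matrix with the seeds of Theorem~\ref{thm_par_cyc_rlmin}, so that $a_{n,k}=\sum_{\pi\in\mathfrak{P}_{n,k}}\mathbf{a}^{\Fix(\pi)}\mathbf{b}^{\Cyc_{\ge 2}(\pi)}\mathbf{w}^{\Rlmip(\pi)}\mathbf{p}^{\Excl(\pi)}\mathbf{u}^{\Sfix(\pi)}\mathbf{v}^{\Scyc(\pi)}$. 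The conclusion of Theorem~\ref{thm_connected} will then identify $[A^{\mathbf{s}^+,\mathbf{t}^+}]_{n,k}$ with the connected generating function $C_{n+1,k+1}(\mathbf{Z})$.

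First I would check the three hypotheses of Theorem~\ref{thm_connected}. No new computation is needed, since the proof of Theorem~\ref{thm_par_cyc_rlmin} already uses the partition $\mathfrak{P}_{n,k}=\cup_{r=1}^{4}\mathfrak{P}^{(r)}_{n,k}$ and argues termwise in $\pi'=\rho$. The relation $b^{(1)}_{n,k}=b_{n-1,k-1}$ supplies hypothesis~(1); for $k\ge 1$ the blocks $\mathfrak{P}^{(2)}_{n,k}$ and $\mathfrak{P}^{(3)}_{n,k}$ together scale each $\mathbf{Z}^{\mathbf{Stat}(\rho)}$ by $a_n+(w_n+k-1+kp_n)=s^{(n)}_k$, supplying hypothesis~(2); and $\mathfrak{P}^{(4)}_{n,k}$ scales it by $(b_n+k)(w_n+k)p_n=t^{(n)}_{k+1}$, supplying hypothesis~(3). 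With these in hand, Theorem~\ref{thm_connected} gives $[A^{\mathbf{s}^+,\mathbf{t}^+}]_{n,k}=C_{n+1,k+1}(\mathbf{Z})$.

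Next I would compute the shifted seeds $\mathbf{s}^+=(s^{(i+1)}_{\ell+1})$ and $\mathbf{t}^+=(t^{(i+1)}_{\ell+1})$. Plugging into the formulas of Theorem~\ref{thm_par_cyc_rlmin} and relabeling the superscript $i+1\mapsto i$ yields $s^{(i)}_\ell=\ell+a_i+w_i+(\ell+1)p_i$ and $t^{(i)}_\ell=(\ell+b_i)(\ell+w_i)p_i$, exactly the seeds claimed. I want to explain why $\mathbf{u}$ and $\mathbf{v}$ vanish: they occur in the seeds of Theorem~\ref{thm_par_cyc_rlmin} only through the special entries $s^{(i)}_0=a_iw_iu_i$ and $t^{(i)}_1=b_iw_ip_iv_i$, and these are precisely the entries deleted by the shift, because $\mathbf{s}^+$ starts at the $\ell=1$ entry of $\mathbf{s}$ and $\mathbf{t}^+$ starts at the $\ell=2$ entry of $\mathbf{t}$. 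Independently, this matches the combinatorial fact that every connected $\pi\in\mathfrak{C}_{n+1,k+1}$ satisfies $\Sfix(\pi)=\Scyc(\pi)=\emptyset$: a proper strong fixed point or strong cycle would give a principal permutation submatrix forbidden by connectedness, and the full index $n+1$ cannot qualify since $\pi$ has $k+1\ge 1$ zero rows and so is not a permutation matrix.

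The part I expect to need the most care is confirming that the hypotheses of Theorem~\ref{thm_connected} hold termwise in $\rho$ rather than merely at the level of the total sums $b_{n,k}$. I would secure this by rereading the placement analysis in the proof of Theorem~\ref{thm_par_cyc_rlmin}: for a fixed $\pi'=\rho$, each admissible position for the appended $1$ in the new last row or column changes $\Fix$, $\Cyc_{\ge 2}$, $\Rlmip$ and $\Excl$ by amounts determined solely by $k$ and the chosen position, independently of the interior of $\rho$, so the common scaling factors above are genuine. Granting this, the corollary is the stated specialization of Theorem~\ref{thm_connected}, with the trivial factor $\mathbf{u}^{\Sfix(\pi)}\mathbf{v}^{\Scyc(\pi)}$ suppressed.
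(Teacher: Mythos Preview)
Your approach is exactly the paper's: the corollary is presented there as an immediate consequence of Theorem~\ref{thm_connected} applied to the seeds of Theorem~\ref{thm_par_cyc_rlmin}, the paper having noted just before the corollaries that the termwise hypotheses (1)--(3) are satisfied by the case analysis in the proof of Theorem~\ref{thm_par_cyc_rlmin}. Your additional remarks explaining why $\mathbf{u}$ and $\mathbf{v}$ drop out---algebraically because the shift discards $s^{(i)}_0$ and $t^{(i)}_1$, and combinatorially because $\Sfix(\pi)=\Scyc(\pi)=\emptyset$ for every $\pi\in\mathfrak{C}_{n+1,k+1}$ with $k\ge 0$---go a bit beyond what the paper spells out but are correct and useful.
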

\begin{cor} Let the extended seed sequences be 
$$(s^{(i)}_\ell, t^{(i)}_\ell)=\left(\ell+a_i+w_i+(\ell+1)p_i, (\ell+a_i)(\ell+w_i)p_i\right).$$ Then 
$a_{n,k}$ of the extended Catalan matrix is the generating function of $\mathfrak{C}_{n+1,k+1}$ with respect to $ \mathsf{Cyc}$, $\mathsf{Rlmip}$ and $\Excl$. Namely,
$$a_{n,k}=\sum_{\pi\in \mathfrak{C}_{n+1,k+1}}\mathbf{a}^{\Cyc(\pi)}\mathbf{w}^{\Rlmip(\pi)}\mathbf{p}^{\Excl(\pi)}.$$
Moreover, we have
$$\sum_{\pi\in \mathfrak{C}_{n,0}}\mathbf{a}^{\Cyc(\pi)}\mathbf{w}^{\Rlmip(\pi)}\mathbf{p}^{\Excl(\pi)}=a_nw_np_n\sum_{\pi\in \mathfrak{C}_{n-1,1}}\mathbf{a}^{\Cyc(\pi)}\mathbf{w}^{\Rlmip(\pi)}\mathbf{p}^{\Excl(\pi)}.$$ 
\end{cor}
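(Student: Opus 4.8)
The plan is to obtain this as the $\Cyc$, $\Rlmip$, $\Excl$ specialization of the connected-permutation machinery, exactly parallel to the two preceding corollaries. The natural starting point is Corollary~\ref{cor_par_cyc_rlmin}, which already encodes $(\Cyc, \Rlmip, \Excl, \Sfix, \Scyc)$ over $\mathfrak{P}_{n,k}$ in the extended seeds. First I would set $u_i = v_i = 1$ for all $i$ to discard the two statistics $\Sfix$ and $\Scyc$; this leaves the encoding $a_{n,k} = \sum_{\pi \in \mathfrak{P}_{n,k}} \mathbf{a}^{\Cyc(\pi)}\mathbf{w}^{\Rlmip(\pi)}\mathbf{p}^{\Excl(\pi)}$ carried by the seeds $s^{(i)}_0 = a_i w_i$, $t^{(i)}_1 = a_i w_i p_i$, and $(s^{(i)}_\ell, t^{(i)}_{\ell+1}) = (\ell - 1 + a_i + w_i + \ell p_i,\ (a_i + \ell)(w_i + \ell)p_i)$ for $\ell \ge 1$. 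As is recorded in the text, the proof of Theorem~\ref{thm_par_cyc_rlmin} uses precisely the partition $\mathfrak{P}_{n,k} = \cup_{r=1}^4 \mathfrak{P}^{(r)}_{n,k}$, and its per-block computations refine to exactly hypotheses (1)--(3) of Theorem~\ref{thm_connected}; so these hypotheses hold for the specialized seeds and nothing new has to be checked here.

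With the hypotheses in hand I would simply invoke Theorem~\ref{thm_connected}. Its first conclusion gives $[A^{\mathbf{s}^+, \mathbf{t}^+}]_{n,k} = C_{n+1,k+1}(\mathbf{Z})$, so it remains to compute the shifted seeds $\mathbf{s}^+ = (s^{(i+1)}_{\ell+1})$ and $\mathbf{t}^+ = (t^{(i+1)}_{\ell+1})$ and match them against the ones in the statement. A direct substitution yields
$$[\mathbf{s}^+]^{(i)}_\ell = s^{(i+1)}_{\ell+1} = \ell + a_{i+1} + w_{i+1} + (\ell+1)p_{i+1}, \qquad [\mathbf{t}^+]^{(i)}_\ell = t^{(i+1)}_{\ell+1} = (\ell + a_{i+1})(\ell + w_{i+1})p_{i+1},$$
which are exactly the seeds displayed in the corollary once the formal parameters are relabelled $a_{i+1} \mapsto a_i$, $w_{i+1}\mapsto w_i$, $p_{i+1}\mapsto p_i$. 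This relabelling is harmless: in a connected permutation of length at least $2$ the position $1$ is never the maximum of a full cycle, since a fixed point at $1$ would make the top-left $1\times 1$ block a permutation submatrix and violate connectedness; hence the variable $a_1$ never occurs and the two indexings agree on the range of positions that actually contributes.

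Finally, the ``moreover'' identity is immediate from the second conclusion of Theorem~\ref{thm_connected}, namely $C_{n,0}(\mathbf{Z}) = t^{(n)}_1 C_{n-1,1}(\mathbf{Z})$: under the specialization $u=v=1$ (together with $a=b$) one has $t^{(n)}_1 = a_n w_n p_n$, which is precisely the claimed relation between the generating functions over $\mathfrak{C}_{n,0}$ and $\mathfrak{C}_{n-1,1}$. The only genuine obstacle in the whole argument is the index bookkeeping of the previous paragraph: one must be confident that the superscript shift in $\mathbf{s}^+, \mathbf{t}^+$ combined with the subscript shift in the formal parameters reproduces the stated seeds and the correct position-to-variable correspondence inside $C_{n+1,k+1}(\mathbf{Z})$. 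Everything else is the routine verification that a specialization of an already-proved encoding satisfies hypotheses that the text has already confirmed.
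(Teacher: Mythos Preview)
Your approach is the paper's: specialize Corollary~\ref{cor_par_cyc_rlmin} by $u_i=v_i=1$, note that the block-by-block computations in the proof of Theorem~\ref{thm_par_cyc_rlmin} already supply hypotheses (1)--(3) of Theorem~\ref{thm_connected}, and invoke that theorem together with its second conclusion for the ``moreover'' identity. The paper records this corollary without proof on exactly these grounds.

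The one slip is in your index-bookkeeping paragraph. Relabelling the formal variables $a_{i+1}\mapsto a_i$ in the seeds simultaneously relabels the resulting matrix entries: it would send $\mathbf{a}^{\Cyc(\pi)}=\prod_{j\in\Cyc(\pi)}a_j$ to $\prod_{j\in\Cyc(\pi)}a_{j-1}$, a different polynomial. Knowing that $a_1$ is absent (and you should also note $1\notin\Rlmip(\pi)$ and $1\notin\Excl(\pi)$ for connected $\pi$ of length~$\ge 2$, both equally immediate) only tells you both polynomials live in the ring generated by $a_2,a_3,\dots,w_2,\dots,p_2,\dots$; it does not make the two indexings ``agree.'' What your computation actually establishes, correctly, is that the seeds $s^{(i+1)}_{\ell+1}=\ell+a_{i+1}+w_{i+1}+(\ell+1)p_{i+1}$ and $t^{(i+1)}_{\ell+1}=(\ell+a_{i+1})(\ell+w_{i+1})p_{i+1}$ encode $C_{n+1,k+1}(\mathbf{Z})$, exactly as Theorem~\ref{thm_connected} promises; the subscript $i$ rather than $i+1$ in the corollary's displayed seed formula is a cosmetic imprecision in the paper, not a defect in your derivation.
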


Again, since $a_i$ and  $w_i$ are symmetric we have the following.
\begin{cor}
For any $n,k\geq 0$, we have 
$$(\Cyc,\Rlmip,\Excl)\sim(\Rlmip,\Cyc,\Excl)$$
over $\mathfrak{C}_{n,k}$.
In particular, $\Cyc$ and $\Rlmip$ have a symmetric joint distribution over $\mathfrak{C}_{n,k}$ for $n,k\ge 0$.
\end{cor}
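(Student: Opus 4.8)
The plan is to deduce the corollary directly from the symmetry of the extended seeds in the preceding corollary, in exactly the same manner that Theorem~\ref{thm_symmetric} was obtained from Corollary~\ref{cor_par_cyc_rlmin}. The crucial observation is that in the extended seeds
$$(s^{(i)}_\ell, t^{(i)}_\ell)=\left(\ell+a_i+w_i+(\ell+1)p_i,\, (\ell+a_i)(\ell+w_i)p_i\right),$$
the variables $a_i$ and $w_i$ occur only through the symmetric combinations $a_i+w_i$ and $(\ell+a_i)(\ell+w_i)$. Consequently, exchanging $a_i\leftrightarrow w_i$ for every $i$ leaves both extended seed sequences, and hence the whole recurrence defining the extended Catalan matrix, unchanged.

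First I would record that, because each entry $a_{n,k}$ of the extended Catalan matrix is determined by the seeds through the defining recurrence alone, the entry is invariant under the simultaneous swap $a_i\leftrightarrow w_i$. For $n,k\ge 1$ the preceding corollary identifies this entry with a generating function over connected partial permutations:
$$a_{n-1,k-1}=\sum_{\pi\in \mathfrak{C}_{n,k}}\mathbf{a}^{\Cyc(\pi)}\mathbf{w}^{\Rlmip(\pi)}\mathbf{p}^{\Excl(\pi)}.$$
Applying the swap to the right-hand side replaces it by $\sum_{\pi\in \mathfrak{C}_{n,k}}\mathbf{w}^{\Cyc(\pi)}\mathbf{a}^{\Rlmip(\pi)}\mathbf{p}^{\Excl(\pi)}$, and the equality of these two polynomials is precisely the assertion $(\Cyc,\Rlmip,\Excl)\sim(\Rlmip,\Cyc,\Excl)$ over $\mathfrak{C}_{n,k}$.

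To cover the boundary column $k=0$ I would invoke the ``Moreover'' relation of the preceding corollary,
$$\sum_{\pi\in \mathfrak{C}_{n,0}}\mathbf{a}^{\Cyc(\pi)}\mathbf{w}^{\Rlmip(\pi)}\mathbf{p}^{\Excl(\pi)}=a_nw_np_n\sum_{\pi\in \mathfrak{C}_{n-1,1}}\mathbf{a}^{\Cyc(\pi)}\mathbf{w}^{\Rlmip(\pi)}\mathbf{p}^{\Excl(\pi)}.$$
The prefactor $a_nw_np_n$ is itself symmetric under $a_n\leftrightarrow w_n$, and the $\mathfrak{C}_{n-1,1}$ generating function is symmetric by the case just treated; hence the $\mathfrak{C}_{n,0}$ generating function is symmetric as well. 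The case $n=k=0$ is trivial. Finally, specializing all $p_i=1$ (and then $a_i=a$, $w_i=w$) collapses the three-variable statement to the ``In particular'' claim $(\Cyc,\Rlmip)\sim(\Rlmip,\Cyc)$.

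I do not expect a genuine obstacle: the entire content of the proof is the $a_i\leftrightarrow w_i$ symmetry of the seeds, already exploited for Theorem~\ref{thm_symmetric}. The only step demanding care is the bookkeeping at $k=0$, where the recursive ``Moreover'' relation must be used together with the symmetry of its prefactor.
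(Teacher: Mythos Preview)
Your proposal is correct and follows essentially the same approach as the paper, which simply notes that since $a_i$ and $w_i$ appear symmetrically in the extended seeds of the preceding corollary, the result follows. Your write-up is in fact more careful than the paper's one-line justification, explicitly handling the $k=0$ boundary via the ``Moreover'' relation.
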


\section{Cycle-up-down partial permutations}

A permutation $\pi\in \mathfrak{S}_n$ is called {\it cycle-up-down} if in each of its cycle $(i_1 i_2 i_3\dots)$ with $i_1$ being the smallest element then the elements of the cycle form a zig-zag pattern $$i_1<i_2>i_3<\cdots .$$
Let $\mathfrak{U}_n$ be the set of cycle-up-down permutations in $\mathfrak{S}_n$.
Deutsch and Elizalde \cite{Deutsch_Elizalde_11} proved that $|\mathfrak{U}_n|$ is the Euler number $E_{n+1}$, which counts the up-down permutations $\pi$
with $\pi_1>\pi_2<\pi_3 \dots$. The first values of $\{E_n\}_{n\ge 0}$
are $1, 1,1,2,5,16,61,272,\dots$

The counting of cycle-up-down permutations by cycles is especially interesting. Let $u_{n,j}$ be the number of cycle-up-down permutations on $[n]$ with $j$ cycles.  
\begin{thm}\label{deutsch} \cite{Deutsch_Elizalde_11}
We have
$$
\sum_{n\geq 0}\left(\sum_{\pi \in \mathfrak{U}_n}q^{\fix(\pi)}t^{\cyc_{\geq 2}(\pi)}\right) \frac{z^n}{n!}=
\frac{e^{(q-t)z}}{(1-\sin z)^{t}}.
$$
In particular, 
$$\sum_{n\geq 0}\left(\sum_{j=1}^n u_{n,j}t^j\right)\frac{z^n}{n!}=\frac{1}{(1-\sin z)^t}.
$$
\end{thm}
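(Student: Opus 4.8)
The plan is to prove this via the exponential formula for permutations decomposed into cycles, viewing a cycle-up-down permutation as a set of ``connected'' pieces, each piece being a single up-down cycle acting on a block of labels. Writing $T(z)$ for the exponential generating function of one weighted up-down cycle, the exponential formula then gives that the full bivariate generating function on the left-hand side is $\exp\big(T(z)\big)$, since distinct cycles occupy disjoint label sets.

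First I would count the up-down cycles of a fixed length $m$. Rooting each cycle at its smallest element $i_1$ makes the first constraint $i_1<i_2$ automatic, so a length-$m$ up-down cycle is determined by placing the remaining $m-1$ labels in positions $i_2,i_3,\dots,i_m$ subject to $i_2>i_3<i_4>\cdots$; this is precisely a down-up alternating permutation of $m-1$ letters, enumerated by the Euler number $E_{m-1}$. For $m=1$ the only cycle is a fixed point. Hence the number of up-down cycles on a given $m$-set is $1$ when $m=1$ and $E_{m-1}$ when $m\ge 2$.

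Next I would assemble $T(z)$, weighting fixed points by $q$ (via $\fix$) and longer cycles by $t$ (via $\cyc_{\ge 2}$):
$$T(z)=qz+t\sum_{m\geq 2}E_{m-1}\frac{z^m}{m!}.$$
The analytic heart is the identity
$$\sum_{m\geq 1}E_{m-1}\frac{z^m}{m!}=\log\frac{1}{1-\sin z},$$
which I would derive from the classical Euler-number generating function $\sum_{n\geq 0}E_n\frac{z^n}{n!}=\sec z+\tan z$, rewritten as $\sec z+\tan z=\frac{1+\sin z}{\cos z}=\frac{\cos z}{1-\sin z}$, and then observing that this equals $\frac{d}{dz}\log\frac{1}{1-\sin z}$; integrating from $0$ and matching constant terms yields the displayed identity.

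Finally, since $\sum_{m\geq 2}E_{m-1}\frac{z^m}{m!}=\log\frac{1}{1-\sin z}-z$, the exponential formula gives
$$\sum_{n\geq 0}\left(\sum_{\pi\in\mathfrak{U}_n}q^{\fix(\pi)}t^{\cyc_{\geq 2}(\pi)}\right)\frac{z^n}{n!}=\exp\!\left(qz+t\log\frac{1}{1-\sin z}-tz\right)=\frac{e^{(q-t)z}}{(1-\sin z)^{t}},$$
which is the first formula; specializing $q=t$ (so that every cycle, including fixed points, is marked by $t$) collapses $e^{(q-t)z}$ to $1$ and yields the second. The main obstacle is the combinatorial count in the second step: fixing the correct canonical rooting of a cyclic alternating word and verifying the bijection with down-up permutations counted by $E_{m-1}$. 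Once that count is secured, the remaining generating-function manipulations are routine.
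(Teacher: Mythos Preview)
Your argument is correct. The exponential formula applies because a cycle-up-down permutation decomposes uniquely into its up-down cycles; your count of up-down cycles on an $m$-set as $E_{m-1}$ via the canonical rooting at the minimum is standard and valid, and the generating-function identity $\sum_{m\ge 1}E_{m-1}\frac{z^m}{m!}=-\log(1-\sin z)$ follows exactly as you say from integrating $\sec z+\tan z=\frac{\cos z}{1-\sin z}$.

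However, your route is genuinely different from what the paper does. The paper states Theorem~\ref{deutsch} as a citation from Deutsch--Elizalde and does not give a direct proof; instead it recovers it as the $k=0$ case of Theorem~\ref{thm_gen_cyc}. That theorem is proved by first establishing combinatorially (Theorem~\ref{thm_par_cyc}) that the seeds $(s_\ell,t_\ell)=(\ell+q,\tfrac{1}{2}\ell(\ell-1+2t))$ produce a Catalan matrix whose $(n,k)$-entry is $\sum_{\pi\in\mathfrak{U}_{n,k}}q^{\fix(\pi)}t^{\cyc_{\ge 2}(\pi)}$, and then invoking the Sheffer-matrix machinery (Corollary~\ref{cor_F}) to solve the resulting differential equations $F'=1+F+\tfrac{1}{2}F^2$ and $B'=qB+tBF$, obtaining $B(z)=e^{(q-t)z}/(1-\sin z)^t$. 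Your exponential-formula proof is more elementary and self-contained, but it is confined to ordinary permutations; the paper's Catalan-matrix approach is heavier but yields the column generating functions $\Lambda_k(z)$ for all $k$ simultaneously, which is the point of the surrounding section on partial permutations.
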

Then $u_{n,j}$ is also the number of Andr\'e Permutations (of both types I and II) on $[n+1]$ with $j+1$ right-to-left minimums \cite{Disanto_14}. In graph theory, it is also known that $(-1)^{n-j}u_{n,j}$ 
is the coefficient of $x^j$ of the co-adjoint polynomial of the complete graph of order $n$~\cite{Csikvari_16}. In this section we generalize the concept of cycle-up-down permutations and above theorem to partial permutations.

\subsection{cycle-up-down partial permutations}
We say $\pi\in \mathfrak{P}_{n,k}$ is cycle-up-down if for each full cycle $(a_1a_2\ldots)$, $a_1$ is the smallest element and $a_1<a_2>a_3<\cdots$;
and for each partial cycle $(a_1a_2\ldots X)$ there is $a_1<a_2>a_3<\cdots$. Note that in a partial cycle $a_1$ is not necessarily the smallest element.
Let $\mathfrak{U}_{n,k}$ be the set of cycle-up-down partial permutations in $\mathfrak{P}_k$. 

Our first result reveals that the generating function of $\mathfrak{U}_{n,k}$ with respect to fixed points and longer cycles can be encoded in seeds.

\begin{thm}\label{thm_par_cyc}
Let the seeds  be $$(s_\ell,t_\ell)=\left(\ell+q,\frac{1}{2} \ell(\ell-1+2t) \right),$$ then 
$a_{n,k}$ is the generating function of cycle-up-down partial permutations with respect to
$\fix$ and $\cyc_{\ge 2}$. Namely,
$$a_{n,k}=\sum_{\pi\in \mathfrak{U}_{n,k}}q^{\fix(\pi)}t^{\cyc_{\geq 2}(\pi)}.$$
\end{thm}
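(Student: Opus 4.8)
The plan is to show that the generating function
$$b_{n,k}:=\sum_{\pi\in \mathfrak{U}_{n,k}}q^{\fix(\pi)}t^{\cyc_{\ge 2}(\pi)}$$
satisfies the same recurrence as $a_{n,k}$, i.e. $b_{n,k}=b_{n-1,k-1}+(k+q)b_{n-1,k}+\tfrac12(k+1)(k+2t)\,b_{n,k+1}$ (reading off $s_k=k+q$ and $t_{k+1}=\tfrac12(k+1)(k+2t)$), with the boundary value $b_{0,0}=1$. Following the template of Theorem~\ref{thm_par_triple}, I would fix $\pi\in\mathfrak{U}_{n,k}$ and analyze how the largest entry $n$ sits in the cycle notation, classifying by whether $n$ is a fixed point, a deletable endpoint of a full cycle, or an endpoint of a partial cycle. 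The goal is to set up a deletion map that removes $n$ and lands in $\mathfrak{U}_{n-1,k-1}$, $\mathfrak{U}_{n-1,k}$, or $\mathfrak{U}_{n-1,k+1}$, weighted correctly so as to reproduce each term of the recurrence.

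The key subtlety, and the heart of the argument, is the zig-zag (up-down) constraint, which dictates exactly \emph{where} $n$ may legally be inserted into a cycle-up-down partial permutation of order $n-1$. Since $n$ is the largest element, in a full cycle $(a_1<a_2>a_3<\cdots)$ it can only occupy an ``up-peak'' position (an even position $a_2, a_4,\ldots$), never a valley, and in a partial cycle $(a_1a_2\cdots X)$ the admissible slots are likewise constrained by the pattern $a_1<a_2>a_3<\cdots$. First I would count, for a fixed target permutation $\rho$ of order $n-1$ with a given number of cycles of each length, exactly how many up-down partial permutations of order $n$ reduce to $\rho$ upon deleting $n$. I expect the term $b_{n-1,k-1}$ to come from $\pi\in\mathfrak{P}^{(1)}_{n,k}$ (appending a zero row/column, so $n$ becomes a new partial cycle $X$ — contributing nothing to $\fix$ or $\cyc_{\ge2}$), the factor $k+q$ to split as $q$ for the fixed-point case $\pi_{nn}=1$ plus $k$ for inserting $n$ as a peak into one of the $k$ existing partial cycles, and the quadratic factor $\tfrac12(k+1)(k+2t)$ to arise from inserting $n$ into a full cycle (the $t$ tracking the creation or lengthening of a cycle of length $\ge 2$).

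The main obstacle will be the precise \textbf{enumeration of admissible insertion positions} for $n$ subject to the up-down pattern, together with verifying that the resulting image is again cycle-up-down and that the factor $\tfrac12$ and the linear-in-$t$ shape $(k+2t)$ emerge correctly. Concretely, a cycle (full or partial) of length $m$ has roughly $m/2$ peak-positions into which $n$ may be slotted while preserving the zig-zag; summing these counts over all cycles and reconciling the total against $\tfrac12(k+1)(k+2t)$ — where $t$ weights the $\cyc_{\ge 2}$ contribution and the pure combinatorial count of slots gives the polynomial in $k$ — is where the delicate bookkeeping lies. I would handle the cases $k=0$ and $k\ge 1$ separately, as in Theorem~\ref{thm_par_triple}, since the behavior of the new largest element as a potential fixed point versus an entrant into a partial cycle differs at the boundary. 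Once the four-way partition is shown to contribute $b_{n-1,k-1}$, $q\,b_{n-1,k}$, $k\,b_{n-1,k}$, and $\tfrac12(k+1)(k+2t)\,b_{n-1,k+1}$ respectively, the recurrence matches and the theorem follows by induction.
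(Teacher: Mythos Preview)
Your high-level plan (verify the Catalan recurrence by tracking the position of $n$) matches the paper's, but the mechanism you propose will not produce the clean factors $k$ and $\tfrac12(k+1)(k+2t)$, and two concrete problems would block the argument.

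First, the ``insert $n$ at a peak'' idea does not give a bijection with $\mathfrak{U}_{n-1,\ast}$. In an up-down cycle $(a_1<a_2>a_3<\cdots)$, inserting $n$ anywhere except at the very end shifts all subsequent entries by one slot and flips their up/down roles, destroying the pattern. Consequently the only legal insertion of $n$ into a partial cycle $(a_1\cdots a_rX)$ is at the end, and only when $r$ is odd. Hence the number of legal insertions is the number of partial cycles of \emph{odd} length, not $k$; your factor $k$ is not what a direct count yields. The paper circumvents this with a \emph{local complement} bijection: insert $n$ at the \emph{first} position of a chosen partial cycle and then replace every entry of that cycle by its complement within the set of its entries. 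This forces the result to be up-down regardless of the original parity and gives exactly $k$ preimages, which is the missing idea.

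Second, your reading of the $t_{k+1}$ term is off. Passing from $b_{n-1,k+1}$ to $b_{n,k}$ decreases the number of partial cycles by one, so ``inserting $n$ into a full cycle'' cannot be the source (that would leave $k$ unchanged). The paper splits this contribution in two: (i) choose one of the $k+1$ partial cycles, insert $n$ via local complement and \emph{close} it into a full cycle, giving $(k+1)t$; (ii) choose an ordered pair of distinct partial cycles with $\min(c_1)>\min(c_2)$ and merge them through $n$ (with a conditional local complement on the second block), giving $\binom{k+1}{2}$. Together these yield $(k+1)t+\binom{k+1}{2}=\tfrac12(k+1)(k+2t)$. This requires a \emph{five}-way partition of $\mathfrak{U}_{n,k}$ (distinguishing, in the partial-cycle case, whether the first entry is the minimum), not four.
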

\begin{proof}
Let $$b_{n,k}=\sum_{\pi\in \mathfrak{U}_{n,k}}q^{\fix(\pi)} t^{\cyc_{\geq 2}(\pi)}.$$ 
Again it suffices to show that it satisfies the same recurrence as $a_{n,k}$ does. It is clear $b_{0,0}=1$. 
This time we partition $\mathfrak{U}_{n,k}$ into five disjoint sets according to the cycle $\hat c$ containing $n$:
\begin{itemize}
\item $\mathfrak{U}^{(1)}_{n,k}$: $\hat c=(nX)$.
\item $\mathfrak{U}^{(2)}_{n,k}$: $\hat c=(n)$.
\item $\mathfrak{U}^{(3)}_{n,k}$: $\hat c=(a_1a_2\dots a_rX)$, $r\ge 2$, and $a_1$ is the smallest element in $\hat c$.
\item $\mathfrak{U}^{(4)}_{n,k}$: $\hat c=(a_1a_2\dots a_r)$, $r\ge 2$. (Note that $a_1$ is the smallest element in $\hat c$ by definition.)
\item $\mathfrak{U}^{(5)}_{n,k}$: $\hat c=(a_1a_2\dots a_rX)$, $r\ge 2$, and $a_1$ is not the smallest element in $\hat c$.
\end{itemize}
Let $b_{n,k}^{(i)}=\sum_{\pi\in \mathfrak{U}^{(i)}_{n,k}}q^{\fix (\pi)} t^{\cyc_{\geq 2} (\pi)}$. 
In the following we shall prove that
\begin{enumerate}
\item $b_{n,k}^{(1)}= b_{n-1,k-1}$, 
\item $b_{n,k}^{(2)}= q\cdot b_{n-1,k}$, 
\item $b_{n,k}^{(3)}= k\cdot b_{n-1,k}$,
\item $b_{n,k}^{(4)}= (k+1)\cdot t\cdot b_{n-1,k+1}$,
\item $b_{n,k}^{(5)}= {k+1\choose 2} b_{n-1,k+1}.$
\end{enumerate}

The cases (1) and (2) are trivial.  For (3), we present a bijection $\phi$ between the 
set  $$S_{n-1,k}=\{(\pi, c): \pi\in \mathfrak{S}_{n-1,k} \text{ and $c$ is a partial cycle of $\pi$} \}$$
and the set $\mathfrak{U}^{(3)}_{n,k}$. 
We need the concept of \emph{local complement}: for $\alpha_i\in \{\alpha_1,\dots , \alpha_r\}$ with $\alpha_1<\alpha_2 <\dots <\alpha_r$, 
the local complement of $\alpha_i$ with respect to this set is $\alpha_{r+1-i}$. 
Given $(\pi, c) \in S_{n-1,k}$, with the set of entries in $c$ being $\{\alpha_1,\dots , \alpha_r\}$, we perform the following steps and obtain an element in $\mathfrak{U}^{(3)}_{n,k}$. 
\begin{enumerate}
\item[(i)] insert $n$ into the first place of $c$.
\item[(ii)] replace every entry $i$ of this cycle by its local complement with respect to $\{n, \alpha_1,\dots , \alpha_r\}$, while keep all other cycles intact.
\end{enumerate}
For example, for $n=5$ and $c=(143\mathsf{X})$, we obtain $(1534\mathsf{X})$. This is easily seen a bijection and 
$b_{n,k}^{(3)}= k\cdot b_{n-1,k},$ as 
$$|S_{n-1,k}|=k\cdot b_{n-1,k}$$ and the fact that $\phi((\pi,c))$ and $\pi$ have the same full cycles.

The case (4) can be done similarly from the bijection 
$\psi: S_{n-1,k+1} \to \mathfrak{U}^{(4)}_{n,k}$. However, 
after (ii) above we also erase $X$ and result in a full cycle. 
Then one can check that there is one more full cycles in $\psi((\pi,c))$ than $\pi$ and $$|S_{n-1,k+1}|=(k+1)b_{n-1,k+1}.$$ 
Hence $b_{n,k}^{(4)}= (k+1)\cdot t\cdot b_{n-1,k+1}.$

For (5), let $\min(c)$ denote the smallest element of a cycle $c$. We give a bijection $\varphi$ between  the set 
$$T_{n-1,k+1}=\{(\pi, c_1, c_2): \pi \in \mathfrak{S}_{n-1,k+1}, \text{$c_1,c_2$ are partial cycles of $\pi$, and $\min(c_1)>\min(c_2)$} \}$$
and $\mathfrak{U}^{(5)}_{n,k}$.
Given $(\pi, c_1,c_2) \in T_{n-1,k+1}$ with $c_1=(\alpha_1\alpha_2\dots \alpha_r\mathsf{X})$ and $c_2=(\beta_1\beta_2\dots \beta_s\mathsf{X})$ we perform the following steps and obtain an element in $\mathfrak{U}^{(5)}_{n,k}$:
\begin{enumerate}
\item[(i)] Merge $c_1, c_2, n$ into a partial cycle $(\alpha_1\cdots \alpha_r n \beta_1\cdots \beta_{s}\mathsf{X})$. 
\item[(ii)] If $\alpha_{r-1}<\alpha_{r}$, then replace every element of $\beta_i$ and $n$ of this new cycle with its local complement with respect to the set $\{n,\beta_1\cdots \beta_{s}\}$ and leave other cycles intact.  Otherwise we do nothing.
\end{enumerate}
For example, let $n=9$. From $c_1=(273\mathsf{X})$ and $c_2=(16\mathsf{X})$ we obtain $(273916\mathsf{X})$ after two steps; while
from $c_1=(2734\mathsf{X})$ and $c_2=(16\mathsf{X})$ we obtain $(2734916\mathsf{X})$ after the first step, then $(2734196\mathsf{X})$ after the second.
The inverse of this bijection is just to reverse the steps depending on the order of $n$ and the minimum in the resulting partial cycle. Now it is easy to check that
$b_{n,k}^{(5)}={k+1\choose 2}b_{n-1,k+1}$.

As $b_{n,k}=b_{n,k}^{(1)}+b_{n,k}^{(2)}+\dots+b_{n,k}^{(5)}$, the theorem is proved by combining everything above.
\end{proof}

Let $q=t$ we obtain the seeds for the generating function of cycle-up-down permutations enumerated by cycles.
\begin{cor}\label{udcycle}
Let the seeds be $(s_\ell,t_\ell)=\left(\ell+q, \frac{1}{2}\ell(\ell-1+2q)\right)$, then 
$$a_{n,k}=\sum_{\pi\in \mathfrak{U}_{n,k}}q^{\cyc(\pi)}.$$
In particular, $a_{n,0}=\sum_{j=1}^nu_{n,j}q^j$.
\end{cor}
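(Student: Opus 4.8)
The plan is to obtain Corollary~\ref{udcycle} as a direct specialization of Theorem~\ref{thm_par_cyc}, obtained by identifying the two cycle-marking variables. First I would substitute $t=q$ into the seed sequences of Theorem~\ref{thm_par_cyc}. Since $s_\ell=\ell+q$ already involves only $q$, it is unchanged, while $t_\ell=\frac{1}{2}\ell(\ell-1+2t)$ becomes $\frac{1}{2}\ell(\ell-1+2q)$; this is precisely the pair of seeds appearing in the statement of the corollary. Consequently the entry $a_{n,k}$ of the resulting Catalan matrix equals the right-hand side of Theorem~\ref{thm_par_cyc} evaluated at $t=q$.

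Next I would rewrite that specialized generating function. By Theorem~\ref{thm_par_cyc} with $t=q$,
$$
a_{n,k}=\sum_{\pi\in\mathfrak{U}_{n,k}}q^{\fix(\pi)}q^{\cyc_{\ge 2}(\pi)}
=\sum_{\pi\in\mathfrak{U}_{n,k}}q^{\fix(\pi)+\cyc_{\ge 2}(\pi)}.
$$
The key observation is that every full cycle of a partial permutation $\pi$ is either a fixed point (a full cycle of length $1$) or a full cycle of length at least $2$, and these two families are disjoint and together exhaust all full cycles. Hence $\cyc(\pi)=\fix(\pi)+\cyc_{\ge 2}(\pi)$, and the exponents combine to give $a_{n,k}=\sum_{\pi\in\mathfrak{U}_{n,k}}q^{\cyc(\pi)}$, as claimed.

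For the ``in particular'' assertion I would set $k=0$. Then $\mathfrak{U}_{n,0}$ consists of cycle-up-down partial permutations with no zero rows, which are exactly the ordinary cycle-up-down permutations $\mathfrak{U}_n$; for such permutations all cycles are full, so $\cyc(\pi)$ is the ordinary number of cycles. Grouping the sum according to the number of cycles $j$ and recalling that $u_{n,j}$ counts the cycle-up-down permutations on $[n]$ with $j$ cycles yields
$$
a_{n,0}=\sum_{\pi\in\mathfrak{U}_n}q^{\cyc(\pi)}=\sum_{j=1}^n u_{n,j}q^j.
$$
I do not expect any genuine obstacle here, since the argument is a pure specialization of the already-established recurrence; the only points requiring a moment's care are verifying the identity $\cyc=\fix+\cyc_{\ge 2}$ and checking that the seed $t_\ell$ transforms correctly under $t=q$ while $s_\ell$ stays fixed.
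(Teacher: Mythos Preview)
Your proposal is correct and follows exactly the same route as the paper: the corollary is obtained from Theorem~\ref{thm_par_cyc} by the substitution $t=q$, using $\cyc=\fix+\cyc_{\ge 2}$, and the $k=0$ case is then read off from the definition of $u_{n,j}$.
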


\subsection{Generating function for cycles}
Now we generalize Theorem~\ref{deutsch} to partial permutations.
Let $a_{n,k}=\sum_{\pi\in \mathfrak{U}_{n,k}}q^{\fix(\pi)}t^{\cyc_{\geq 2}(\pi)}$ and
$$\Lambda_k(z):=\sum_{n\geq 0}a_{n,k}\frac{z^n}{n!}$$ be the exponential function of the sequences of the $k$-th column.

\begin{thm}\label{thm_gen_cyc}
For each $k$, we have
$$\Lambda_k(z)= \frac{e^{(q-t)z}}{(1-\sin z)^t}\cdot  \frac{1}{k!} \cdot \left (\frac{\cos z}{1-\sin z}-1\right)^k.$$
\end{thm}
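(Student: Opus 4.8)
The plan is to turn the recurrence of Theorem~\ref{thm_par_cyc} into an infinite system of linear ODEs for the column generating functions $\Lambda_k$, and then verify that the proposed closed form solves it. With the seeds $(s_\ell,t_\ell)=(\ell+q,\frac12\ell(\ell-1+2t))$ the recurrence reads $a_{n,k}=a_{n-1,k-1}+(k+q)a_{n-1,k}+\frac12(k+1)(k+2t)a_{n-1,k+1}$. Multiplying by $z^{n-1}/(n-1)!$, summing over $n\ge 1$, and using $\frac{d}{dz}\Lambda_k=\sum_{n\ge 1}a_{n,k}z^{n-1}/(n-1)!$, I obtain
\begin{equation}\label{eq_ode_sys}
\Lambda_k'=\Lambda_{k-1}+(k+q)\Lambda_k+\tfrac12(k+1)(k+2t)\Lambda_{k+1},
\end{equation}
with the convention $\Lambda_{-1}=0$ and initial data $\Lambda_k(0)=a_{0,k}=\delta_{k,0}$. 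Reading \eqref{eq_ode_sys} coefficient-by-coefficient gives back the recurrence verbatim; since row $n-1$ determines row $n$ and only finitely many entries in each row are nonzero, a family of power series satisfying \eqref{eq_ode_sys} together with the initial data is forced (by induction on $n$) to coincide with the $\Lambda_k$. Hence it suffices to check that the claimed formula satisfies \eqref{eq_ode_sys} and the initial conditions.

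Next I would isolate the two building blocks $F(z)=\dfrac{e^{(q-t)z}}{(1-\sin z)^t}$ and $G(z)=\dfrac{\cos z}{1-\sin z}-1$, so that the asserted identity is $\Lambda_k=\frac{1}{k!}FG^k$. The heart of the argument is a pair of differential identities. A direct computation gives $G'(z)=\frac{1}{1-\sin z}$, and the algebraic identity $(G+1)^2=\frac{1+\sin z}{1-\sin z}=\frac{2}{1-\sin z}-1=2G'-1$ rewrites this as the Riccati equation
\begin{equation}\label{eq_G}
G'=\tfrac12 G^2+G+1.
\end{equation}
Logarithmic differentiation of $F$, together with $\frac{\cos z}{1-\sin z}=G+1$, gives
\begin{equation}\label{eq_F}
F'=\bigl(q+tG\bigr)F.
\end{equation}

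Finally I would substitute $\Lambda_k=\frac1{k!}FG^k$ into the left side of \eqref{eq_ode_sys}. Using \eqref{eq_F} for $F'$ and \eqref{eq_G} for $G'$,
\begin{align*}
\Lambda_k'&=\frac{1}{k!}\bigl(F'G^k+FkG^{k-1}G'\bigr)\\
&=\frac{1}{k!}F\Bigl[\tfrac{k+2t}{2}G^{k+1}+(k+q)G^k+kG^{k-1}\Bigr].
\end{align*}
This matches the right side of \eqref{eq_ode_sys} term-by-term, since $\Lambda_{k-1}=\frac{1}{k!}FkG^{k-1}$, $(k+q)\Lambda_k=\frac{1}{k!}F(k+q)G^k$, and $\frac12(k+1)(k+2t)\Lambda_{k+1}=\frac{1}{k!}F\tfrac{k+2t}{2}G^{k+1}$. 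At $z=0$ one has $F(0)=1$ and $G(0)=0$, so $\Lambda_k(0)=\delta_{k,0}$ as required, and the $k=0$ case reduces to $\Lambda_0=F$, recovering Theorem~\ref{deutsch} as a consistency check. The main obstacle is discovering the Riccati identity \eqref{eq_G} and its companion \eqref{eq_F}: once these are in hand the verification closes up formally, but finding them requires the trigonometric simplification $G'=(1-\sin z)^{-1}$ and its re-expression purely in terms of $G$.
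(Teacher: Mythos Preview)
Your proof is correct. Both arguments reduce to the same pair of differential identities---your Riccati equation \eqref{eq_G} for $G$ and the linear equation \eqref{eq_F} for $F$ are precisely the ODEs $F'=1+F+\tfrac12 F^2$ and $B'=qB+tBF$ that the paper extracts from Corollary~\ref{cor_F}---so at the computational level the two proofs are identical. The difference is in the framing: the paper quotes the general Sheffer-matrix machinery of Aigner to assert a priori that $\Lambda_k$ factors as $B(z)F(z)^k/k!$ with $B,F$ satisfying those ODEs, and then solves them; you instead derive the full ODE system~\eqref{eq_ode_sys} directly from the Catalan recurrence, plug in the claimed closed form, and close the loop with an explicit uniqueness argument from the initial data. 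Your route is more elementary and self-contained (no appeal to an outside structural theorem), at the cost of having to verify the $\Lambda_{k-1}$, $\Lambda_k$, $\Lambda_{k+1}$ terms by hand; the paper's route explains \emph{why} one should expect the product form $BF^k/k!$ in the first place.
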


For the proof we need some facts from the theory of \emph{Sheffer matrices}. We summarize what we need in the following. Readers are referred to Chapter 7 of \cite{Aigner_07} for details. 

\medskip
Let $(Q_n)$ be a sequence defined by $Q_0=1$ and $Q_n=q_1q_2\cdots q_n$ for some sequence $(q_n)$. We define a linear map $\Delta$ on the formal power series of $z$ such that $$\Delta(1)=0 \qquad \text{and} \qquad \Delta(z^n)=q_nz^{n-1}.$$ 
For a Catalan matrix $A^{\mathbf{s},\mathbf{t}}=(a_{n,k})$,  let 
$$A_k(z)=\sum_{n\geq 0}a_{n,k} \frac{z^n}{Q_n}$$ 
be the generating function of the $k$-th column $(k\ge 0)$
and denote $B(z):=A_0(z)$. The following result says that under certain conditions we can find a formal power series $F(z)$ such that 
$A_k(z)$ can be determined from $B(z)$ and $F(z)$.

\begin{lem}[\cite{Aigner_07}]\label{lem_F}
Under the above notations, there exists $F(z)$ such that $$A_k(z)=B(z)\frac{F(z)^k}{Q_k}$$ for $k\geq 1$ if and only if 
$$
\begin{cases}
B(z) = aB(z) + bB(z)F(z),\\
\Delta(B(z)F(z)^k) = q_kB(z)F(z)^{k-1} + s_kB(z)F(z)^k + u_{k+1}B(z)F(z)^{k+1}\quad (k\geq 1),
\end{cases}
$$
where $u_k=\frac{t_k}{q_k}$ for $k\geq 0$.
\end{lem}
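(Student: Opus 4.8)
The plan is to route both directions of the equivalence through a single \emph{master identity} for the column series, namely
$$\Delta(A_k(z)) = A_{k-1}(z) + s_k A_k(z) + t_{k+1}A_{k+1}(z) \qquad (k\geq 0),$$
with the convention $A_{-1}(z)=0$. First I would establish this identity directly from the definitions: applying $\Delta$ termwise to $A_k(z)=\sum_{n}a_{n,k}z^n/Q_n$ and using $q_n/Q_n=1/Q_{n-1}$ rewrites it as $\sum_{m\ge 0}a_{m+1,k}\,z^m/Q_m$, after which the defining recurrence $a_{m+1,k}=a_{m,k-1}+s_ka_{m,k}+t_{k+1}a_{m,k+1}$ splits the sum into $A_{k-1}+s_kA_k+t_{k+1}A_{k+1}$. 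This identity encodes the entire Catalan recurrence at the level of generating functions, and it is the only place the matrix recurrence enters; everything else is algebra in the formal power series ring.

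For the forward direction, I would assume $A_k(z)=B(z)F(z)^k/Q_k$ for $k\ge 1$ (the case $k=0$ being the definition $A_0=B$, consistent with the formula since $F^0/Q_0=1$) and substitute into the master identity. For $k=0$ this gives $\Delta(B)=s_0B+t_1\,(BF/q_1)=s_0B+u_1BF$, which is the first displayed equation read with $\Delta$ on the left and $a=s_0$, $b=u_1$. For $k\ge 1$, substituting and clearing the common factor $Q_k$ produces $\Delta(BF^k)=(Q_k/Q_{k-1})BF^{k-1}+s_kBF^k+t_{k+1}(Q_k/Q_{k+1})BF^{k+1}$; the bookkeeping $Q_k/Q_{k-1}=q_k$ and $t_{k+1}Q_k/Q_{k+1}=t_{k+1}/q_{k+1}=u_{k+1}$ then yields exactly the second displayed equation.

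For the converse, I would set $\tilde A_k(z):=B(z)F(z)^k/Q_k$, show these satisfy the same master identity, and then invoke uniqueness to force $\tilde A_k=A_k$. Dividing the two hypothesized equations by $Q_k$ and reusing the ratio identities $q_k/Q_k=1/Q_{k-1}$ and $u_{k+1}/Q_k=t_{k+1}/Q_{k+1}$ converts them into $\Delta(\tilde A_k)=\tilde A_{k-1}+s_k\tilde A_k+t_{k+1}\tilde A_{k+1}$ for all $k\ge 0$. Since $\tilde A_0=B=A_0$ and both families obey the master identity, I would finish by induction on $k$: the identity can be solved for the top index as $t_{k+1}X_{k+1}=\Delta(X_k)-X_{k-1}-s_kX_k$, and because $t_{k+1}\ne 0$ this determines $X_{k+1}$ uniquely from $X_{k-1},X_k$, so $\tilde A_k$ and $A_k$ coincide for every $k$.

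The main obstacle is not the algebra but getting the boundary and solvability bookkeeping exactly right: one must track the ratios $Q_k/Q_{k\pm 1}$ carefully so that $s_k$ is preserved untouched while the two off-diagonal coefficients convert cleanly into $q_k$ and $u_{k+1}$, and one must treat the $k=0$ equation separately, since that is where $a=s_0$, $b=u_1$ are identified and where the convention $A_{-1}=0$ is used. The decisive structural hypothesis is $t_{k+1}\ne 0$, which is precisely what makes the master identity solvable for the highest index and hence supplies the uniqueness that powers the converse.
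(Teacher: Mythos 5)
The paper gives no proof of this lemma at all: it is quoted verbatim from Aigner's book \cite{Aigner_07}, so there is no internal argument to compare against. Your proof is correct and is essentially the standard one behind Aigner's statement: the single computation $\Delta(A_k)=\sum_{m\ge 0}a_{m+1,k}z^m/Q_m$ (using $Q_n=q_nQ_{n-1}$) turns the matrix recurrence into the master identity $\Delta(A_k)=A_{k-1}+s_kA_k+t_{k+1}A_{k+1}$, the forward direction is the substitution you describe with the ratio bookkeeping $Q_k/Q_{k-1}=q_k$ and $t_{k+1}Q_k/Q_{k+1}=u_{k+1}$, and the converse follows from uniqueness, since $t_{k+1}\neq 0$ (a standing hypothesis of the paper) lets one solve $t_{k+1}X_{k+1}=\Delta(X_k)-X_{k-1}-s_kX_k$ upward from $X_{-1}=0$, $X_0=B$. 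Two judgment calls you made deserve explicit credit: the first displayed condition as printed, $B(z)=aB(z)+bB(z)F(z)$, is a typo for $\Delta(B(z))=aB(z)+bB(z)F(z)$ (otherwise it would force $a+bF=1$ and trivialize), and the undefined constants $a,b$ must be read as $a=s_0$, $b=u_1=t_1/q_1$; both readings are the only ones consistent with Corollary \ref{cor_F}, where $q_n=n$ makes $\Delta$ differentiation, $s_k=a+ku$ gives $s_0=a$, and $t_k=k(b+(k-1)v)$ gives $u_1=b$. The identification $a=s_0$, $b=u_1$ is genuinely needed in your converse (it is what supplies the $k=0$ instance of the master identity for the $\tilde A_k$), so flagging it rather than treating $a,b$ as free parameters is the right call. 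The only pedantic caveat, shared with Aigner's own treatment, is that dividing by $t_{k+1}$ presumes the coefficients live in a ring where the $t_k$ are invertible (e.g.\ a field of rational functions in the formal parameters), which is harmless in all of the paper's applications.
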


Let $q_n=n$ for $n\geq 0$ (hence $Q_n=n!$) we have the following.
\begin{cor}[\cite{Aigner_07}]\label{cor_F}
With the assumptions above, there exists $F(z)$ such that $$A_k(z)=\frac{B(z)F(z)^k}{k!}$$ for $k\geq 1$ if and only if 
the $k$-th term of the seed sequences have the form
$$s_k=a+ku \qquad  \text{and} \qquad k(b + (k-1)v)$$ for some $a,b,u,v$. 
Moreover, we have 
$$F'(z) = 1 + uF(z) + vF(z)^2 \qquad \text{and} \qquad B'(z) = aB(z) + bB(z)F(z).$$
\end{cor}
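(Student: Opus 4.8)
The plan is to obtain Corollary~\ref{cor_F} as the specialization $q_n = n$ of Lemma~\ref{lem_F}. Under this choice $Q_n = n!$, so the factorization $A_k(z) = B(z)F(z)^k/Q_k$ of the lemma becomes exactly $A_k(z) = B(z)F(z)^k/k!$, and the operator $\Delta$ reduces to ordinary differentiation, since $\Delta(z^n) = nz^{n-1}$ means $\Delta = d/dz$. First I would rewrite the two conditions of Lemma~\ref{lem_F} in this language. The first, which reads $\Delta(B) = aB + bBF$, becomes $B'(z) = aB(z) + bB(z)F(z)$, already the second displayed ODE of the statement. Using $u_{k+1} = t_{k+1}/q_{k+1} = t_{k+1}/(k+1)$, the second condition becomes
$$\bigl(B(z)F(z)^k\bigr)' = k\,B(z)F(z)^{k-1} + s_k\,B(z)F(z)^k + \frac{t_{k+1}}{k+1}\,B(z)F(z)^{k+1}$$
for all $k \geq 1$.

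Next I would expand the left-hand side by the product rule, $(BF^k)' = B'F^k + kBF^{k-1}F'$, and eliminate $B'$ via the first ODE. Dividing through by $B(z)F(z)^{k-1}$ (legitimate because $B(0)=1$ and $F(z) = z + \cdots$ are nonzero power series) collapses the identity to
$$k\,F'(z) = k + (s_k - a)F(z) + \Bigl(\frac{t_{k+1}}{k+1} - b\Bigr)F(z)^2.$$
This is the heart of the matter: the left-hand side is independent of $k$. The normalization $a_{k,k}=1$ forces $k!\,A_k = z^k + \cdots$, hence $F = z + \cdots$, so $F(0)=0$, $F'(0)=1$, and $F$ is nonconstant; consequently $1, F, F^2$ have distinct lowest-order terms and are linearly independent power series. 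By uniqueness of the representation of $F'$ in their span, the coefficients $(s_k - a)/k$ and $\bigl(t_{k+1}/(k+1) - b\bigr)/k$ cannot depend on $k$; writing them as constants $u$ and $v$ yields precisely $s_k = a + ku$ and $t_k = k\bigl(b + (k-1)v\bigr)$, and substituting back reduces the whole family to the single $k$-free equation $F'(z) = 1 + uF(z) + vF(z)^2$, the first displayed ODE.

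For the converse I would run the computation backwards: assuming $s_k = a + ku$ and $t_k = k(b+(k-1)v)$, I solve $F' = 1 + uF + vF^2$ with $F(0)=0$ and $B' = aB + bBF$ with $B(0)=1$ for unique power series $F, B$, then verify that $B F^k/k!$ satisfies the column recurrence of $A^{\mathbf{s},\mathbf{t}}$ with the correct initial terms; by uniqueness of the recurrence these functions coincide with the $A_k(z)$, exhibiting the required $F$. The main obstacle I anticipate is the coefficient-matching step: one must justify carefully that the $k$-independence of $F'$, together with the linear independence of $1, F, F^2$ (valid exactly because $F$ is a nonconstant series with $F(0)=0$), forces the affine-in-$k$ shape of the seeds, and one must treat the boundary index $k=1$ where the $t$-relation first enters so that no seed pattern escapes the classification.
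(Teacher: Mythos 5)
Your derivation is correct and takes essentially the same route as the paper, which states Corollary~\ref{cor_F} without proof as the specialization $q_n=n$ (so $Q_n=n!$ and $\Delta=d/dz$) of Lemma~\ref{lem_F}, citing Aigner; your computation --- expanding $(BF^k)'$, eliminating $B'$, dividing by $BF^{k-1}$, and using $F=z+\cdots$ so that $1,F,F^2$ are linearly independent to force $s_k=a+ku$ and $t_k=k(b+(k-1)v)$, plus the uniqueness argument for the converse --- is exactly the verification the paper delegates to the reference, with the boundary case $t_1=b$ handled correctly.
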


Now we can complete our proof of Theorem \ref{thm_gen_cyc}.\\

\noindent \textit{Proof of Theorem \ref{thm_gen_cyc}.} From Corollary~\ref{udcycle}, the seeds are $(s_k,t_k)=(k+q,k(\frac{1}{2}(k-1)+t))$ (here we let $k$ be the indices to fit the format of Corollary \ref{cor_F}). Now by Corollary \ref{cor_F} we have 
$$\Lambda_k(z)=B(z) \frac{F(z)^k}{k!},\qquad F'(z) = 1 + F(z) + \frac{1}{2}F(z)^2, \quad  \text{and}\quad B(z)' = qB(z) + tB(z)F(z).$$
Solve these differential equations we result in
$$F(z)=\frac{\cos z}{1-\sin z}-1 \qquad \text{and} \qquad  B(z)=\frac{e^{(q-t)z}}{(1-\sin z)^t}.$$
Hence $$\Lambda_k(z)=\frac{e^{(q-t)z}}{(1-\sin z)^t}\cdot  \frac{1}{k!} \cdot \left (\frac{\cos z}{1-\sin z}-1\right)^k,$$ as desired.
\hfill $\Box$

\subsection{Connected cycle-up-down partial permutations}
Along the line of Section 7, we consider connected cycle-up-down partial permutations and show that Theorem \ref{thm_par_cyc} can be extended to connected cycle-up-down partial permuatations in this section.
\medskip

Recall the partition $\mathfrak{U}_{n,k}=\bigcup\limits_{i=1}\limits^{5}\mathfrak{U}^{(i)}_{n,k}$ and the bijections $\phi,\psi,\varphi$ in the proof of Theorem \ref{thm_par_cyc}. For $\rho \in \mathfrak{P}_{n,k}$, let $\mathcal{C}(\rho)$ be the set of partial cycles of $\rho$. From the proof of Theorem \ref{thm_par_cyc} we can easily get the following lemma and proposition and the proofs are omitted.

\begin{lem}\label{lem_par_cyc_con}
The following holds:
\begin{enumerate}
\item[(1)] For $\rho\in\mathfrak{U}_{n-1,k-1}$, we have 
$$q^{\fix(\rho (nX))}t^{\cyc_{\geq 2}(\rho (nX))}=q^{\fix(\rho)}t^{\cyc_{\geq 2}(\rho)}.$$
\item[(2)] For $\rho\in\mathfrak{U}_{n-1,k}$, we have 
$$q^{\fix(\rho (n))}t^{\cyc_{\geq 2}(\rho (n))}=q\cdot q^{\fix(\rho)}t^{\cyc_{\geq 2}(\rho)}.$$
\item[(3)] For $\rho\in\mathfrak{U}_{n-1,k}$, we have 
$$\displaystyle\sum_{c\in \mathcal{C}(\rho)}q^{\fix(\phi(\rho,c))}t^{\cyc_{\geq 2}(\phi(\rho,c))}=k\cdot q^{\fix(\rho)}t^{\cyc_{\geq 2}(\rho)}.$$
\item[(4)] For $\rho\in\mathfrak{U}_{n-1,k+1}$, we have 
$$\displaystyle\sum_{c\in\mathcal{C}(\rho)}q^{\fix(\psi(\rho,c))}t^{\cyc_{\geq 2}(\psi(\rho,c))}=(k+1)\cdot t\cdot q^{\fix(\rho)}t^{\cyc_{\geq 2}(\rho)}.$$
\item[(5)] For $\rho\in\mathfrak{U}_{n-1,k+1}$, we have 
$$\displaystyle\sum_{\text{distinct }c_1,c_2\in\mathcal{C}(\rho)}q^{\fix(\varphi(\rho,c_1,c_2))}t^{\cyc_{\geq 2}(\varphi(\rho,c_1,c_2))}={k+1\choose 2}q^{\fix(\rho)}t^{\cyc_{\geq 2}(\rho)}.$$
\end{enumerate}
\end{lem}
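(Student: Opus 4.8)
The plan is to establish all five identities by tracking precisely how the two statistics $\fix$ and $\cyc_{\geq 2}$ change under the operations already used in the proof of Theorem~\ref{thm_par_cyc}, and then to read off the right-hand factors $1$, $q$, $k$, $(k+1)t$, $\binom{k+1}{2}$ as the number of structures summed over, multiplied by any weight increment. The decisive point, which makes each sum collapse, is that the weight of each image is \emph{independent} of the particular cycle or pair of cycles chosen; thus each sum reduces to a count times a single common weight.

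Parts (1) and (2) I would dispatch directly from the definitions. Adjoining the partial cycle $(n\mathsf{X})$ to $\rho$ creates neither a fixed point nor a full cycle, so both $\fix$ and $\cyc_{\geq 2}$ are unchanged; adjoining the singleton full cycle $(n)$ creates exactly one fixed point and no full cycle of length $\geq 2$, producing the factor $q$.

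For parts (3) and (5) I would invoke the fact that the bijections $\phi$ and $\varphi$ alter only the chosen cycle(s) of $\rho$ and leave every other cycle intact, and that in both cases the resulting cycle $\hat c$ is a \emph{partial} cycle of length at least $2$. Such a cycle contains no fixed point and is not a full cycle, so the modification changes neither $\fix$ nor $\cyc_{\geq 2}$; hence every summand equals $q^{\fix(\rho)}t^{\cyc_{\geq 2}(\rho)}$. The totals then follow by counting the objects: a $\rho\in\mathfrak{U}_{n-1,k}$ has exactly $k$ partial cycles, giving the factor $k$ in (3), while a $\rho\in\mathfrak{U}_{n-1,k+1}$ has $\binom{k+1}{2}$ unordered pairs of distinct partial cycles, giving the factor $\binom{k+1}{2}$ in (5); the constraint $\min(c_1)>\min(c_2)$ merely selects one representative per unordered pair and so does not affect the count.

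Part (4) is the only step needing genuine care, and I expect it to be the main (if mild) obstacle. Here the bijection $\psi$ erases the trailing $\mathsf{X}$, so the cycle it produces is a \emph{full} cycle of length at least $2$ rather than a partial one. I would check that this creates exactly one new full cycle of length $\geq 2$ and no new fixed point, so that each summand acquires precisely one factor of $t$ and equals $t\,q^{\fix(\rho)}t^{\cyc_{\geq 2}(\rho)}$; since a $\rho\in\mathfrak{U}_{n-1,k+1}$ offers $k+1$ partial cycles to convert, summing gives $(k+1)\,t\,q^{\fix(\rho)}t^{\cyc_{\geq 2}(\rho)}$. Because $\phi$, $\psi$, and $\varphi$ were already shown to be up-down bijections in Theorem~\ref{thm_par_cyc}, no re-verification of the zig-zag condition is required; the work here is purely the bookkeeping of fixed points and full cycles.
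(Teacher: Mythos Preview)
Your proposal is correct and is exactly what the paper intends: the authors omit the proof entirely, stating only that the lemma follows directly from the proof of Theorem~\ref{thm_par_cyc}. Your argument is precisely that extraction---tracking how $\fix$ and $\cyc_{\geq 2}$ change under each of the five operations and observing that the weight is independent of the chosen cycle(s)---so there is nothing to compare.
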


\begin{prop}\label{prop_par_cyc_con}
(i) For $\pi\in\mathfrak{P}_{n,0}$ and $c\in\mathcal{C}(\pi)$, $\pi(n+1\mathsf{X})$ and $\pi(n+1)$ are always not connected. 
(ii) For $n,k\geq 1$ and $\pi\in\mathfrak{P}_{n,k}$, we have
\begin{enumerate}
\item $\pi$ is connected if and only if $\pi(n+1\mathsf{X})$ is connected.
\item $\pi$ is connected if and only if $\pi(n+1)$ is connected.
\item Given $c\in \mathcal{C}(\pi)$, $\pi$ is connected if and only if $\phi(\pi,c)$ is connected.
\item Given $c\in \mathcal{C}(\pi)$, $\pi$ is connected if and only if $\psi(\pi,c)$ is connected.
\item Given distinct $c_1,c_2\in \mathcal{C}(\pi)$, $\pi$ is connected if and only if $\varphi(\pi,c_1,c_2)$ is connected.
\end{enumerate}
\end{prop}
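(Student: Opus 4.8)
The plan is to reduce connectedness to a statement purely about cycle structure. First I would record the characterization that a partial permutation $\sigma$ of order $m$ is connected if and only if there is no index $1\le i<m$ for which $\{1,2,\dots,i\}$ is a union of full cycles of $\sigma$. This is merely a restatement of the defining condition: the principal $i\times i$ submatrix is a permutation matrix exactly when $\sigma$ restricts to a bijection of $\{1,\dots,i\}$ onto itself, and this forces every element of $\{1,\dots,i\}$ to lie in a full cycle entirely contained in $\{1,\dots,i\}$. In particular, as soon as a prefix $\{1,\dots,i\}$ meets a partial cycle, tracing that element forward and backward under $\sigma$ must either leave $\{1,\dots,i\}$, hit the empty column at the start of the partial cycle, or hit the terminal $\mathsf{X}$, so the prefix cannot be a union of full cycles. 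I will call an index $i$ with the permutation-matrix property a \emph{cut point}.

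With this language part (i) is immediate: when $k=0$ the partial permutation $\pi\in\mathfrak{P}_{n,0}$ is an ordinary permutation, so $\{1,\dots,n\}$ is a union of full cycles; appending either $(n+1\mathsf{X})$ or $(n+1)$ produces an object of order $n+1$ for which $i=n$ is a cut point, whence neither is connected.

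For part (ii) I would prove a single unifying \emph{key lemma}: if $\sigma\in\mathfrak{P}_{n+1,\cdot}$ is obtained from $\pi\in\mathfrak{P}_{n,k}$ by any one of the five operations $\pi\mapsto\pi(n+1\mathsf{X})$, $\pi\mapsto\pi(n+1)$, $\phi(\pi,c)$, $\psi(\pi,c)$, $\varphi(\pi,c_1,c_2)$, then for every $1\le i\le n$ the set $\{1,\dots,i\}$ is a union of full cycles of $\sigma$ if and only if it is a union of full cycles of $\pi$. The point is that in each construction the new largest symbol $n+1$ is inserted into a single cycle, and the only cycle of $\sigma$ that is not literally a cycle of $\pi$ is the one containing $n+1$; every other cycle, together with its full-or-partial type, is left untouched. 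Moreover the local complements used inside $\phi$, $\psi$, $\varphi$ only permute the symbols of that one modified cycle, so they preserve its underlying set and therefore alter neither the collection of full cycles lying inside $\{1,\dots,n\}$ nor the set of partial-cycle symbols that are $\le n$. Since any cycle of $\sigma$ containing $n+1$ can never sit inside $\{1,\dots,i\}$ for $i\le n$, the full cycles available to fill a prefix $\{1,\dots,i\}$ are exactly the full cycles of $\pi$, and the lemma follows by comparing the two conditions directly.

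Granting the key lemma, the conclusion is routine: the lemma matches the cut points $i\le n-1$ of $\sigma$ with those of $\pi$, and the extra candidate $i=n$ is never a cut point of $\sigma$, since by the lemma this would require $\{1,\dots,n\}$ to be a union of full cycles of $\pi$, which is impossible because $\pi$ has at least one partial cycle when $k\ge 1$ (and two when $k\ge 2$, as the hypothesis for $\varphi$ provides). Hence $\sigma$ is connected precisely when $\pi$ is, which is exactly the content of (1)--(5). I expect the only delicate case to be $\psi(\pi,c)$, where the partial cycle $c$ is \emph{completed} into a full cycle containing $n+1$: here the symbols of $c$ switch from partial type to full type, so one must invoke that this completed cycle still reaches out to $n+1$ and is therefore broken by any prefix $\{1,\dots,i\}$ with $i\le n$; that observation is what keeps the ``only if'' direction of the key lemma intact in this case. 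The remaining four operations keep $n+1$ in a partial cycle or in a new full cycle disjoint from $\{1,\dots,n\}$, and are comparatively transparent.
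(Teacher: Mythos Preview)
Your argument is correct. The paper itself omits the proof entirely, stating only that ``from the proof of Theorem~\ref{thm_par_cyc} we can easily get the following lemma and proposition and the proofs are omitted,'' so there is nothing to compare against at the level of detail; your cycle-structure characterization of cut points (a prefix $\{1,\dots,i\}$ is a cut exactly when it is a union of full cycles) is precisely the clean formulation that makes the omitted verification routine, and your key lemma correctly isolates the one nontrivial point, namely that in $\psi(\pi,c)$ the symbols of $c$ become part of a full cycle but that cycle still contains $n+1$ and hence cannot fit inside any prefix $\{1,\dots,i\}$ with $i\le n$.
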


Finally we can obtain the following.  
\begin{thm}\label{thm_par_cyc_con}
Let the seeds be
$$(s_\ell,t_\ell)=(\ell+1+q,\frac{1}{2}(\ell+1)(\ell+2t)).$$
Then $a_{n,k}$ is the generating function of connected cycle-up-down partial permutations with respect to $\fix$ and $\cyc_{\geq2}$. Namely, 
$$a_{n,k}=\sum_{\pi\in\mathfrak{U}_{n,k}\cap\mathfrak{C}_{n,k}}q^{\fix(\pi)}t^{\cyc_{\geq2}(\pi)}.$$
\end{thm}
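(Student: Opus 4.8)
The plan is to re-run the recurrence argument of Theorem~\ref{thm_par_cyc}, now intersected with connectivity, feeding the statistic weights from Lemma~\ref{lem_par_cyc_con} and the connectivity behaviour from Proposition~\ref{prop_par_cyc_con} into the computation. I set
$$b_{n,k}:=\sum_{\pi\in\mathfrak{U}_{n,k}\cap\mathfrak{C}_{n,k}}q^{\fix(\pi)}t^{\cyc_{\ge 2}(\pi)}$$
and recall the partition $\mathfrak{U}_{n,k}=\bigcup_{i=1}^{5}\mathfrak{U}^{(i)}_{n,k}$ by the cycle $\hat c$ containing $n$, where each $\pi\in\mathfrak{U}^{(i)}_{n,k}$ arises from a unique smaller $\rho$ via one of the steps $\rho\mapsto\rho(n\mathsf{X})$, $\rho\mapsto\rho(n)$, $\phi(\rho,c)$, $\psi(\rho,c)$, $\varphi(\rho,c_1,c_2)$, with $\rho$ lying in $\mathfrak{U}_{n-1,k-1}$, $\mathfrak{U}_{n-1,k}$, $\mathfrak{U}_{n-1,k}$, $\mathfrak{U}_{n-1,k+1}$, $\mathfrak{U}_{n-1,k+1}$ for $i=1,\dots,5$ respectively.

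First I would assemble the interior recurrence. Grouping the five cases by their effect on the number of partial cycles and reading off the weights from Lemma~\ref{lem_par_cyc_con}, case~$1$ carries weight $1$, cases~$2$ and~$3$ together carry $q+k$, and cases~$4$ and~$5$ together carry $(k+1)t+\binom{k+1}{2}=\tfrac12(k+1)(k+2t)$ — exactly the seeds of Theorem~\ref{thm_par_cyc}. Proposition~\ref{prop_par_cyc_con}(ii) guarantees that each of the five steps preserves connectivity \emph{verbatim} (the output is connected iff $\rho$ is) as long as $\rho$ still possesses a partial cycle, i.e.\ the relevant source index is $\ge 1$. Hence for $k\ge 2$ all five cases restrict cleanly to their connected parts and
$$b_{n,k}=b_{n-1,k-1}+(q+k)\,b_{n-1,k}+\tfrac12(k+1)(k+2t)\,b_{n-1,k+1}.$$

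The crux is the boundary, governed by Proposition~\ref{prop_par_cyc_con}(i): when $\rho$ is an ordinary permutation both $\rho(n\mathsf{X})$ and $\rho(n)$ fail to be connected, so case~$1$ dies at $k=1$ and cases~$1,2$ die at $k=0$, while cases~$3,5$ are unavailable there for want of a partial cycle (respectively two). The surviving relations are
$$b_{n,0}=t\,b_{n-1,1}\qquad\text{and}\qquad b_{n,1}=(q+1)\,b_{n-1,1}+(2t+1)\,b_{n-1,2}.$$
Writing $a^{*}_{n,k}:=b_{n+1,k+1}$, so that the \emph{absence} of a $b_{n-1,0}$ term in $b_{n,1}$ is precisely the boundary convention $a^{*}_{n,-1}=0$, the interior relation becomes, for all $k\ge 0$,
$$a^{*}_{n,k}=a^{*}_{n-1,k-1}+(k+1+q)\,a^{*}_{n-1,k}+\tfrac12(k+2)(k+1+2t)\,a^{*}_{n-1,k+1},$$
which is the Catalan recurrence for the stated seeds $(s_\ell,t_\ell)=(\ell+1+q,\tfrac12(\ell+1)(\ell+2t))$; with $a^{*}_{0,0}=b_{1,1}=1=a_{0,0}$ this identifies $a_{n,k}$ with the connected cycle-up-down enumerator (in the same index-shifted sense $a_{n,k}=C_{n+1,k+1}$ as Theorem~\ref{thm_connected}, the leftover relation $b_{n,0}=t\,b_{n-1,1}$ being the analogue of $C_{n,0}=t_1C_{n-1,1}$).

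The main obstacle I expect is this boundary bookkeeping rather than the bulk sum: one must verify that it is exactly the collapse of cases~$1$ and~$2$ over $\mathfrak{U}_{\cdot,0}$ (Proposition~\ref{prop_par_cyc_con}(i)) — and nothing else — that deletes the lowest terms and thereby advances the seeds of Theorem~\ref{thm_par_cyc} by one index, sending $\ell+q\mapsto\ell+1+q$ and $\tfrac12\ell(\ell-1+2t)\mapsto\tfrac12(\ell+1)(\ell+2t)$. Once the $k=0,1$ rows are pinned down, the interior coefficients are a direct transcription of Lemma~\ref{lem_par_cyc_con}.
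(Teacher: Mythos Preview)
Your argument is correct and is precisely the approach the paper intends: combine Lemma~\ref{lem_par_cyc_con} with Proposition~\ref{prop_par_cyc_con} to show that the connected enumerator satisfies the shifted Catalan recurrence. The paper's own proof is the two-sentence version of what you wrote; you have supplied the boundary analysis at $k=0,1$ and the index shift $a_{n,k}=b_{n+1,k+1}$ that the paper leaves implicit (and which, as you note, matches the convention of Theorem~\ref{thm_connected}).
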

\begin{proof}
Combining Lemma \ref{lem_par_cyc_con} and Proposition \ref{prop_par_cyc_con} we know that $\sum_{\pi\in\mathfrak{U}_{n,k}\cap\mathfrak{C}_{n,k}}a^{\fix(\pi)}b^{\cyc_{\geq2}(\pi)}$ satisfies the same recurrence as $a_{n,k}$ does. Hence the result follows.
\end{proof}

\section{Concluding remarks}
 In this paper we extend classic statistics (inversion, descent, excedance, weak excedance, right-to-left minimum, cycle, and fixed point) of a permutation to a partial permutation and prove that the generating functions with respect to these statistics can be encoded in the seed sequences of the Catalan matrix. Moreover, it turns out that many classic results on permutation statistics can be carried to partial permutations.  

Unfortunately, we are not able to find a suitable major index $\mathsf{maj}$ of a partial permutation.  A natural checkpoint is its equidistribution with the $\mathsf{inv}$ defined in this paper.

\begin{prob} Find a suitable major index over partial permutations.
\end{prob}

The ordinary cycle-up-down permutations are counted by Euler numbers, which also count many important permutation families, among them the alternating permutations, Andr\'e permutations of both types, and simsun permutations. It will be interesting to have partial versions of these families.

More generally, we may take a combinatorial structure whose counting sequence can be generated as the leftmost column of a Catalan matrix. As shown in this paper, it sounds promising that we might have a definition of the `partial structure' and the enumerators with respect to certain statistics could be encoded in the seeds. We hope that this is a new approach for studying combinatorial statistics. More examples will be given in a forthcoming article.



\begin{thebibliography}{20}
\bibitem{Aigner_07} M. Aigner. A Course in Enumeration. Vol. 238. Springer Science \& Business Media, 2007.

\bibitem{Bona_12} M. B\'ona. Combinatorics of Permutations. CRC Press, 2012.

\bibitem{Borg_10} P. Borg. Cross-intersecting families of partial permutations. SIAM Journal on Discrete Mathematics 24.2 (2010), 600-608.

\bibitem{Corteel_Kim_Stanton_16} S. Corteel, J. Kim, and D. Stanton. Moments of orthogonal polynomials and combinatorics. Recent Trends in Combinatorics. Springer, Cham, 2016, 545-578.

\bibitem{Claesson_11} A. Claesson, V. Jel\'inek, E. Jel\'inkov\'a and S. Kitaev. Pattern avoidance in partial permutations. The electronic journal of combinatorics (2011), P25-P25.

\bibitem{Csikvari_16} P. Csikv\'ari. Co-adjoint polynomial, arXiv:1603.0259.

\bibitem{Deutsch_Elizalde_11} E. Deutsch and S. Elizade. Cycle up-down permutations. Australas. J. Combin 50 (2011), 187-199.

\bibitem{Derycke_18} H. Derycke. Permutation statistics for a percolation model on $\mathbb{Z}^2$ with imposed symmetries. GASCom 2018, 140-147.

\bibitem{Disanto_14} F. Disanto. Andr\'e permutations, right-to-left and left-to-right minima, S\'eminaire Lotharingien de Combinatoire, 70 (2014), Article B70f.

\bibitem{East_07} J. East. Braids and partial permutations. Advances in Mathematics 213.1 (2007), 440-461.

\bibitem{Eu_Lo_Wong_15}  S.-P. Eu, Y.-H. Lo, and T.-L. Wong. The sorting index on colored permutations and even-signed permutations. Advances in Applied Mathematics 68 (2015), 18-50.

\bibitem{Foata_Han_09} D. Foata and G.-N. Han. New permutation coding and equidistribution of set-valued statistics. 
Theoret. Comput. Sci, 410 (2009), 3743-3750. 

\bibitem{Ivanov_01} V. N. Ivanov and S. V. Kerov. The algebra of conjugacy classes in symmetric groups and partial permutations. Journal of Mathematical Sciences 107.5 (2001), 4212-4230.

\bibitem{Ku_Leader_06} C.Y. Ku, and I. Leader. An Erd\"os–Ko–Rado theorem for partial permutations. Discrete mathematics 306.1 (2006), 74-86.

\bibitem{Lin_Kim_18} Z. Lin, and D. Kim. A sextuple equidistribution arising in Pattern Avoidance. Journal of Combinatorial Theory, Series A 155 (2018), 267-286.

\bibitem{Mao_Zeng_21} J. Mao and J. Zeng. New equidistribution of set-valued statistics on permutations. 
Discrete Math, 344 (2021), no. 6, 112337.

\bibitem{Sadjang_Koepf_Foupouagnigni_15} P. N. Sadjang, W. A. Koepf and M. Foupouagnigni. On moments of classical orthogonal polynomials. J. Math. Anal. Appl, 424 (2015), no.1, 122151.

\bibitem{Poznanovic_14} S. Poznanovi\'c. The sorting index and equidistribution of set-valued statistics over restricted permutations. Journal of Combinatorial Theory, Series A 125 (2014), 254-272.

\bibitem{Stanley_05} R.P. Stanley. The descent set and connectivity set of a permutation.
J. Integer Seq, 8 (2005), 05.3.8.


\bibitem{Stanley_EC1} R.P. Stanley, Enumerative Combinatorics Volume 1 second edition.  Cambridge studies in advanced mathematics (2011).

\end{thebibliography}
\end{document}